\xpatchcmd{\algorithmic}{\setcounter}{\algorithmicfont\setcounter}{}{}
\providecommand{\algorithmicfont}{}
\providecommand{\setalgorithmicfont}[1]{\renewcommand{\algorithmicfont}{#1}}
\newcommand{\f}{\frac}
\newcommand{\bs}{\boldsymbol}
\newcommand{\wt}{\widetilde}
\newcommand{\vc}{\gamma}
\newcommand{\G}{\mathcal{G}}
\newcommand{\Smc}{\mathcal{S}}
\newcommand{\Proj}{P}
\newcommand{\R}{R}
\newtheorem{definition}{Definition}
\newtheorem{lemma}{Lemma}
\newtheorem{remark}{Remark}
\newtheorem{theorem}{Theorem}
\newcommand{\revi}[1]{{\color{black}#1}}
\newcommand{\revii}[1]{{\color{black}#1}}
\title{A high order finite difference method for the elastic wave equation in bounded domains with nonconforming interfaces}
\begin{document}
\author{Lu Zhang\thanks{Department of Applied Physics and Applied Mathematics, Columbia University, New York, NY 10027, USA. mail: lz2784@columbia.edu}\and Siyang Wang\thanks{Corresponding author. Department of Mathematics and Mathematical Statistics, Ume{\aa} University, 901 87 Ume\aa,  
    Sweden. Email: siyang.wang@umu.se}}
\maketitle

\begin{abstract}
We develop a stable finite difference method for the elastic wave equation in bounded media, where the material properties can be discontinuous at curved interfaces. The governing equation is discretized in second order form by a fourth or sixth order accurate summation-by-parts operator. The mesh size is determined by the velocity structure of the material, resulting in nonconforming grid interfaces with hanging nodes. We use order-preserving interpolation and the ghost point technique to couple adjacent mesh blocks in an energy-conserving manner, which is supported by a fully discrete stability analysis. \revi{In our previous work for the wave equation, two pairs of order-preserving interpolation operators are needed when imposing the interface conditions weakly by a penalty technique. Here, we only use one pair in the ghost point method.} In numerical experiments, we demonstrate that the convergence rate is optimal, and is the same as when a globally uniform mesh is used in a single domain. In addition, with a predictor-corrector time integration method, we obtain time stepping stability with stepsize almost the same as given by the usual Courant–Friedrichs–Lewy condition.
\end{abstract}

\textbf{Keywords}\\
Elastic wave equations,  Finite difference methods, Summation-by-parts, Nonconforming interfaces, Ghost points, Order-preserving interpolation

\textbf{AMS}\\
  65M06, 65M12

\section{Introduction}

Based on the classical dispersion analysis by Kreiss and Oliger \cite{Kreiss1972}, high order numerical methods solve hyperbolic partial differential equations (PDEs) more efficiently than low order methods for sufficiently smooth problems.  In the finite difference framework, Kreiss and Scherer introduced the concept of summation-by-parts (SBP) \cite{Kreiss1974}, where the difference operators mimic the integration by parts principle through an associated discrete inner product. Since then, SBP operators have extensively been used to derive stable and high order finite difference methods to solve hyperbolic and parabolic PDEs.

In an SBP finite difference method, boundary and interface conditions can be imposed weakly or strongly. The former approach uses the simultaneous approximation term method \cite{Carpenter1994} and is often termed as SBP-SAT. The method uses a penalty technique that is similar to numerical fluxes in a discontinuous Galerkin method \cite{Gassner2013}. For equations with second derivatives in space, boundary and interface conditions can also be imposed strongly by using the ghost point (GP) technique. The SBP-GP method is based on the SBP operator by Sjögreen and Petersson \cite{sjogreen2012fourth} for the second derivative with variable coefficient. This is also the approach used in this paper. 

For a given wave frequency, the wavelength is proportional to the wave speed, which is determined by the material property. In a heterogeneous medium, a smaller wave speed in a subset of the physical domain yields a shorter wavelength localized in the same subset. For computational efficiency, the number of grid points per wavelength shall be kept constant in the entire physical domain \cite{Hagstrom2012,Kreiss1972}. As a consequence, locally refined meshes with nonconforming grid interfaces are present in the discretization. In this case, SBP operators can be used to discretize the equations in each mesh block separated by these interfaces, and solutions in adjacent mesh blocks are coupled together via physical interface conditions. In the coupling procedure, solutions across nonconforming block interfaces are interpolated, and the interpolation must preserve the SBP property of the overall scheme. When the boundary conditions in the spatial direction parallel to the interface are periodic or far-field, standard interpolation based on centered stencils can be used. In bounded domains, however, interpolation stencils must be modified near the edges. A variety of interpolation methods have been proposed, for example, the interpolation operators by Mattsson and Carpenter \cite{Mattsson2010}, the order-preserving (OP) interpolation \cite{Almquist2019}, the projection technique \cite{Kozdon2016}, and the general coupling procedure for advection-diffusion problems \cite{Lundquist2018}.

For the wave equation in second order form, using the interpolation operators by Mattsson and Carpenter reduces the global convergence rate by one order, as compared to the case with conforming grid interfaces \cite{Wang2016}. The accuracy reduction is due to the large interpolation errors near the edges of the nonconforming grid interface.  Motivated by this, OP interpolation operators were constructed in \cite{Almquist2019}. With the model problem of the wave equation on Cartesian meshes with nonconforming grid interfaces, it was observed that the convergence rate was improved to the same order as for the conforming case.  

In this paper, we derive an SBP finite difference method for the elastic wave equations in second order form on curvilinear meshes, with a focus on the numerical coupling at nonconforming grid interfaces. Following \cite{Zhang2021}, we combine SBP operators with ghost points and without ghost points for the spatial discretization. Only SBP operators with the fourth order accurate interior stencil are used in \cite{Zhang2021}. In this work, we will also derive a spatial discretization based on SBP operators with the sixth order accurate interior stencil. For the SBP operators without ghost points, we use the ones constructed by Mattsson \cite{Mattsson2012}. For the SBP operators with ghost points, we use the fourth order version by Sjögreen and Petersson \cite{sjogreen2012fourth}. 
Since the SBP operator with ghost points based on the sixth order interior stencil has not been published, we use the technique from \cite{wang2018fourth} and convert Mattsson's operator to a new  SBP operator with ghost points. At the nonconforming interface, we use interpolation to couple solutions in different mesh blocks. In the SBP-SAT method for the wave equation, two pairs of the OP interpolation operators are used to weakly impose the grid interface conditions. \revii{Here in the SBP-GP method, only one pair of interpolation operators can be used. By a careful analysis of the truncation error, we show which pair shall be used for an optimal convergence rate.} The ghost point values at a nonconforming grid interface are computed by solving a linear system. However, the overall scheme is very efficient because of the favorable time step restriction, \revii{which is supported by a fully discrete stability analysis}.  

In an SBP-SAT discretization of the elastic wave equation, the standard second derivative SBP operator is modified at the boundary to satisfy a compatibility condition with the first derivative SBP operator, usually termed as ``fully compatible''. This procedure simplifies stability analysis but lowers the accuracy of the SBP operator by one order at the boundary and the overall convergence rate by half an order \cite{Almquist2021}. In the SBP-GP discretization, the property ``fully compatible'' is not needed in the stability analysis. In our numerical experiments, we have observed that the convergence rate with nonconforming interfaces on curvilinear meshes is the same as when a globally uniform mesh is used. 

The paper is organized as follows. In Sec.~2, we describe the elastic wave equations on curvilinear meshes. The SBP properties, as well as the OP interpolation operators, are presented in Sec.~3. In Sec.~4, we construct the numerical method and discuss the stability and accuracy properties. In Sec.~5, the predictor-corrector time stepping method is presented. We derive a fully discrete stability analysis and prove energy conservation. Numerical experiments in Sec.~6 verify the accuracy and robustness of the developed method. Finally, we conclude in Sec.~7.

\section{The elastic wave equation}

We consider the time dependent elastic wave equation in a two dimensional bounded domain ${\bf x} \in \Omega$, where ${\bf x} = (x, y)^T$ are the Cartesian coordinates with $x$ denoting the horizontal direction and $y$ denoting the vertical direction.  
The domain $\Omega$ is partitioned into two subdomains $\Omega^f$ and $\Omega^c$, with an interface $\Gamma = \Omega^f {\color{red}\cap} \Omega^c$. Both $\Omega^f$ and $\Omega^c$ have four possibly curved boundaries, i.e. $\Gamma$ may be curved. Without loss of generality, we assume that the wave speed is faster in $\Omega^c$ than in $\Omega^f$. The wavelength is proportional to the wave speed for a given frequency. To keep the number of grid points per wavelength constant, in the discretization the mesh size shall be larger in $\Omega^c$ than in $\Omega^f$, where the superscripts $c$ and $f$ indicating a coarse and fine mesh, respectively.

Our focus is the numerical treatment at the interface on curvilinear grids. To this end, we  introduce the  parameter coordinates ${\bf r} = (r, s)^T$ and smooth one-to-one mappings
\begin{equation}\label{mapping}
{\bf x} = {\bf X}^f({\bf r}) : [0,1]^2 \rightarrow \Omega^f\subset \mathbb{R}^2\ \ \mbox{and}\ \ {\bf x} = {\bf X}^c({\bf r}) : [0,1]^2 \rightarrow \Omega^c\subset \mathbb{R}^2.
\end{equation}
We further assume that the subdomain $\Omega^f$ is located at the top of the subdomain $\Omega^c$. Thus, the interface $\Gamma$ in the parameter coordinate corresponding to $s = 1$ for the coarse subdomain and $s = 0$ for the fine subdomain. 

In $\Omega^c$, the elastic wave equation can be written in curvilinear coordinates as
\begin{align}
\rho^c\frac{\partial^2{\bf C}}{\partial^2 t} = \frac{1}{J^c}\left[\frac{\partial}{\partial r}\left(N_{11}^c\frac{\partial {\bf C}}{\partial r} \right) + \frac{\partial}{\partial r}\left(N_{12}^c\frac{\partial {\bf C}}{\partial s} \right) + \frac{\partial}{\partial s}\left(N_{21}^c\frac{\partial {\bf C}}{\partial r} \right)  + \frac{\partial}{\partial s}\left(N_{22}^c\frac{\partial {\bf C}}{\partial s} \right) \right], \label{elastic_curvi}
\end{align}
where ${\bf C}({\bf r}, t)$ is the displacement vector, $\rho^c$ is the density function. The metric derivatives are  
\begin{equation}\label{metric_definition}
\xi_{11}^c = \frac{\partial r}{\partial x}, \ \ \xi_{12}^c = \frac{\partial s}{\partial x}, \ \ \xi_{21}^c = \frac{\partial r}{\partial y}, \ \ \xi_{22}^c = \frac{\partial s}{\partial y},\ \ (x,y)\in\Omega^c,
\end{equation}
and the Jacobian of coordinate transformation is 
\[J^c = \frac{\partial x}{\partial r}\frac{\partial y}{\partial s} - \frac{\partial x}{\partial s}\frac{\partial y}{\partial r}\in (0,\infty),\ \ (x,y)\in\Omega^c.\] 
The material properties in the curvilinear coordinates are described by
\begin{equation}\label{N_definition}
N_{kj}^c = J^c\sum_{i,l = 1}^2 \xi_{ij}^c M_{il}^c \xi_{lk}^c, \quad k,j=1,2,
\end{equation}
where $M_{il}^c$ describes the material properties on Cartesian grids. \revii{The elastic wave equation on Cartesian grids takes the same form as \eqref{elastic_curvi} with $N_{il}^c$ replaced by $M_{il}^c$, see \cite{duru2014stable} for a detailed coordinate transformation.} In general, $M_{il}^c$ are full matrices satisfying \revi{$M_{il}^c=\big(M_{li}^c\big)^T$}. In addition, $M_{11}$ and $M_{22}$ are symmetric positive definite. In the special but important case of isotropic media, we have
\begin{equation}\label{M_definition}
M_{11}^c = \left(\begin{array}{cc}
2\mu^c+\lambda^c & 0 \\
0 & \mu^c \end{array}\right),\ \ \  M_{12}^c = \left(\begin{array}{cc}
0 & \lambda^c\\
\mu^c & 0 \end{array}\right), \ \ \ \revi{M_{21}^c = (M_{12}^c)^T},\ \ \ M_{22}^c = \left(\begin{array}{ccc}
\mu^c & 0 \\
0 & 2\mu^c+\lambda^c\end{array}\right),
\end{equation}
 where $\lambda^c$ and $\mu^c$ are the first and second Lam{\'{e}} parameters, respectively. 
 
We remark that
\begin{itemize}
\item the matrices $N_{kj}^c$ defined in (\ref{N_definition}) are full matrices even in isotropic media. Hence, wave propagation in isotropic media has anisotropic properties in curvilinear coordinates;
\item the matrices $N_{kk}^c$ are symmetric positive definite and $N_{kj}^c=\big(N_{jk}^c\big)^T$, $k,j=1,2$;
\item the inverse mapping of (\ref{mapping}) is 
computed by the metric relations 
\[J^c \frac{\partial r}{\partial x} = \frac{\partial y}{\partial s}, \ \ J^c \frac{\partial r}{\partial y} = -\frac{\partial x}{\partial s},\ \ J^c \frac{\partial s}{\partial x} = -\frac{\partial y}{\partial r},\  \ J^c \frac{\partial s}{\partial y} = \frac{\partial x}{\partial r},\ \ (x,y)\in\Omega^c;\]
\item the unit outward normal for the interface $\Gamma$ of the subdomain $\Omega^c$ is given by 
\begin{equation}\label{outward_normal}
{\bf n}^{c} = +\left(\frac{\partial s}{\partial x}, \frac{\partial s}{\partial y}\right) \bigg/{\sqrt{\left(\frac{\partial s}{\partial x}\right)^2 + \left(\frac{\partial s}{\partial y}\right)^2}},\ (x,y)\in\Omega^c, 
\end{equation}
where $'+'$ corresponds to $s = 1$.
 \end{itemize}

The elastic wave equation in curvilinear coordinates for the fine subdomain in terms of the displacement vector ${\bf F} = {\bf F}({\bf r}, t)$ can be written as
\begin{align}\label{elastic_curvi_f}
\rho^f \frac{\partial^2{\bf F}}{\partial^2 t} = \frac{1}{J^f}\left[\frac{\partial}{\partial r}\left(N_{11}^f\frac{\partial {\bf F}}{\partial r} \right) + \frac{\partial}{\partial r}\left(N_{12}^f\frac{\partial {\bf F}}{\partial s} \right) + \frac{\partial}{\partial s}\left(N_{21}^f\frac{\partial {\bf F}}{\partial r} \right)  + \frac{\partial}{\partial s}\left(N_{22}^f\frac{\partial {\bf F}}{\partial s} \right) \right],
\end{align}
where the material properties are defined analogously as in the coarse domain. Note that the unit outward normal ${\bf n}^f$ for the interface $\Gamma$ of the subdomain $\Omega^f$ corresponding to $s = 0$ shall be changed to $'-'$ compared with $'+'$ in ${\bf n}^c$.

At the material interface $\Gamma$, we impose suitable interface conditions such that the traction vectors and displacement vectors are continuous,
\begin{equation}\label{interface_cond}
\frac{1}{J^c\Lambda^c}\left({N_{21}^c\frac{\partial {\bf C}}{\partial r} + N_{22}^c\frac{\partial {\bf C}}{\partial s} }\right) = \frac{1}{J^f\Lambda^f}\left({N_{21}^f\frac{\partial {\bf F}}{\partial r} + N_{22}^f\frac{\partial {\bf F}}{\partial s} }\right), \quad {\bf C} = {\bf F},
\end{equation}
where
\begin{equation}\label{lambda_cf}
\Lambda^c = {\sqrt{\left(\frac{\partial s}{\partial x}\right)^2 + \left(\frac{\partial s}{\partial y}\right)^2}}, \ (x,y)\in\Omega^c,\ \ \ \ \Lambda^f = {\sqrt{\left(\frac{\partial s}{\partial x}\right)^2 + \left(\frac{\partial s}{\partial y}\right)^2}}, \ (x,y)\in\Omega^f. 
\end{equation}
Together with suitable physical boundary conditions, such as   Dirichlet boundary conditions or traction free boundary conditions, the problem (\ref{elastic_curvi},\ref{elastic_curvi_f},\ref{interface_cond}) is well-posed \cite{duru2014stable, petersson2015wave}.

\section{Approximation in space}\label{SBP_operators}
In this section, we present the SBP properties for the first and second derivatives, as well as interpolation and restriction operators that will be used for the interface coupling.


\subsection{SBP property}\label{sec_sbp}
Consider a discretization of a bounded domain $x\in [x_l, x_r]$ by $n$ equidistant grid points,
\[
\bs{\wt x} = [x_0, x_1, \cdots, x_n, x_{n+1}],\quad  x_j=x_l+(j-1)h,
\]
 for $j=0,1,\cdots, n, n+1$ and  the grid spacing $h=(x_r-x_l)/(n-1)$. 
 The grid points $x_1$ and $x_n$ coincide with the endpoints $x_l$ and $x_r$, respectively. We also have two ghost points, $x_0$ and $x_{n+1}$, outside the domain. The grid without ghost points is denoted by $\bs{x}=[x_1, \cdots, x_n]$.

We define a discrete inner product 
\begin{equation}\label{iph}
(\bs{u}, \bs{v} )_{hw1} :=h\bs{u}^T W^h \bs{v}=h\sum_{j=1}^n \omega_j u_j v_j,
\end{equation}
where $\bs{u}$ and $\bs{v}$ are grid functions on $\bs{x}$. The positive weights $\omega_j\geq\delta>0$ for some constant $\delta$. In the special case with all weights equal to one, \eqref{iph} reduces to the standard \revi{$l^2$ inner product}.

The SBP property was first introduced in \cite{Kreiss1974} by Kreiss and Scherer, which is restated below. 
\begin{definition}
The difference operator $D$ is a first derivative SBP operator if it satisfies
\begin{equation}\label{sbp_dx}
(\bs{u}, D\bs{v} )_{hw1} = -(D\bs{u}, \bs{v})_{hw1} -u_1v_1 + u_nv_n
\end{equation}
for all grid functions $\bs{u}$ and $\bs{v}$.
\end{definition}

Kreiss and Scherer also constructed a set of first derivative SBP operators. On grid points away from boundaries, the SBP operator $D$ has standard central finite difference stencils with truncation error $\mathcal{O}(h^{2q})$, $q=1,2,3,4$ and the corresponding weights in the scalar product equal to one. To satisfy the SBP property \eqref{sbp_dx}, modified finite difference stencils are used on a few grid points near the boundaries with truncation error  $\mathcal{O}(h^{q})$ and $\omega_j\neq 1$. We say the accuracy order is $(2q,q)$. The SBP operator $D$ does not use any ghost point, and is a square matrix when written in matrix form. 

To solve the elastic wave equation, we also need to approximate the second derivative $\f{d}{dx}(\vc(x) \f{d}{dx})$, where the variable coefficient $\vc(x)$ describes material property and/or coordinate transformation. The simple approach of applying a first derivative SBP operator twice results in an operator with a wide stencil and low accuracy near boundaries. This motivates to construct a second derivative SBP operator with the minimum stencil. 

In \cite{sjogreen2012fourth}, Sjögreen and Petersson derived SBP operators, $\wt G(\vc)$, for the approximation of $\f{d}{dx}(\vc(x) \f{d}{dx})$. We use the tilde symbol to emphasize that $\wt G(\vc)$ uses one ghost point outside each boundary. 
The ghost points in $\wt G(\vc)$ provide extra degrees of freedom to strongly impose boundary and grid interface conditions. We state the SBP property for the second derivative. 

\begin{definition}
The difference operator $\wt G(\vc)$ is a second derivative SBP operator if it satisfies
\begin{equation}\label{sbp_dxx_gp}
(\bs{u},  \wt G(\vc) \bs{v} )_{hw1} = -S_\vc(\bs{u}, \bs{v}) -\vc_1 u_1 \bs{\wt b_1}\bs{v} + \vc_n u_n \bs{\wt b_n}\bs{v}
\end{equation}
for all grid functions $\bs{u}$ and $\bs{v}$. Here, $\vc_1 = \vc(x_1)$, $\vc_n = \vc(x_n)$. The bilinear form $S_\vc(\cdot , \cdot)$ is symmetric positive semidefinite, and does not use any ghost point. The boundary derivative approximations $\bs{\wt b_1}\bs{v}\approx dv(x_1)/dx$ and  $\bs{\wt b_n}\bs{v}\approx dv(x_n)/dx$ use the ghost points.
\end{definition}

\revii{The operator $\wt G(\vc)$ with accuracy order (4,2) was constructed in  \cite{sjogreen2012fourth}.}
On grid points away from the boundary, the standard fourth order accurate central finite difference stencil is used in $\wt G(\vc)$. On the first six grid points, modified stencils are constructed to satisfy the SBP property \eqref{sbp_dxx_gp} with truncation error $\mathcal{O}(h^2)$. The truncation errors of the boundary derivative operators $ \bs{\wt b_1}$ and $\bs{\wt b_n}$ are $\mathcal{O}(h^4)$. In \cite{sjogreen2012fourth}, it was reported that $\wt G(\vc)$ of accuracy order (6,3) and (8,4) exist, but those operators were not published. 

Alternatively,  $\f{d}{dx}(\vc(x) \f{d}{dx})$ can also be approximated by an SBP operator, $\G(\vc)$, that does not use any ghost point. In this case, $\G(\vc)$ satisfies a similar SBP property.
\begin{definition}
The difference operator $\G(\vc)$ is a second derivative SBP operator if it satisfies
\begin{equation}\label{sbp_dxx}
(\bs{u},   \G(\vc) \bs{v} )_{hw1} = -\Smc_\vc(\bs{u}, \bs{v}) -\vc_1 u_1 \bs{b_1}\bs{v} + \vc_n u_n \bs{b_n}\bs{v}
\end{equation}
for all grid functions $\bs{u}$ and $\bs{v}$. Here, $\vc_1 = \vc(x_1)$, $\vc_n = \vc(x_n)$. The bilinear form $\Smc_\vc(\cdot , \cdot)$ is symmetric positive semidefinite. The boundary derivative approximations $\bs{b_1}\bs{v}\approx dv(x_1)/dx$ and  $\bs{b_n}\bs{v}\approx dv(x_n)/dx$.
\end{definition}

In \cite{Mattsson2012}, Mattsson constructed two SBP operators of this type. The first one uses the same fourth order interior stencil as $\wt G(\vc)$ by Sjögreen and Petersson. On the first six grid points, the truncation error is also $\mathcal{O}(h^2)$ but the stencils are different from $\wt G(\vc)$. The boundary derivative approximations are third order accurate. In addition, Mattsson also constructed an operator with truncation error $\mathcal{O}(h^6)$ in the interior and $\mathcal{O}(h^3)$ on the first nine grid points. In this case, the boundary derivative approximations are fourth order accurate. When using the SBP operators without ghost points, boundary and grid interface conditions are often imposed weakly by the SAT method \cite{Carpenter1994}. 

Over the years, the SBP operators $\wt G(\vc)$ and $\G(\vc)$ have been independently used to solve wave propagation problems, see \cite{petersson2015wave,Sjogreen2014} for $\wt G(\vc)$ and \cite{Almquist2021,duru2014stable} for $\G(\vc)$. In \cite{wang2018fourth}, an algorithm was developed to convert $\wt G(\vc)$ by Sjögreen and Petersson to an SBP operator $G(\vc)$ that does not use ghost points. By using the combination of $\wt G(\vc)$ and $G(\vc)$, an improved coupling procedure at nonconforming grid interfaces was developed for the wave equation in \cite{wang2018fourth}, and extended to the elastic wave equations in \cite{Zhang2021}. In both  \cite{wang2018fourth} and \cite{Zhang2021}, periodic boundary conditions are used in the spatial directions tangential to the interface, which allows standard centered interpolation stencils to be used on all hanging nodes. In this paper, we consider nonperiodic boundary conditions and boundary modified interpolation. We derive a fourth order spatial discretization based on the same combination of the (4,2) SBP operators as in \cite{wang2018fourth,Zhang2021}. In addition, we derive a more accurate discretization based on (6,3) SBP operators: for the SBP operator without ghost points, we use the one constructed by Mattsson \cite{Mattsson2012}. We also need a (6,3) SBP operator with ghost points. \revii{Such an operator was constructed in \cite{sjogreen2012fourth} but was not published. Though it is possible to construct one by following the procedure in  \cite{sjogreen2012fourth}, we use a much simpler approach with} the algorithm from \cite{wang2018fourth} to convert the (6,3) $\G(\vc)$ by Mattsson to a (6,3) SBP operator with ghost points.

\newcommand{\Gs}{\mathcal{G}(\vc)}
\newcommand{\bb}{\bs b_1}
\newcommand{\bbt}{{\widetilde{\bs b}}_1}
\newcommand{\df}{{\bs {\widetilde d}}^{(5)}}
\newcommand{\Gst}{\widetilde{\mathcal{G}}(\vc)}
\subsection{A second derivative SBP operator with ghost points of (6,3) accuracy}\label{sec_six}

Let $\Gs$ denote the second derivative SBP operator without ghost points with accuracy order (6,3) \cite{Mattsson2012}, and $\Gst$ denote the corresponding SBP operator with ghost points that is converted from $\Gs$. Since the ghost point is only used for the approximation on the boundary, the only difference between $\Gst$ and $\Gs$ appears in the first and the last row. In the following, we present the procedure of constructing the first row of $\Gst$ corresponding to the left boundary. The stencil at the right boundary is changed in the same way.

Let $V(x)$ be a smooth function in $[x_l,x_r]$ and \revi{define the vector $\bs{v}$ with elements $v_i:=V(x_i)$}.  
The operator associated with $\Gs$ that approximates the first  derivative at the left boundary can be written as 
\[
\bb \bs v = \frac{1}{h}\left(-\frac{25}{12}v_1 + 4v_2 -3v_3 + \frac{4}{3}v_4 - \frac{1}{4}v_5\right)=\frac{dV}{dx}(x_1)+\mathcal{O}(h^4).
\]
The following operator $\df$ is a first order accurate approximation of the fifth derivative at the boundary
\[
\df\bs v = \frac{1}{h^5}\left(-v_0+5v_1-10v_2+10v_3-5v_4+v_5\right)=\frac{d^5V}{dx^5}(x_1)+\mathcal{O}(h).
\]
For any polynomial $V(x)$ of order at most four, the operator $\bb$ is exact with no truncation error.  Also,  $\df\mathbf v=0$ for such polynomials. 
Therefore, $\bbt:=\bb+\beta h^4\df$ is a fourth order approximation of the first derivative at the boundary $x=x_l$ for any real number $\beta$. When $\beta=\frac{1}{4}$, the operator $\bbt$ does not use $v_5$, \revii{and has the minimum width}. We note that $\bbt$ uses the ghost point value $v_0$ and is the associated first derivative boundary operator for $\Gst$. \revii{To see how the boundary closure of $\Gst$ should be modified, we consider a particular grid function $\bs{u}=[1,0,0,\cdots,0]$. Since $\Gst$ and $\Gs$ satisfy the SBP property \eqref{sbp_dxx_gp} and \eqref{sbp_dxx}, respectively, we have
\[
h\omega_1(\Gst \mathbf v)_1 = h\omega_1(\Gs \mathbf v)_1 - \beta h^4 \gamma_1\df\mathbf v,
\]
}where $\omega_1=\frac{13649}{43200}$ is the weight of the SBP norm at the grid point $x=x_1$. Hence, the first row of $\Gst$ is  
\[
(\Gst \mathbf v)_1 = (\Gs \mathbf v)_1 - \frac{\beta h^4}{\omega_1 h}\gamma_1\df\mathbf v.
\]
From the second row, the operator $\Gst$ is the same as $\Gs$. The last row of $\Gst$, as well as the first derivative operator at the right boundary, can be obtained with the same approach.

\subsection{Interpolation and restriction operators}\label{sec_IR}
At a nonconforming interface, interpolation and restriction operators are used to couple the finite difference discretizations on the two sides of the interface. Let the interface be discretized by two uniform grids: a coarse grid with grid spacing $2h$ and a fine grid with grid spacing $h$. Let $\Proj$ and $\R$ be interpolation and restriction operators such that $\Proj \bs{u}_c \approx \bs{u}_f$ and $\R \bs{u}_f \approx \bs{u_c}$. Here, $\bs{u}_c$ and  $\bs{u}_f$ are grid functions with values of a smooth  function $u(x)$ evaluated on the coarse and fine grids, respectively. We note that in some literature, both $P$ and $R$ are referred to as interpolation operators. 


To preserve the SBP property of the overall discretization, the interpolation and restriction must satisfy a \revi{compatibility condition \cite{Mattsson2010}}. 
\begin{definition}
	The interpolation operator $\Proj$ and restriction operator $\R$ are compatible if 
	\begin{equation}\label{PRcomp}
	(\bs{v}_f, \Proj \bs{v}_c)_{hw1} = (\bs{v}_c, \R \bs{v}_f)_{2hw1},
	\end{equation}
	for all grid functions $\bs{v}_c$ on the coarse grid and $\bs{v}_f$ on the fine grid. The inner product $(\cdot,\cdot)_{hw1}$ is defined in \eqref{iph}, and $(\cdot,\cdot)_{2hw1}$ is defined analogously with grid spacing $2h$.
\end{definition}

For problems with periodic boundary conditions in the spatial direction parallel to the interface, all weights in the SBP inner product \revi{are} equal to one. In this case, interpolation and restrictions operators based on centered stencils can easily be constructed to the desired accuracy. For non-periodic problems, the weights in the SBP inner product are modified on a few grid points near the edges of the interface. As a consequence, the interpolation and restriction stencils shall also be modified accordingly so that the compatibility condition \eqref{PRcomp} is satisfied. Such operators were first constructed by Mattsson and Carpenter in \cite{Mattsson2010}. For both interpolation and restriction operators, the truncation error is $\mathcal{O}(h^{2q})$ in the interior and $\mathcal{O}(h^{q})$ on a few grid points near boundaries, where $q=1,2,3,4$. Therefore, the accuracy order of the interpolation and restriction operators from \cite{Mattsson2010}  have the same $(2q,q)$ fashion as the SBP operators. When these operators are used to solve the wave equation in second order form, it was found in \cite{Wang2016} that the overall convergence rate is one order lower than the case with a conforming interface. Based on a truncation error analysis, the convergence rate reduction is due to the $\mathcal{O}(h^q)$ error in interpolation and restriction, even though it is only presented on a few grid points in a two dimensional domain. 

To circumvent the order reduction in convergence rate, order-preserving (OP) interpolation and restriction operators were constructed in \cite{Almquist2019}. More precisely, for each $q=1,2,3,4$, OP interpolation and restriction operators come in two  pairs. \revii{The first pair consists of a projection operator $\Proj$ with accuracy order $(2q,q)$ and a restriction operator $\R$ with accuracy order $(2q,q+1)$, while the second pair consists of a projection operator $\Proj$ with accuracy order $(2q,q+1)$ and a restriction operator $\R$ with accuracy order $(2q,q)$.} Comparing with the operators by Mattsson and Carpenter, the accuracy near the boundary of either $\Proj$ or $\R$ is improved by one order. We note that the order of accuracy of the OP operators is maximized, and it is not possible to derive one pair of $P$ and $R$ with accuracy $(2q,q+1)$ \cite{Lundquist2018}. An SBP-SAT discretization based on these two pairs of OP  operators for the wave equation in  second order form with a nonconforming grid interface was derived in \cite{Almquist2019}, which has the same overall convergence rate for the case with a  conforming interface. 

In the next section, we will also use OP operators in the discretization of the elastic wave equations. We will show that, with interface conditions imposed by the ghost point method, only one pair of OP operators is \revi{needed} to obtain the desired convergence rate: for the discretizations based on operators of (4,2) and (6,3) order of accuracy, the overall convergence rates are 4 and 5, respectively.

\section{The semidiscretization}\label{sec_semidiscretization}
In this section, we use similar notations as our previous work \cite{Zhang2021}, but \revi{adapt} them to 
two dimensional case. Precisely, we present the semidiscretization of the elastic wave equations (\ref{elastic_curvi}) and  (\ref{elastic_curvi_f}), and focus on the numerical treatment at the nonconforming grid interface $\Gamma$.  For the physical boundary conditions, we use the methods in \cite{petersson2015wave}. For a clear presentation, we do not include terms corresponding to boundary conditions in our discretization. \revii{In the numerical experiments, however, we have physical boundary conditions of Dirichlet type and free surface, which are handled in the same way as \cite{petersson2015wave}.} We assume the mesh size in the reference domains satisfy
\[h_1(n_1^h-1) = 1, \ \ \ h_2(n_2^h-1) = 1,\ \ \  2h_1(n_1^{2h}-1) = 1, \ \ \ 2h_2(n_2^{2h}-1) = 1,\]
where $(h_1, h_2)$ and $(2h_1, 2h_2)$ correspond to the fine and coarse reference domains, respectively. That is, the ratio of the mesh sizes is $1:2$. This is an ideal mesh if the wave speed in $\Omega^f$ is half of the wave speed in $\Omega^c$. \revii{This type of mesh is also used in previous literature, for example, Fig. 3 in \cite{wang2018fourth}.} Note that other ratios can be treated analogously. 

We define the following sets of grid points 
\begin{align*}
I_{\Omega^c} &= \{i = 1,2,\cdots,n_1^{2h}, j = 1,2,\cdots,n_2^{2h}\}, \ \ \ I_{\Gamma^c}  = \{i = 1,2,\cdots,n_1^{2h}, j = n_2^{2h}\},\\
I_{\Omega^f} &= \{i = 1,2,\cdots,n_1^h, j = 1,2,\cdots,n_2^h\},\ \ \ \ \ I_{\Gamma^f}  = \{i = 1,2,\cdots,n_1^{h}, j = 1\}.
\end{align*}	
In the above, $I_{\Omega^c}$ and $I_{\Omega^f}$ correspond to the grid points in $\Omega^c$ and $\Omega^f$, respectively. Similarly, $I_{\Gamma^c}$ and $I_{\Gamma^f}$ correspond to the grid points on the interface of  $\Omega^c$ and $\Omega^f$, respectively. In addition, we introduce the multi-index notations
\[{\bf i} = (i,j),\ \ {\bf r}_{\bf i} = (r_i,s_j),\ \ {\bf x}_{\bf i} = (x_i,y_j).\]
Then, from \eqref{mapping} the Cartesian and curvilinear coordinates are related through the mapping ${\bf x}_{\bf i} = {\bf X}^c({\bf r}_{\bf i})$ when ${\bf i}\in I_{\Omega^c}$, and ${\bf x}_{\bf i} = {\bf X}^f({\bf r}_{\bf i})$ when ${\bf i}\in I_{\Omega^f}$. We define ${\bf c}$ and ${\bf f}$ as
\[{\bf c} = [c^{(1)}_{1,1}, c^{(2)}_{1,1}, c^{(1)}_{2,1}, c^{(2)}_{2,1},\cdots]^T, \quad {\bf f} = [f^{(1)}_{1,1}, f^{(2)}_{1,1}, f^{(1)}_{2,1}, f^{(2)}_{2,1},\cdots]^T\] 
for the numerical approximation of ${\bf C}({\bf x})$ and ${\bf F}({\bf x})$, respectively. Elementwise, we have ${\bf c}_{\bf i} \!:=\! \left(c^{(1)}_{\bf i}, c^{(2)}_{\bf i}\right)^T\!\approx\! {\bf C}({\bf x}_{\bf i})$ and ${\bf f}_{\bf i} := \left(f^{(1)}_{\bf i}, f^{(2)}_{\bf i}\right)^T\approx {\bf F}({\bf x}_{\bf i})$. 

In our discretization, we use SBP operators with ghost points in $\Omega^c$ and SBP operators without ghost points in $\Omega^f$ for the numerical coupling at the nonconforming grid interfaces. Hence, the vector $\bf c$ has length $2n_1^{2h}(n_2^{2h}+1)$ and contains the ghost point values. The length of ${\bf f}$ is $2n_1^{h}n_2^h$ because ${\bf f}$ contains no ghost point value. It is also possible to use SBP operators with ghost points in both $\Omega^c$ and $\Omega^f$, but our approach yields a smaller linear system for the ghost point values with a better structure.

In the following, we present the semidiscretization and use the convention that a tilde symbol over an operator indicates that the operator applies to a grid function with ghost points. The elastic wave equation (\ref{elastic_curvi}) in $\Omega^c$  is approximated by
	\begin{equation}\label{elastic_semi_c}
	\left(\mathcal{J}^{2h}_{\rho}\frac{d^2{{\bf c}}}{dt^2}\right)_{\bf i} = \wt{\mathcal{L}}^{2h}_{\bf i} {{\bf c}},\quad {\bf i}\in I_{\Omega^c},\quad t>0,
	\end{equation}
where $\mathcal{J}^{2h}_{\rho}:=({\rho}^{2h}\otimes{\bf I})(J^{2h}\otimes {\bf I})$. Here, 
$\rho^{2h}$ and $J^{2h}$ are $n_1^{2h}n_2^{2h}\times n_1^{2h}n_2^{2h}$ diagonal matrices with the diagonal elements $\rho^{2h}_{\bf i} = \rho^c({\bf x}_{\bf i})$ and $J^{2h}_{\bf i} = J^c({\bf x}_{\bf i})$, ${\bf i}\in I_{\Omega^c}$. The matrix ${\bf I}$ is a $2\times 2$ identity matrix since the spatial dimension of the governing equation is $2$. The discrete spatial operator is
	\begin{equation}\label{L_operator}
	\wt{\mathcal{L}}^{2h} {{\bf c}} = {G}_1^{2h}({N}_{11}^{2h}){\bf c}+\wt{G}_2^{2h}({N}_{22}^{2h}){{\bf c}}+D_1^{2h}({N}_{12}^{2h}{D}_2^{2h}{\bf c}) + D_2^{2h}({N}_{21}^{2h}{D}_1^{2h}{\bf c}),
	\end{equation}
	where the four terms on the right-hand side are given in Appendix \ref{definitions}. We note that the operator $\wt{\mathcal{L}}^{2h}$ uses ghost points through $\wt{G}_2^{2h}({N}_{22}^{2h}){{\bf c}}$.

The elastic wave equation (\ref{elastic_curvi_f}) in $\Omega^f$ is approximated by
		\begin{equation}\label{elastic_semi_f}
		\left(\mathcal{J}^{h}_{\rho}\frac{d^2{{\bf f}}}{dt^2}\right)_{\bf i} = {\mathcal{L}}^{h}_{\bf i} {{\bf f}},\quad {\bf i}\in I_{\Omega^f}\backslash I_{{\Gamma^f}},\quad t>0,
		\end{equation}
		where $\mathcal{J}^{h}_{\rho}$ is defined in the same way as $\mathcal{J}^{2h}_{\rho}$, and the discrete spatial operator is
		\begin{equation}\label{Lf_operator}
		{\mathcal{L}}^{h} {{\bf f}} = {G}_1^{h}({N}_{11}^h){\bf f}+{G}_2^{h}({N}_{22}^h){\bf f}+D_1^{h}({N}_{12}^{h}{D}_2^{h}{\bf f}) + D_2^{h}({N}_{21}^{h}{D}_1^{h}{\bf f}).
		\end{equation}
		Here, $G_1^h(N_{11}^h){\bf f}$ and $D_{l}^h(N_{lm}^hD_m{\bf f})$ are defined similarly as $G_1^{2h}(N_{11}^{2h}){\bf c}$ and $D_{l}^{2h}(N_{lm}^{2h}D_m{\bf c})$ in (\ref{L_operator}), and $G_2^h(N_{22}^h){\bf f}$ is given in Appendix \ref{definitions}. We note that the index ${\bf i}\in I_{\Omega^f}\backslash I_{{\Gamma^f}}$ in \eqref{elastic_semi_f} means that the approximation does not include the grid points on the interface. 

For the  grid points on the interface $\Gamma$ in the domain $\Omega^f$, we compute the numerical solution by using the injection method with a scaled interpolation operator,
		\begin{equation}\label{continuous_sol}
		{\bf f}_{\bf i} = {\mathcal{P}}_{\bf i}({\bf c}),\quad {\bf i}\in I_{\Gamma^f},
		\end{equation}
		which imposes the continuity of the solution at the interface $\Gamma$. The scaled interpolation operator ${\mathcal{P}}$ is related to the OP interpolation operator $P$ via 		
		\begin{equation}\label{scaleP}
		{\mathcal{P}} = (\mathcal{J}_{\Gamma\Lambda}^h)^{-\frac{1}{2}}(\Proj\otimes{\bf I})(\mathcal{J}_{\Gamma\Lambda}^{2h})^{\frac{1}{2}},
		\end{equation}
		where $\mathcal{J}_{\Gamma\Lambda}^{h}:=(J_{\Gamma}^{h}\Lambda^{h})\otimes{\bf I}$ and $\mathcal{J}_{\Gamma\Lambda}^{2h}:=(J_{\Gamma}^{2h}\Lambda^{2h})\otimes{\bf I}$. Here, $J_{\Gamma}^h$ and $\Lambda^h$ are $n_1^{h}\times n_1^{h}$ diagonal matrices with diagonal elements $J_{\Gamma,{\bf i}}^h = J^f({\bf x}_{\bf i})$ and $\Lambda_{\bf i}^h = \Lambda^f({\bf x_i})$, ${\bf i}\in I_{\Gamma^f}$, with $\Lambda^f$ given in (\ref{lambda_cf}); ${J}_{\Gamma}^{2h}$ and ${{\Lambda}^{2h}}$ are $n_1^{2h}\times n_1^{2h}$ diagonal matrices and are defined in the same way as $J_{\Gamma}^h$ and $\Lambda^h$. The OP interpolation operator $\Proj$ has size $n_1^h\times n_1^{2h}$ for scalar grid functions at $\Gamma^c$ as defined in (\ref{PRcomp}). It is clear that the scaled interpolation operator $\mathcal{P}$ has the same order of accuracy as $\Proj$.
		
		In the implementation of our scheme, we use the injection method \eqref{continuous_sol} to obtain the solution on the interface of the fine domain. However, for the stability analysis in  Sec.~\ref{stability}, it is more convenient to use \revi{an equivalent form by taking the time derivative twice of \eqref{continuous_sol}. After that, using \eqref{elastic_semi_c} we obtain }
		\begin{equation}\label{elastic_semi_f_i}
		\left(\mathcal{J}^{h}_{\rho}\frac{d^2{\bf f}}{dt^2} \right)_{\bf i}=
		\mathcal{L}^h_{\bf i}{\bf f} + {\bm \eta}_{\bf i}, \quad {\bf i}\in I_{\Gamma^f}
		\end{equation}
		with 
		\begin{equation}\label{eta}
		{\bm \eta} = \left.\mathcal{J}^{h}_{\rho}\right\vert_{\Gamma} {\mathcal{P}}\left(\left(\mathcal{J}^{2h}_{\rho}\right)^{-1}\left.\wt{\mathcal{L}}^{2h} {\bf c}\right\vert_{\Gamma}\right) - \left.\mathcal{L}^{h}{\bf f}\right\vert_{\Gamma}.
		\end{equation}
		The term $\bm \eta$ is the difference between the spatial approximation at the nonconforming grid interface, and is approximately zero with a truncation error analysis given in Sec.~\ref{accuracy}. Next, we introduce a general notation for the schemes (\ref{elastic_semi_f}) and (\ref{elastic_semi_f_i}) as
		\begin{align}\label{fine_scheme}
		\left(\mathcal{J}^{h}_{\rho}\frac{d^2{\bf f}}{dt^2}\right)_{\bf i} = \hat{\mathcal{L}}^h_{\bf i}{\bf f} = \left\{
		\begin{aligned}
		&\mathcal{L}^h_{\bf i}{\bf f} +{\bm \eta}_{\bf i}, \quad {\bf i}\in I_{\Gamma^f}\\
		&\mathcal{L}^h_{\bf i}{\bf f},\quad\quad\quad {\bf i}\in I_{\Omega^f}\backslash I_{\Gamma^f} 
		\end{aligned}
		\right. ,\quad t > 0.
		\end{align}
		
In addition to the continuity of solution \eqref{continuous_sol}, we also need to numerically impose the continuity of traction at the interface,
		\begin{equation}\label{continuous_traction}
		\left(\left(\mathcal{J}_{\Gamma\Lambda}^{2h}\right)^{-1}\wt{\mathcal{A}}_2^{2h}{\bf c}\right)_{\bf i}
		= {\mathcal{R}}_{\bf i}\Big(\left(\mathcal{J}_{\Gamma\Lambda}^h\right)^{-1}(\mathcal{A}_2^h{\bf f}-h_2\omega_1{\bm \eta})\Big), \quad {\bf i}\in I_{\Gamma^c}.
		\end{equation}
The terms $\wt{\mathcal{A}}_2^{2h}{\bf c}$ and $\mathcal{A}_2^{h}{\bf f}$ are given in Appendix \ref{definitions}. The scaled restriction operator $\mathcal{R}$ is  
			\begin{equation}\label{scaleR}
		{\mathcal{R}} =  (\mathcal{J}_{\Gamma\Lambda}^{2h})^{-\frac{1}{2}}({\R}\otimes{\bf I})(\mathcal{J}_{\Gamma\Lambda}^{h})^{\frac{1}{2}},
		\end{equation}
		where $\R$ is the OP restriction operator of size $n_1^{2h}\times n_1^h$ defined in (\ref{PRcomp}). The term $h_2\omega_1{\bm \eta}$ in \eqref{continuous_traction} is a correction term to the continuity of traction. It is essential for stability, because it cancels out ${\bm \eta}$ in the  spatial discretization \eqref{fine_scheme} in the fine domain. As will be seen later, the compatibility condition (\ref{PRcomp}), as well as the scaling of the interpolation (\ref{scaleP}) and restriction operators (\ref{scaleR}), are also important for stability. 

The equation for the continuity of  solution \eqref{continuous_sol} involves no ghost points. In the \revi{continuity of  traction \eqref{continuous_traction}}, both $\wt{\mathcal{A}}_2^{2h}$ and ${\bm \eta}$ use ghost points. The numerical solutions on the ghost points are coupled together by the restriction operator. 
We rewrite \eqref{continuous_traction}  as a linear system 
\begin{equation}\label{Mass}
M{\mathbf x}={\mathbf b},
\end{equation}
where ${\mathbf x}$ contains the ghost points values. \revii{For convenience, we scale $M$ and $\mathbf{b}$ such that the elements in $M$ are positive on the diagonal and have no meshsize dependent factor.} 

\revii{In general, the explicit form of $M$ contains lengthy expressions that depend on the material parameters as well as interpolation and restriction stencils, and do not provide much insight into its structure. In the case of constant coefficients and  Cartesian grids, we can separate the interpolation and restriction stencils from the material parameters in $M$. As an example, consider  $\Omega^c = [0, l_1]\times[0, \alpha l_2]$ and $\Omega^f = [0, l_1]\times[\alpha l_2, l_2]$ for some $\alpha \in (0,1)$, we have
	\begin{equation*}
	N_{22}^c = \frac{l_1}{\alpha l_2}\begin{pmatrix}
	\mu^c & 0\\
	~0& 2\mu^c+\lambda^c
	\end{pmatrix}, \quad \Lambda^c = \frac{1}{\alpha l_2}, \quad \Lambda^f = \frac{1}{(1-\alpha) l_2}, \quad J^c = \alpha l_1 l_2,\quad  J^f = (1-\alpha)l_1l_2.
	\end{equation*}
From \eqref{continuous_traction}, after some manipulation, we have
	\begin{align}
M &= \frac{\beta}{2}\frac{1}{\alpha l_2}\frac{1-\alpha}{\alpha}\frac{\rho^f}{\rho^c} M^1\! \otimes\! \mbox{diag}([\mu^c, 2\mu^c + \lambda^c]) + \frac{\beta}{\alpha l_2} \mbox{diag}([1,1,\cdots,1]_{1\times n_1^{2h}}) \otimes \mbox{diag}([\mu^c, 2\mu^c + \lambda^c])\label{M1a}\\
&= \frac{\beta}{2}\frac{1}{\alpha l_2}\frac{1-\alpha}{\alpha}\frac{\rho^f}{\rho^c}\left(M^1+ \frac{\alpha}{1-\alpha}\frac{2\rho^c}{\rho^f} \mbox{diag}([1,1,\cdots,1]_{1\times n_1^{2h}})\right) \otimes \mbox{diag}([\mu^c, 2\mu^c + \lambda^c]).\notag
\end{align}
In (\ref{M1a}), the first term  corresponds to the right hand side of (\ref{continuous_traction}), and the second term corresponds to the left hand side of (\ref{continuous_traction}). The matrix $M^1$  depends only on the interpolation and restriction stencils, and $\beta = \frac{1}{4}$ is derived in Section \ref{sec_six}. If $M^1$ is diagonally dominant, then $M$ is also  diagonally dominant. 

In Figure \ref{diagonally_dom}, we show the quantity $M^1_{ii}-\sum_{j\neq i}|M^1_{ij}|$ when $n_1^{2h}=41$ and 81 for $q = 2$ and $q = 3$. It is obvious that when $q=2$, $M^1$ is strictly row-diagonally dominant, and consequently, $M$ is strictly row-diagonally dominant. Similarly, we have also verified that $M^1$ and $M$ are strictly column-diagonally dominant. This result holds for other values of $n_1^{2h}$, because the nonzero elements in the interior rows of $M$ are repeated. In addition, the result can also be generalized to problems with variable coefficients and problems on curvilinear grids, where the ghost points corresponding to the two components of the displacement field are coupled together. In that case, $M$ becomes block-diagonally dominant.

When $q = 3$, $M^1$ is not row-diagonally dominant, but the eigenvalues of $M^1$ shown in Table \ref{tab_eigen} are positive, with the smallest and largest eigenvalues independent of the mesh size. In both cases, $M$ is invertible.
}

\begin{figure}[htbp]
	\centering
	\includegraphics[width=0.48\textwidth,trim={0.5cm 0.cm 1cm 0.1cm}, clip]{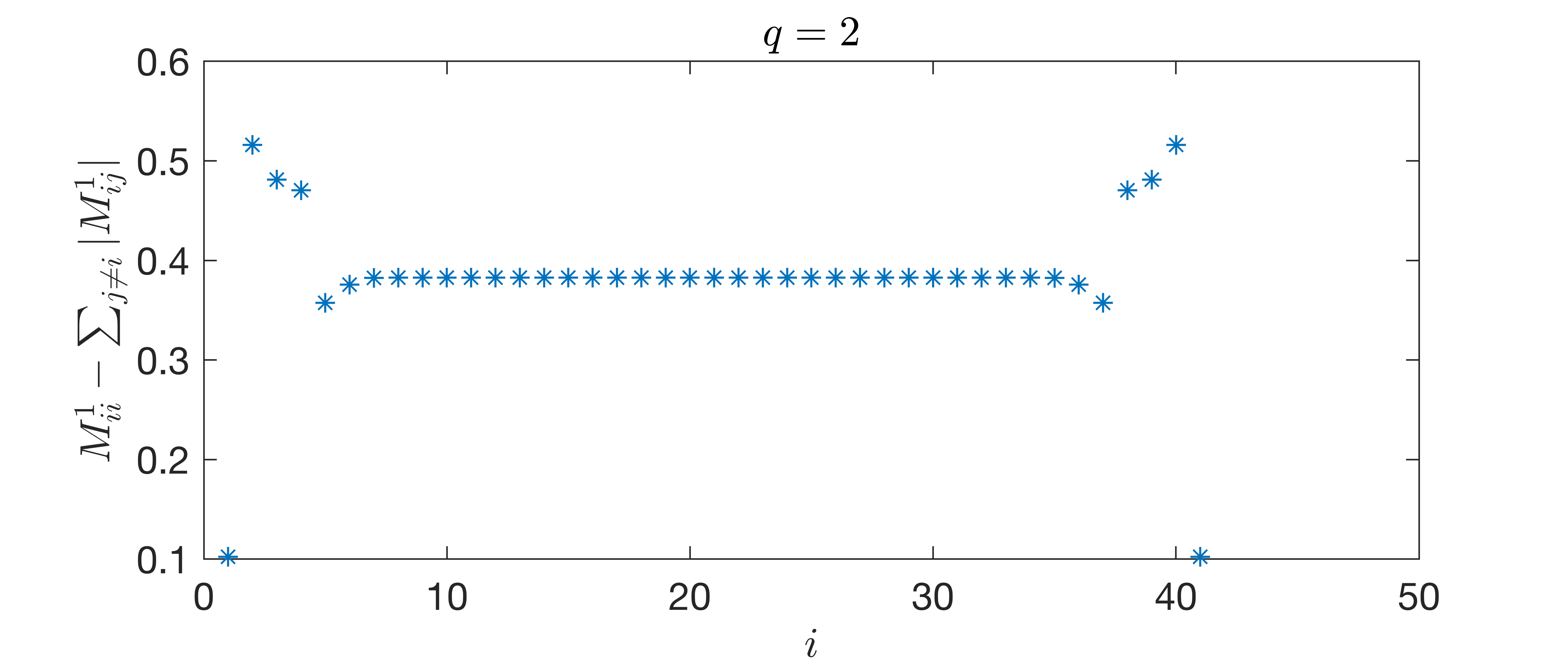}
		\includegraphics[width=0.48\textwidth,trim={0.5cm 0.cm 1cm 0.1cm}, clip]{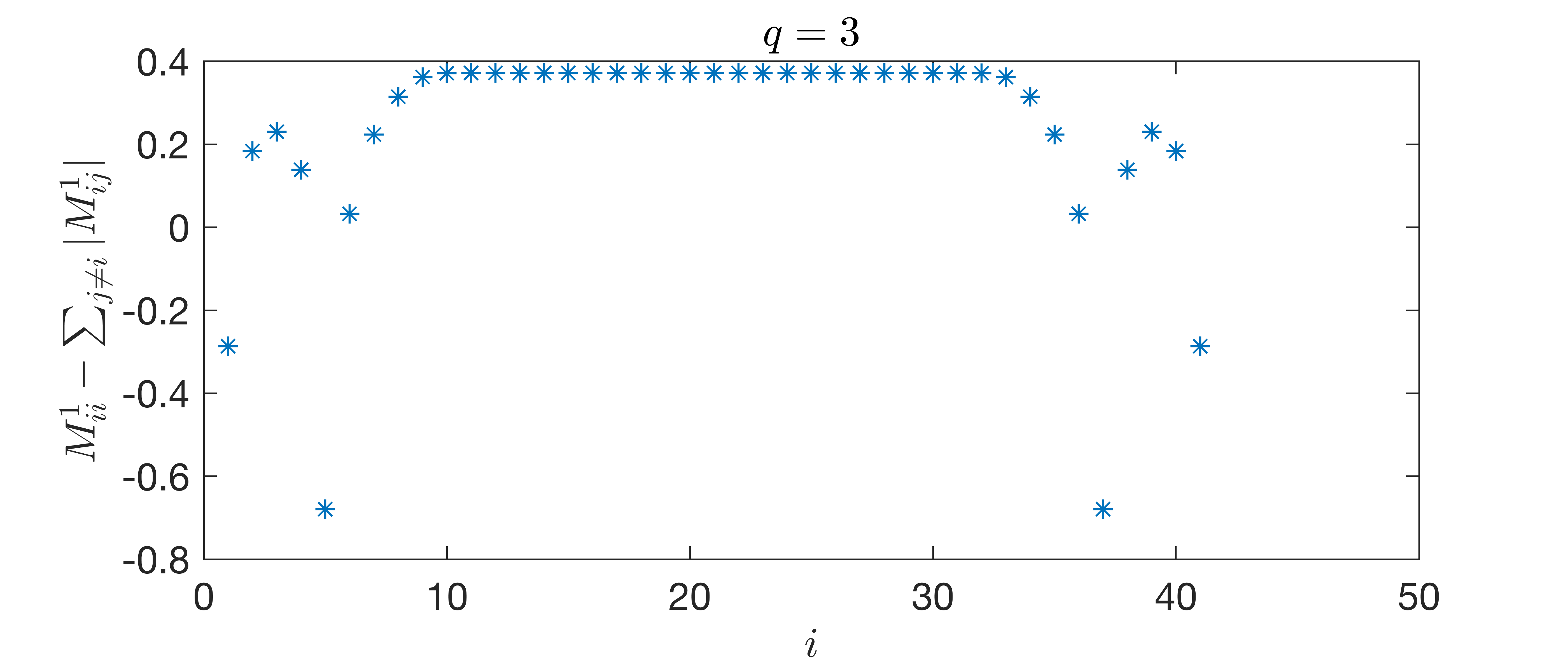}\\
		\includegraphics[width=0.48\textwidth,trim={0.5cm 0.cm 1cm 0.1cm}, clip]{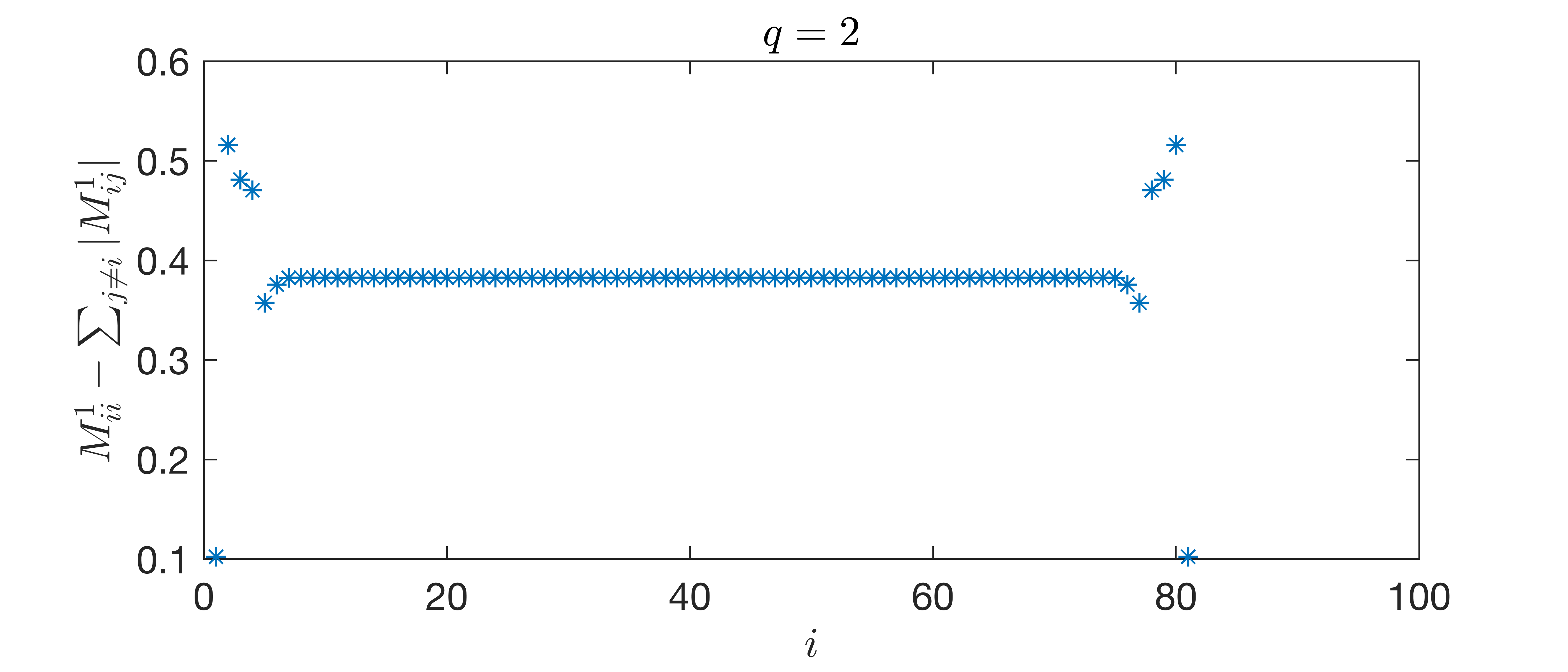}
		\includegraphics[width=0.48\textwidth,trim={0.5cm 0.cm 1cm 0.1cm}, clip]{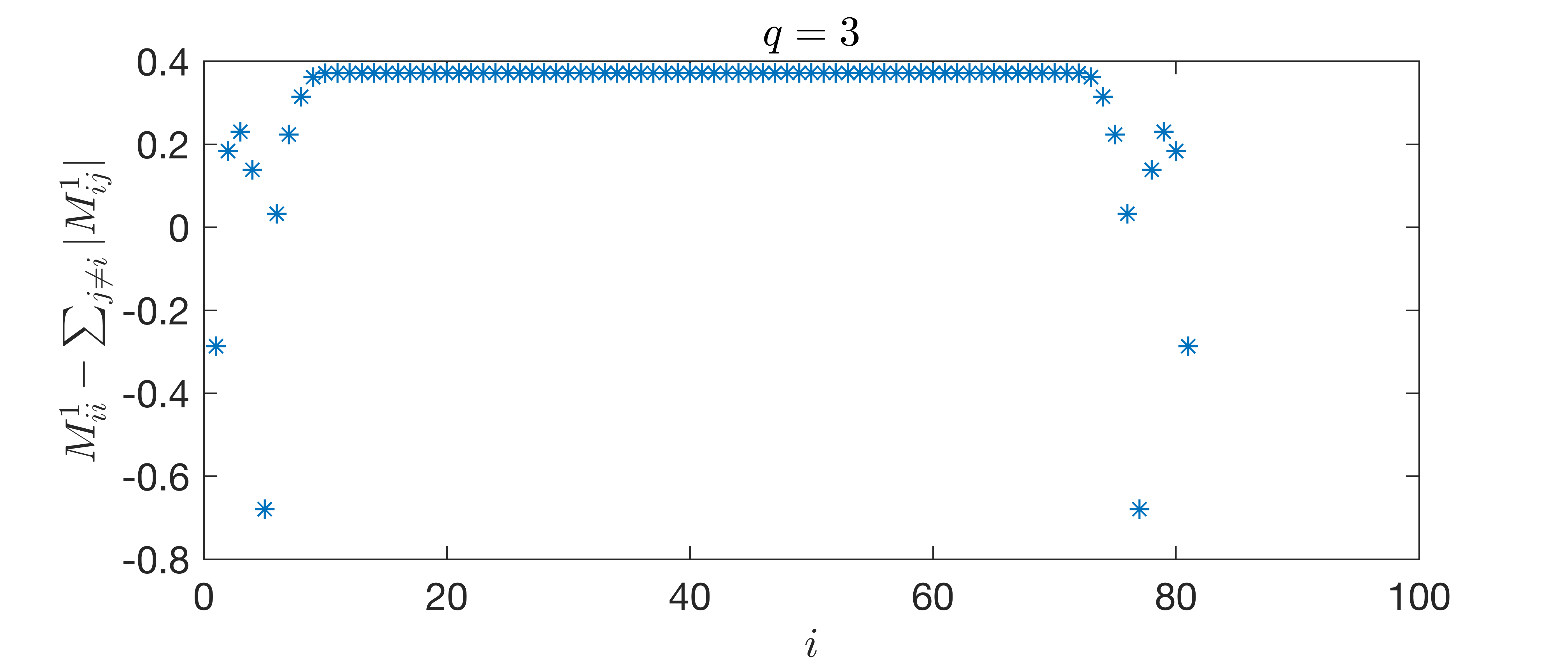}
	\caption{The quantity $M^1_{ii}-\sum_{j\neq i}|M^1_{ij}|$ when $n_1^{2h} = 41$ (top panel) and $n_1^{2h} = 81$ (bottom panel). 
	}\label{diagonally_dom}
\end{figure}

\begin{table}
	\centering
	\begin{tabular}{llllll}
		\hline
		$n_1^{2h}$ & $\lambda_{\mbox{min}} (M^1)$& $\lambda_{\mbox{max}} (M^1)$ & $n_1^{2h}$  & $\lambda_{\mbox{min}} (M^1)$& $\lambda_{\mbox{max}} (M^1)$\\
		\hline
		41 & $0.202175911164221$ & $2.339317314764304$&  321 & $0.202175913794549$& $2.339317314764306$\\
		81 & $0.202175913794549$ & $2.339317314764305$& 641&$0.202175913794548$  & $2.339317314764306$\\
		161 & $0.202175913794549$ &$2.339317314764303$ & 1281& $0.202175913794546$& $2.339317314764316$\\
		\hline
	\end{tabular}
	\caption{The minimal and maximal eigenvalues of $M^1$ in (\ref{M1a}) when $q = 3$.}
	\label{tab_eigen}
\end{table}

\subsection{Accuracy}\label{accuracy}


When using SBP operators with the order of accuracy $(2q,q)$, the convergence rate of the discretization is often higher than $q$, a phenomenon termed as \textit{gain in convergence} \cite{Gustafsson1975,Svard2019}.  For equations with a second derivative in space, an optimal gain is two orders. The precise gain depends on the equation and numerical boundary and interface treatments. Indeed, the gain in convergence could be higher or lower than two orders \cite{Wang2017}. With nonconforming grid interfaces, the interpolation and restriction operators also affect the convergence rate. While the convergence analysis for the full method is out of the scope of this work, we shall analyze the truncation error to identify the precise accuracy requirement for the interpolation and restriction operators.  For 2D problems discretized on a grid with $\mathcal{O}(n^2)$ grid points, to obtain the optimal convergence rate $\min(2q,q+2)$, we may have at most $\mathcal{O}(n)$ grid points with truncation error $\mathcal{O}(h^q)$, and $\mathcal{O}(1)$ grid points with truncation error $\mathcal{O}(h^{q-1})$ \cite{Wang2018b}. Note that we have dropped the subscripts and superscripts of $n$ and $h$ in the big $\mathcal{O}$ notation.


The truncation errors of the semidiscretizations \eqref{elastic_semi_c} and \eqref{fine_scheme} are determined by the operators $\wt{\mathcal{L}}^{2h}$ and $ \hat{\mathcal{L}}^h$.   Sufficiently far away from boundaries and the interface $\Gamma$, standard central finite difference approximations with truncation error  $\mathcal{O}(h^{2q})$ are used. On the grid points close to the interface but not on the interface,  SBP boundary closures with truncation error $\mathcal{O}(h^{q})$ are used to approximate the spatial derivatives, and the number of such grid points is $\mathcal{O}(n)$. In both cases, the truncation error is not affected by the numerical interface treatment, and has the desired order for the optimal convergence rate. For the grid points on the interface, the truncation error is determined by the SBP operators, the interpolation/restriction operators as well as the ghost point values. Since the truncation errors of the SBP operators and the interpolation/restriction operators are known by construction, we only need to analyze the errors on the ghost points. We consider  \eqref{continuous_traction} in the following form so that both the left-hand side and right-hand side approximate to a vector of zeros,

\begin{equation}\label{continuous_traction_move}
		\left(\left(\mathcal{J}_{\Gamma\Lambda}^{2h}\right)^{-1}\wt{\mathcal{A}}_2^{2h}{\bf c}\right)_{\bf i}
		- {\mathcal{R}}_{\bf i}\Big(\left(\mathcal{J}_{\Gamma\Lambda}^h\right)^{-1}\mathcal{A}_2^h{\bf f} \Big)   =   -{\mathcal{R}}_{\bf i}\Big(h_2\omega_1(\mathcal{J}_{\Gamma\Lambda}^h)^{-1}{\bm \eta}\Big), \quad {\bf i}\in I_{\Gamma^c}.
		\end{equation} 
To analyze the errors on the ghost points, we consider ${\bf c}$ and ${\bf f}$ as sufficiently smooth functions evaluated on the grid, except on the ghost points. We then determine the errors on the ghost points. 
		
Left-hand side of \eqref{continuous_traction_move}: 1) The truncation error of $\wt{\mathcal{A}}_2^{2h}{\bf c}$ is determined by the first derivative approximations as well as the ghost point values, whose truncation errors are denoted by $\mathcal{O}(h^{d})$ and $\mathcal{O}(h^{g})$, respectively. 
Hence, the  truncation error of $\wt{\mathcal{A}}_2^{2h}{\bf c}$ is  $\mathcal{O}(h^{d})+\mathcal{O}(h^{g-1})$. The order $g-1$ is due to the $1/h$ factor in $\wt{\mathcal{A}}_2^{2h}$, \revii{which involves first derivative approximation}; 2) In the second term, the values of ${\bf f}$ on the interface is computed by \eqref{continuous_sol} using the interpolation operator. As a consequence, \revi{the truncation error of $\mathcal{A}_2^h{\bf f}$} is $\mathcal{O}(h^{d})+\mathcal{O}(h^{p-1})$. Taking into account of the restriction operator, the truncation error of all terms on the left-hand side is $\mathcal{O}(h^{d})+\mathcal{O}(h^{g-1})+\mathcal{O}(h^{p-1})+\mathcal{O}(h^{r})$. We note that the interpolation/restriction error $\mathcal{O}(h^{p, r})$ in the interior of $\Gamma$ is different from that near the edges, i.e. $p$ and $r$ depend on index ${\bf i}$.

Right-hand side of \eqref{continuous_traction_move}:  The truncation error of $\wt{\mathcal{L}}^{2h}{\bf c}$ and ${\mathcal{L}}^{h}{\bf f}$ are $\mathcal{O}(h^{q})+\mathcal{O}(h^{g-2})$ and $\mathcal{O}(h^{q})+\mathcal{O}(h^{p-2})$. Then, the truncation error of ${\bm \eta}$ is  $\mathcal{O}(h^{q})+\mathcal{O}(h^{g-2})+\mathcal{O}(h^{p-2})$. \revi{Due to $h_2$, the term ${h_2\omega_1(\mathcal{J}_{\Gamma\Lambda}^h)^{-1}\bm \eta}$ is $\mathcal{O}(h^{q+1})+\mathcal{O}(h^{g-1})+\mathcal{O}(h^{p-1})$. Since the number of nonzero elements in each row of the restriction operator is independent of $h$, when the restriction operator is applied to ${h_2\omega_1(\mathcal{J}_{\Gamma\Lambda}^h)^{-1}\bm \eta}$, it} gives a truncation error of all terms on the right-hand side  $\mathcal{O}(h^{q+1})+\mathcal{O}(h^{g-1})+\mathcal{O}(h^{p-1})$. It is important to note that when the restriction operator acts on ${\bm \eta}\sim 0$, it does not introduce additional truncation error.

\revii{Next, we move all terms $\mathcal{O}(h^{g-1})$ to the left-hand side, and the other terms to the right-hand side. We write the error equation in matrix form 
\begin{equation}\label{eqn_e}
M{\bf e}={\bf T}, 
\end{equation}
where we have multiplied by $h$ on both sides, so that the error on the ghost point, ${\bf e}$, is $\mathcal{O}(h^{g})$. Consequently, we have 
 ${\bf T}=\mathcal{O}(h^{d+1})+\mathcal{O}(h^{p})+\mathcal{O}(h^{r+1})+\mathcal{O}(h^{q+2})$}. The matrix $M$ is exactly the same as in \eqref{Mass}. We know $d\geq q+1$ and $p+r\leq 2q+1$. To make the right-hand side as small as possible, we choose $p=q+1$ and $r=q$, i.e. the interpolation operator is one order more accurate than the restriction operator near the edges of the interface. As a consequence, ${\bf T}$ is $\mathcal{O}(h^{q+2})$ for the ghost points in the interior, and $\mathcal{O}(h^{q+1})$ on a few ghost points near the edges of the interface. 
To get the order $g$, we need to solve \eqref{eqn_e} and analyze the component of ${\bf e}=M^{-1}{\bf T}$. For strictly diagonally dominant $M$, we have the following results.

\begin{lemma}\label{Mr}
If $M$ is strictly row-diagonally dominant, then $M^{-1}$ exists and 
\[
|M^{-1}|_\infty\leq \frac{1}{\min_i (M_{ii}-\sum_{j\neq i} |M_{ij}|)}:=C_\infty.
\]
\end{lemma}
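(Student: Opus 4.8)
The plan is to split the claim into the two standard pieces: nonsingularity of $M$, and the explicit bound on $|M^{-1}|_\infty$. For nonsingularity I would argue directly from strict row-diagonal dominance. Suppose $M\mathbf z=\mathbf 0$ with $\mathbf z\neq\mathbf 0$, and let $k$ be an index with $|\mathbf z_k|=\|\mathbf z\|_\infty>0$. The $k$-th equation reads $M_{kk}\mathbf z_k=-\sum_{j\neq k}M_{kj}\mathbf z_j$, hence $M_{kk}|\mathbf z_k|\le\big(\sum_{j\neq k}|M_{kj}|\big)\|\mathbf z\|_\infty<M_{kk}|\mathbf z_k|$, a contradiction; so $M$ is invertible. (This is the Levy–Desplanques theorem, equivalently Gershgorin's circle theorem applied to $M$.) Here I use that the diagonal entries are positive, which holds by the scaling of $M$ introduced just before \eqref{Mass}, so $M_{kk}=|M_{kk}|$.

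For the norm estimate I would extract the extremal component of the solution. Let $\mathbf b$ be arbitrary, put $\mathbf e=M^{-1}\mathbf b$, and choose $k$ with $|\mathbf e_k|=\|\mathbf e\|_\infty$. Isolating the diagonal term in the $k$-th row of $M\mathbf e=\mathbf b$ gives
\[
M_{kk}\mathbf e_k=\mathbf b_k-\sum_{j\neq k}M_{kj}\mathbf e_j ,
\]
and taking absolute values,
\[
M_{kk}\|\mathbf e\|_\infty=M_{kk}|\mathbf e_k|\le\|\mathbf b\|_\infty+\Big(\sum_{j\neq k}|M_{kj}|\Big)\|\mathbf e\|_\infty .
\]
Moving the sum to the left and bounding $M_{kk}-\sum_{j\neq k}|M_{kj}|$ from below by $m:=\min_i\big(M_{ii}-\sum_{j\neq i}|M_{ij}|\big)>0$ yields $m\,\|\mathbf e\|_\infty\le\|\mathbf b\|_\infty$, i.e. $\|M^{-1}\mathbf b\|_\infty\le m^{-1}\|\mathbf b\|_\infty$. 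As $\mathbf b$ is arbitrary, $|M^{-1}|_\infty\le m^{-1}=C_\infty$.

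I do not anticipate a genuine obstacle: the result is classical and the argument is two short steps. The only points needing care are that the rescaling of $M$ makes every diagonal entry strictly positive (so $M_{kk}=|M_{kk}|$ and the row margins $M_{ii}-\sum_{j\neq i}|M_{ij}|$ are literally the deficits appearing in $C_\infty$), and that the minimum $m$ over the finitely many rows is strictly positive — which is exactly the strict row-diagonal dominance hypothesis and is what makes $C_\infty$ finite. If $|M^{-1}|_\infty$ is meant as the entrywise maximum rather than the induced $\infty$-operator norm, the same bound follows a fortiori, since the entrywise norm is dominated by the operator norm.
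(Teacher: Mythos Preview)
Your proof is correct and is the classical argument for this well-known bound. The paper does not actually prove the lemma but simply cites Theorem~1 in \cite{Ahlberg1963}; your two-step argument (Levy--Desplanques for invertibility, then extremal-component estimate for the $\infty$-norm bound) is precisely the standard proof one finds in that reference, so there is no substantive difference.
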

\begin{proof}
See Theorem 1 in \cite{Ahlberg1963}.
\end{proof}
An immediate consequence of Lemma \ref{Mr} is 
\begin{lemma}\label{Mc}
If $M$ is strictly column-diagonally dominant, then $M^{-1}$ exists and  
\[
|M^{-1}|_1\leq \frac{1}{\min_j (M_{jj}-\sum_{i\neq j} |M_{ij}|)}:= C_1.
\]
\end{lemma}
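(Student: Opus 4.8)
The plan is to deduce Lemma \ref{Mc} directly from Lemma \ref{Mr} by passing to the transpose. The observation is that $M$ being strictly column-diagonally dominant is exactly the statement that $M^T$ is strictly row-diagonally dominant: for each index $j$ the $j$-th row of $M^T$ consists of the entries $(M^T)_{ji}=M_{ij}$, so the row-dominance inequality $(M^T)_{jj}>\sum_{i\neq j}|(M^T)_{ji}|$ reads $M_{jj}>\sum_{i\neq j}|M_{ij}|$, which is the hypothesis of the lemma.

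First I would apply Lemma \ref{Mr} to the matrix $M^T$. This gives that $(M^T)^{-1}$ exists and
\[
|(M^T)^{-1}|_\infty\leq\frac{1}{\min_j\big((M^T)_{jj}-\sum_{i\neq j}|(M^T)_{ji}|\big)}=\frac{1}{\min_j\big(M_{jj}-\sum_{i\neq j}|M_{ij}|\big)}=C_1 .
\]
Since $\det M=\det M^T\neq 0$, the matrix $M$ is invertible as well, and $(M^{-1})^T=(M^T)^{-1}$.

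Next I would use the elementary identity relating the two induced norms, namely that the maximum-absolute-column-sum norm of any matrix $A$ equals the maximum-absolute-row-sum norm of $A^T$, i.e. $|A|_1=|A^T|_\infty$. Applying this with $A=M^{-1}$ and combining with the previous display yields
\[
|M^{-1}|_1=|(M^{-1})^T|_\infty=|(M^T)^{-1}|_\infty\leq C_1 ,
\]
which is the claimed bound. There is essentially no obstacle here; the only thing to be careful about is the index bookkeeping in transposing the diagonal-dominance condition and in identifying $(M^{-1})^T$ with $(M^T)^{-1}$, both of which are routine. A one-line alternative, if one prefers to avoid invoking the transpose explicitly, is to repeat the proof of Theorem 1 in \cite{Ahlberg1963} working with columns instead of rows, but reducing to Lemma \ref{Mr} via $M^T$ is the shortest route.
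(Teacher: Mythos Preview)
Your proof is correct and follows exactly the same approach as the paper: the paper's proof is the single line ``It follows from $|M^{-1}|_1=|(M^{-1})^T|_\infty=|(M^{T})^{-1}|_\infty$ and Lemma \ref{Mr},'' which is precisely the transpose argument you spell out in detail.
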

\begin{proof}
It follows from $|M^{-1}|_1=|(M^{-1})^T|_\infty=|(M^{T})^{-1}|_\infty$ and Lemma \ref{Mr}.
\end{proof}

With the above two lemmas, we have the following result for the error on the ghost points. 
\begin{lemma}\label{lemma_e}
Assume that $M$ in \eqref{eqn_e} is row and column-diagonally dominant, then the solution to \eqref{eqn_e} has $\mathcal{O}(n)$ components of $\mathcal{O}(h^{q+2})$, and $\mathcal{O}(1)$ components of $\mathcal{O}(h^{q+1})$.
\end{lemma}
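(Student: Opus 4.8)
The structure of the right-hand side $\mathbf{T}$ in \eqref{eqn_e} is already pinned down in the preceding discussion: after the choice $p=q+1$, $r=q$, we have $\mathbf{T}=\mathcal{O}(h^{q+2})$ componentwise for the entries associated with ghost points in the interior of $\Gamma$, and $\mathbf{T}=\mathcal{O}(h^{q+1})$ on the $\mathcal{O}(1)$ entries corresponding to ghost points near the two edges of the interface. So the task is purely to propagate this componentwise structure of $\mathbf{T}$ through $\mathbf{e}=M^{-1}\mathbf{T}$. The naive bound $\|\mathbf{e}\|_\infty\le |M^{-1}|_\infty\|\mathbf{T}\|_\infty\le C_\infty\,\mathcal{O}(h^{q+1})$ from Lemma \ref{Mr} only gives that \emph{every} component of $\mathbf{e}$ is $\mathcal{O}(h^{q+1})$ — it loses the gain of one order on the $\mathcal{O}(n)$ interior ghost points. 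The plan is therefore to split $\mathbf{T}$ and exploit the sparsity/near-Toeplitz structure of $M$.

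First I would write $\mathbf{T}=\mathbf{T}_1+\mathbf{T}_2$, where $\mathbf{T}_2$ is supported only on the $\mathcal{O}(1)$ edge components and has size $\mathcal{O}(h^{q+1})$ there (and zero elsewhere), while $\mathbf{T}_1$ is $\mathcal{O}(h^{q+2})$ in every component. By linearity $\mathbf{e}=M^{-1}\mathbf{T}_1+M^{-1}\mathbf{T}_2$. For the first piece, Lemma \ref{Mr} gives $\|M^{-1}\mathbf{T}_1\|_\infty\le C_\infty\|\mathbf{T}_1\|_\infty=\mathcal{O}(h^{q+2})$, so \emph{all} components of $M^{-1}\mathbf{T}_1$ are $\mathcal{O}(h^{q+2})$. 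For the second piece I need the key structural fact that the effect of the localized perturbation $\mathbf{T}_2$ decays away from the edges: since $M$ is strictly diagonally dominant with bandwidth independent of $h$ and with interior rows that are translates of a fixed stencil, the columns of $M^{-1}$ corresponding to the edge indices are ``localized'' — their entries decay (geometrically) with distance from the edge. Only $\mathcal{O}(1)$ components of $M^{-1}\mathbf{T}_2$ fail to be $\mathcal{O}(h^{q+2})$; the remaining $\mathcal{O}(n)$ components inherit an extra smallness. Combining, $\mathbf{e}$ has $\mathcal{O}(n)$ components of size $\mathcal{O}(h^{q+2})$ and $\mathcal{O}(1)$ components of size $\mathcal{O}(h^{q+1})$.

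For a cleaner, more self-contained argument that avoids quantifying the decay rate, I would instead argue as follows. By Lemma \ref{Mr} and Lemma \ref{Mc}, $M^{-1}$ is bounded in both $\infty$- and $1$-operator norm uniformly in $h$, hence (by interpolation, or directly) bounded in the $2$-norm as well, with constant independent of $h$. Then $\|\mathbf{e}\|_2\le \|M^{-1}\|_2\,\|\mathbf{T}\|_2$. Now $\|\mathbf{T}\|_2^2 = \sum_{\text{interior}}\mathcal{O}(h^{2q+4}) + \sum_{\text{edge}}\mathcal{O}(h^{2q+2}) = \mathcal{O}(n)\,\mathcal{O}(h^{2q+4}) + \mathcal{O}(1)\,\mathcal{O}(h^{2q+2})$, so $\|\mathbf{T}\|_2=\mathcal{O}(\sqrt n\, h^{q+2})+\mathcal{O}(h^{q+1})$ and hence $\|\mathbf{e}\|_2=\mathcal{O}(\sqrt n\, h^{q+2}) + \mathcal{O}(h^{q+1})$. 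This controls the edge components in aggregate. To upgrade: apply Lemma \ref{Mr} to the interior-supported part $\mathbf{T}_1$ to conclude every component of $M^{-1}\mathbf{T}_1$ is $\mathcal{O}(h^{q+2})$; for $M^{-1}\mathbf{T}_2$ the $\ell^2$ bound gives $\|M^{-1}\mathbf{T}_2\|_2=\mathcal{O}(h^{q+1})$, so at most $\mathcal{O}(1)$ of its components can exceed $\mathcal{O}(h^{q+2})$ in magnitude (if $m$ components were all $\gg h^{q+2}$, say of order $h^{q+1}$, with $m\to\infty$, the $\ell^2$ norm would be $\gg h^{q+1}$). This yields exactly the claimed splitting of $\mathbf e$ into $\mathcal{O}(n)$ components of order $h^{q+2}$ and $\mathcal{O}(1)$ of order $h^{q+1}$.

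The main obstacle is making the sentence ``at most $\mathcal{O}(1)$ components are large'' rigorous without smuggling in a hidden $h$-dependence: one must ensure the number of ``bad'' edge rows of $M$ is a fixed constant (true by construction of the boundary-modified SBP and OP stencils) and that the constants $C_\infty$, $C_1$ are $h$-uniform — which is precisely where the numerically verified strict diagonal dominance of $M^1$ (Figure \ref{diagonally_dom}) and the mesh-independence of its extreme eigenvalues (Table \ref{tab_eigen}) enter, together with the observation that the interior rows of $M$ are repeated so the dominance margin does not degrade as $n\to\infty$. The decay/localization argument in the first variant is the more delicate route; the $\ell^2$ counting argument in the third paragraph is the one I would actually write down.
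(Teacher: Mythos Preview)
Your proposal is correct, but the paper's proof is considerably shorter and more direct. The paper does not split $\mathbf T$, does not invoke any decay or localization of $M^{-1}$, and does not pass through the $\ell^2$ norm. It simply applies Lemma~\ref{Mr} and Lemma~\ref{Mc} straight to $\mathbf e=M^{-1}\mathbf T$: from row dominance, $|\mathbf e|_\infty\le C_\infty|\mathbf T|_\infty=\mathcal O(h^{q+1})$; from column dominance, $|\mathbf e|_1\le C_1|\mathbf T|_1$. The point you did not exploit is that $|\mathbf T|_1=\mathcal O(n)\cdot\mathcal O(h^{q+2})+\mathcal O(1)\cdot\mathcal O(h^{q+1})=\mathcal O(h^{q+1})$ because $n\sim 1/h$, so $|\mathbf e|_1=\mathcal O(h^{q+1})$ as well. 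The counting argument is then immediate: if $m$ components of $\mathbf e$ were of size $\gtrsim h^{q+1}$, the $\ell^1$ bound forces $m=\mathcal O(1)$.

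Your route via the $\ell^2$ norm (obtained by interpolating between $|M^{-1}|_1$ and $|M^{-1}|_\infty$) and the splitting $\mathbf T=\mathbf T_1+\mathbf T_2$ works and even yields a slightly sharper statement on the $M^{-1}\mathbf T_1$ part, but it is extra machinery. Your first variant, relying on geometric decay of the columns of $M^{-1}$, is substantially more than what is needed here. Both your argument and the paper's share the same soft spot: the counting step rigorously bounds the number of components of order $h^{q+1}$ but does not by itself force every remaining component down to $\mathcal O(h^{q+2})$; this is accepted as the informal reading of the lemma, and what is actually used downstream is only the pair of norm bounds.
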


\begin{proof}

By Lemma \eqref{Mr}, we have \revii{$|{\bf e}|_\infty=|M^{-1}{\bf T}|_\infty\leq |M^{-1}|_\infty |{\bf T}|_\infty\leq C_\infty h^{q+1}$}. Similarly, by Lemma \eqref{Mc}, we have  \revii{$|{\bf e}|_1=|M^{-1}{\bf T}|_1\leq |M^{-1}|_1 |{\bf T}|_1\leq C_1 h^{q+1}$}. Therefore, the number of   $\mathcal{O}(h^{q+1})$ components in ${\bf e}$ cannot depend on $n$. 
\end{proof}

By Lemma \ref{lemma_e}, the truncation error of the semidiscretization \eqref{elastic_semi_c} and \eqref{fine_scheme} on $\Gamma$ is  $\mathcal{O}(h^{q-1})$ on $\mathcal{O}(1)$ points, and $\mathcal{O}(h^{q})$ on $\mathcal{O}(n)$ points. This is exactly the truncation error we would like to have for an optimal convergence rate $\min(2q,q+2)$. In the numerical experiments, we confirm that the optimal convergence rate is obtained. 
The conclusion from the above truncation error analysis is that in the semidiscretization, we choose the OP interpolation operator with accuracy $(2q, q+1)$ and the restriction operator with accuracy $(2q,q)$. 


\section{Discretization in time}\label{sec_time}
We discretize in time on a uniform grid $t_n = n\Delta_t, n = 0,1,\cdots$ with stepsize $\Delta_t > 0$ by an explicit fourth order accurate predictor-corrector time integration method \cite{sjogreen2012fourth}. 

\subsection{Predictor-corrector time-stepping algorithm}\label{time_integrator}
The description of the predictor-corrector method can be found in \cite{Zhang2021}. For completeness, we also present the full discretization below. 
Let ${\bf c}^n$ and ${\bf f}^n$ denote the numerical approximations of ${\bf C}({\bf x}, t_n), {\bf x}\in \Omega^c$ and ${\bf F}({\bf x}, t_n), {\bf x}\in \Omega^f$, respectively. \revii{Note that $\bf c$ contains the ghost point values while $\bf f$ does not.} Assuming that ${\bf c}^n, {\bf c}^{n-1}, {\bf f}^n$ and ${\bf f}^{n-1}$ satisfy the discretized interface conditions (\ref{continuous_sol}) and (\ref{continuous_traction}), the procedure of computing ${{\bf c}^{n+1}}$ and ${\bf f}^{n+1}$ is \revii{given in Algorithm \ref{algorithm1} }.
\setalgorithmicfont{\footnotesize}
\begin{algorithm}\label{algorithm1}
	\caption{Predictor-corrector time-stepping algorithm}
	\begin{algorithmic}[1]
		\STATE Compute the numerical solution at the predictor step on the interior grid points
		\begin{align}
		{\bf c}_{\bf i}^{\ast,n+1} &= 2{\bf c}_{\bf i}^{n} - {\bf c}_{\bf i}^{n-1} + \Delta_t^2\big(\mathcal{J}_{\rho}^{2h}\big)^{-1}\wt{\mathcal{L}}_{\bf i}^{2h}{\bf c}^n,\ \ {\bf i}\in I_{\Omega^c},\label{predictor_c}\\
		{\bf f}_{\bf i}^{\ast,n+1} &= 2{\bf f}_{\bf i}^{n} - {\bf f}_{\bf i}^{n-1} + \Delta_t^2\big(\mathcal{J}_{\rho}^h\big)^{-1}\hat{\mathcal{L}}_{\bf i}^{h}{\bf f}^n,\ \ {\bf i}\in I_{\Omega^f}/I_{\Gamma^f}.\label{predictor_f}
		\end{align}
		Note that \eqref{predictor_f} does not include the grid points on the interface. 
		\STATE Compute the predictor ${\bf f}_{\bf i}^{\ast, n+1}$ on the interface by the injection method
		\[{\bf f}_{\bf i}^{\ast, n+1} = \mathcal{P}_{\bf i}({\bf c}^{\ast, n+1}),\ \ {\bf i}\in I_{\Gamma^f},\] 
		which imposes the continuity of solution at time $t=t_{n+1}$.
		\STATE Compute the predictor  ${\wt{\bf c}}^{\ast, n+1}$ on the ghost points  by solving the linear system 
			\begin{equation}\label{ls1}
		\Big(\big(\mathcal{J}_{\Gamma\Lambda}^{2h}\big)^{-1}\wt{\mathcal{A}}_2^{2h}{\bf c}^{\ast,n+1}\Big)_{\bf i} = \mathcal{R}_{\bf i}\Big(\big(\mathcal{J}_{\Gamma\Lambda}^h\big)^{-1}\big({\mathcal{A}}_2^{h}{\bf f}^{\ast,n+1} - h_2\omega_1{\bm \eta}^{\ast, n+1}\big)\Big),\ \ {\bf i}\in I_{\Gamma^c},
		\end{equation}
		which imposes the continuity of traction. This completes the predictor step. 
		\STATE Evaluate the acceleration at all grid points
		\[{\bf a}_c^n = \frac{{\bf c}^{\ast, n+1} - 2{\bf c}^n + {\bf c}^{n-1}}{\Delta_t^2},\ \ {\bf a}_f^n = \frac{{\bf f}^{\ast, n+1} - 2{\bf f}^n + {\bf f}^{n-1}}{\Delta_t^2}.\]
		\STATE Compute the corrector on  the interior grid points
		\begin{align}
		{\bf c}_{\bf i}^{n+1} &= {\bf c}_{\bf i}^{\ast, n+1} + \frac{\Delta_t^4}{12}\big(\mathcal{J}_\rho^{2h}\big)^{-1}\wt{\mathcal{L}}_{\bf i}^{2h}{\bf a}_c^n,\ \ {\bf i}\in I_{\Omega^c},\label{corrector_c}\\
		{\bf f}_{\bf i}^{n+1} &= {\bf f}_{\bf i}^{\ast, n+1} + \frac{\Delta_t^4}{12}\big(\mathcal{J}_\rho^h\big)^{-1}\hat{\mathcal{L}}_{\bf i}^{h}{\bf a}_f^n,\ \ {\bf i}\in I_{\Omega^f}/I_{\Gamma^f}.\label{corrector_f}
		\end{align}
		\STATE The corrector ${\bf f}_{\bf i}^{n+1}$ on the interface are computed by using the continuity of the solution
		\begin{equation*}
		{\bf f}_{\bf i}^{n+1} = \mathcal{P}_{\bf i}({\bf c}^{n+1}),\ \ {\bf i} \in I_{\Gamma^f}.
		\end{equation*}
		\STATE The corrector ${{\bf c}}^{n+1}$ on the ghost points are computed by using the continuity of traction
		\begin{equation}\label{ls2}
		\Big(\big(\mathcal{J}_{\Gamma\Lambda}^{2h}\big)^{-1}\wt{\mathcal{A}}_2^{2h}{\bf c}^{n+1}\Big)_{\bf i} = \mathcal{R}_{\bf i}\Big(\big(\mathcal{J}_{\Gamma\Lambda}^h\big)^{-1}\big({\mathcal{A}}_2^{h}{\bf f}^{n+1} - h_2\omega_1{\bm \eta}^{n+1}\big)\Big),\ \ {\bf i}\in I_{\Gamma^c}.		\end{equation}
		This completes the corrector step. 
	\end{algorithmic}
\end{algorithm}

At each time step, two systems of linear equations \eqref{ls1} and \eqref{ls2} must be solved. In matrix form, they have the same coefficient matrix \revi{of size $n_1^{2h}\times n_1^{2h}$. The linear systems can be solved efficiently by using LU factorization. For problems in three space dimension, the size of the linear system is $n_1^{2h} n_2^{2h}\times n_1^{2h} n_2^{2h}$, and is often solved by iterative methods. }

\subsection{Fully discrete stability analysis}\label{stability}
In this section, we prove fully discrete stability for the discretization in Sec.~\ref{time_integrator}. More precisely, we show that a fully discrete energy is conserved. 
\subsubsection{Preliminary} 
We start by presenting important properties of the discrete operators that will be used in the stability analysis. First, we need to define discrete inner products for the grid functions on the interface. Let ${\bf u}_{{\Gamma}h}$ and ${\bf v}_{{\Gamma}h}$ be vectors of length $2n_1^{h}$ (the factor $2$ here is due to the spatial dimension), we define the weighted discrete inner product 
\begin{equation}\label{scalar_product_discrete_interface_f}
\left<{\bf u}_{{\Gamma}h}, {\bf v}_{{\Gamma}h}\right>_{hw} = h_1 {\bf u}^T_{{\Gamma}h} \mathcal{W}_\Gamma^{h}\mathcal{J}_{\Gamma\Lambda}^{h} {\bf v}_{{\Gamma}h},
\end{equation}
where  $\mathcal{W}_\Gamma^{h} := W_\Gamma^{h}\otimes{\bf I}$, and the diagonal matrix $W_\Gamma^{h}$ of size $n_1^h\times n_1^h$ contains weights that are the same as the weights in the inner product associated with the SBP operators introduced in \eqref{iph}. The matrix ${\bf I}$ is again the 2-by-2 diagonal matrix. Similarly, we also define a discrete inner product on the interface in the coarse domain 
\begin{equation}\label{scalar_product_discrete_interface_c}
\left<{\bf u}_{{\Gamma}2h}, {\bf v}_{{\Gamma}2h}\right>_{2hw} = 2h_1 {\bf u}^T_{{\Gamma}2h} \mathcal{W}_\Gamma^{2h}\mathcal{J}_{\Gamma\Lambda}^{2h} {\bf v}_{{\Gamma}2h},
\end{equation}
for vectors ${\bf u}_{{\Gamma}2h}$ and ${\bf v}_{{\Gamma}2h}$ of length $2n_1^{2h}$. The diagonal \revi{matrix}  $\mathcal{W}_\Gamma^{2h}$ is defined in the same way as $\mathcal{W}_\Gamma^{h}$.

Next, we present the lemma for the compatibility between the scaled interpolation and the scaled restriction operators defined in Sec.~\ref{sec_semidiscretization}.

\begin{lemma}\label{lemma1}
If the interpolation operator $P$ and restriction operator $R$ satisfy the compatibility condition \eqref{PRcomp}, 
	then   the scaled interpolation operator $\mathcal{P}$ in (\ref{scaleP}) and the scaled restriction operator $\mathcal{R}$ in (\ref{scaleR}) satisfy
	\begin{equation}\label{pr_relation}
	\left<\mathcal{P} {\bf u}_{{\Gamma}2h}, {\bf u}_{{\Gamma}h}\right>_{hw} = \left<{\bf u}_{{\Gamma}2h}, \mathcal{R}{\bf u}_{{\Gamma}h}\right>_{2hw},
	\end{equation}
for any vectors ${\bf u}_{{\Gamma}h}$ and ${\bf u}_{{\Gamma}2h}$ of length $2n_1^h$ and $2n_1^{2h}$, respectively. 
\end{lemma}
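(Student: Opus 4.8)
The plan is to unwind the definitions of the scaled operators and the weighted interface inner products, reduce the claimed identity \eqref{pr_relation} to the unscaled compatibility condition \eqref{PRcomp}, and check that all the diagonal scaling factors cancel correctly. First I would write out the left-hand side of \eqref{pr_relation} using \eqref{scalar_product_discrete_interface_f} and the definition \eqref{scaleP} of $\mathcal{P}$:
\[
\left<\mathcal{P}{\bf u}_{\Gamma 2h},{\bf u}_{\Gamma h}\right>_{hw}
= h_1\,{\bf u}_{\Gamma 2h}^T\,(\mathcal{J}_{\Gamma\Lambda}^{2h})^{\frac12}(\Proj\otimes{\bf I})^T(\mathcal{J}_{\Gamma\Lambda}^{h})^{-\frac12}\,\mathcal{W}_\Gamma^{h}\,\mathcal{J}_{\Gamma\Lambda}^{h}\,{\bf u}_{\Gamma h}.
\]
Since $\mathcal{J}_{\Gamma\Lambda}^{h}$ and $\mathcal{W}_\Gamma^{h}$ are diagonal (hence commuting and symmetric), the product $(\mathcal{J}_{\Gamma\Lambda}^{h})^{-\frac12}\mathcal{W}_\Gamma^{h}\mathcal{J}_{\Gamma\Lambda}^{h}$ simplifies to $\mathcal{W}_\Gamma^{h}(\mathcal{J}_{\Gamma\Lambda}^{h})^{\frac12}$, so the middle factor collapses to $(\Proj\otimes{\bf I})^T\,\mathcal{W}_\Gamma^{h}\,(\mathcal{J}_{\Gamma\Lambda}^{h})^{\frac12}$. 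The factor $(\mathcal{J}_{\Gamma\Lambda}^{h})^{\frac12}$ acting on ${\bf u}_{\Gamma h}$ can be absorbed by defining an auxiliary grid function $\tilde{\bf u}_{\Gamma h}:=(\mathcal{J}_{\Gamma\Lambda}^{h})^{\frac12}{\bf u}_{\Gamma h}$, and similarly $\tilde{\bf u}_{\Gamma 2h}:=(\mathcal{J}_{\Gamma\Lambda}^{2h})^{\frac12}{\bf u}_{\Gamma 2h}$.

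With this substitution the left-hand side becomes $h_1\,\tilde{\bf u}_{\Gamma 2h}^T(\Proj\otimes{\bf I})^T\mathcal{W}_\Gamma^{h}\tilde{\bf u}_{\Gamma h}$, which is exactly $(\tilde{\bf u}_{\Gamma h},(\Proj\otimes{\bf I})\tilde{\bf u}_{\Gamma 2h})_{hw1}$ in the notation of \eqref{iph} (tensored with ${\bf I}$ to account for the two displacement components, which factors out componentwise). Doing the same computation on the right-hand side of \eqref{pr_relation} using \eqref{scalar_product_discrete_interface_c} and \eqref{scaleR}, the factors $2h_1$, $(\mathcal{J}_{\Gamma\Lambda}^{2h})^{\pm\frac12}$ and $\mathcal{W}_\Gamma^{2h}$ combine in the analogous way to give $2h_1\,\tilde{\bf u}_{\Gamma 2h}^T\mathcal{W}_\Gamma^{2h}(\R\otimes{\bf I})\tilde{\bf u}_{\Gamma h}=(\tilde{\bf u}_{\Gamma 2h},(\R\otimes{\bf I})\tilde{\bf u}_{\Gamma h})_{2hw1}$. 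The identity \eqref{pr_relation} then follows immediately from the compatibility condition \eqref{PRcomp} applied to $\bs v_f=\tilde{\bf u}_{\Gamma h}$ and $\bs v_c=\tilde{\bf u}_{\Gamma 2h}$ (and the observation that tensoring with ${\bf I}$ preserves \eqref{PRcomp}, since it acts blockwise on the two components).

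I do not expect a serious obstacle here — the proof is essentially bookkeeping. The one point that needs care is verifying that the scaling matrices $\mathcal{J}_{\Gamma\Lambda}^{h}$, $\mathcal{J}_{\Gamma\Lambda}^{2h}$ and the weight matrices $\mathcal{W}_\Gamma^{h}$, $\mathcal{W}_\Gamma^{2h}$ are all diagonal and positive, so that their square roots are well-defined, real, and commute with each other and with $\mathcal{W}_\Gamma$; this is where the positivity of the Jacobians $J^c,J^f$, of $\Lambda^{c},\Lambda^{f}$, and of the SBP weights $\omega_j\ge\delta>0$ enters. The other mild subtlety is keeping the fine/coarse roles straight: $\mathcal{P}$ maps coarse-to-fine so it is weighted against the fine-grid inner product $\left<\cdot,\cdot\right>_{hw}$, while $\mathcal{R}$ maps fine-to-coarse and is weighted against $\left<\cdot,\cdot\right>_{2hw}$, matching exactly the pattern of \eqref{PRcomp}. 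Once the change of variables $\tilde{\bf u}=(\mathcal{J}_{\Gamma\Lambda})^{1/2}{\bf u}$ is made on both grids, everything reduces to \eqref{PRcomp} and the proof is complete.
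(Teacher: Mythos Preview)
Your proposal is correct and follows essentially the same route as the paper: both proofs expand the definitions of the weighted inner products and the scaled operators, use the fact that the diagonal matrices $\mathcal{J}_{\Gamma\Lambda}^{h}$, $\mathcal{J}_{\Gamma\Lambda}^{2h}$, $\mathcal{W}_\Gamma^{h}$, $\mathcal{W}_\Gamma^{2h}$ commute so that the $(\mathcal{J}_{\Gamma\Lambda})^{\pm 1/2}$ factors collapse, and then invoke the compatibility condition \eqref{PRcomp} (equivalently, $h_1 P^T W_\Gamma^h = 2h_1 W_\Gamma^{2h} R$). Your introduction of the auxiliary vectors $\tilde{\bf u}_{\Gamma h}$ and $\tilde{\bf u}_{\Gamma 2h}$ is a minor cosmetic difference that makes the reduction to \eqref{PRcomp} slightly more explicit, but the underlying argument is identical.
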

\begin{proof}
	From \eqref{scalar_product_discrete_interface_f}--\eqref{scalar_product_discrete_interface_c}, the definitions of $\mathcal{P}$ in \eqref{scaleP} and $\mathcal{R}$ in \eqref{scaleR}, we have
	\begin{align*}
	\left<\mathcal{P}{\bf u}_{{\Gamma}2h}, {\bf u}_{{\Gamma}h}\right>_{hw} &=  h_1\left[\left((\mathcal{J}_{\Gamma\Lambda}^h)^{\frac{1}{2}}({\Proj}\otimes{\bf I})(\mathcal{J}_{\Gamma\Lambda}^{2h})^{\frac{1}{2}}\right){\bf u}_{{\Gamma}2h}\right]^T\mathcal{W}_{\Gamma}^h{\bf u}_{{\Gamma}h}\\
	& = 2h_1 {\bf u}_{{\Gamma}2h}^T  \left[\left((\mathcal{J}_{\Gamma\Lambda}^{2h})^{\frac{1}{2}}\frac{1}{2}({\Proj}^T\otimes{\bf I})(\mathcal{J}_{\Gamma\Lambda}^h)^{\frac{1}{2}}\right)\mathcal{W}_{\Gamma}^h{\bf u}_{{\Gamma}h}\right]\\
	& = 2h_1{\bf u}_{{\Gamma}2h}^T  \left[\left((\mathcal{J}_{\Gamma\Lambda}^{2h})^{\frac{1}{2}}\frac{1}{2}({\Proj}^TW_\Gamma^h\otimes{\bf I})(\mathcal{J}_{\Gamma\Lambda}^h)^{\frac{1}{2}}\right){\bf u}_{{\Gamma}h}\right]\\
	& =2h_1 {\bf u}_{{\Gamma}2h}^T \left[\left((\mathcal{J}_{\Gamma\Lambda}^{2h})^{\frac{1}{2}}({W}^{2h}_\Gamma {\R}\otimes {\bf I})(\mathcal{J}_{\Gamma\Lambda}^h)^{\frac{1}{2}}\right) {\bf u}_{{\Gamma}h}\right] = \left<{\bf u}_{{\Gamma}2h}, \mathcal{R}{\bf u}_{{\Gamma}h}\right>_{2hw}.
	\end{align*}
\end{proof}

\revii{
\begin{remark}\label{remark1}
	 At every time level, the interface conditions (\ref{continuous_sol}) and (\ref{continuous_traction}) are imposed, and  Lemma \ref{lemma1} holds for the numerical solution. 
\end{remark}
}
Finally, we present an important property of the spatial difference operators. For this, we also need discrete inner products for the grid functions in $\Omega^c$ and $\Omega^f$. Let ${\bf u}_{2h}$ and ${\bf v}_{2h}$ be grid functions in the coarse domain $\Omega^c$ of length $2n_1^{2h}n_2^{2h}$, we define the two dimensional discrete inner product in $\Omega^c$ as
\begin{equation}\label{inner_product_c}
({\bf v}_{2h}, {\bf u}_{2h})_{2hw} = 4h_1h_2{\bf v}_{2h}^T\mathcal{W}^{2h}\mathcal{J}^{2h}{\bf u}_{2h},
\end{equation} 
where $\mathcal{J}^{2h} := J^{2h}\otimes{\bf I}$, $\mathcal{W}^{2h} := W^{2h}\otimes{\bf I}$ with $W^{2h} = W^{2h}_x\otimes W^{2h}_y$. The diagonal matrices $W_x^{2h}$ and $W_y^{2h}$ are the same as $W^h$ in \eqref{iph} for the $x$ and $y$ coordinate. Note that in our case, we have  $W_x^{2h} \equiv W_\Gamma^{2h}$. The corresponding discrete inner product in $\Omega^f$ is defined analogously as
\begin{equation}\label{inner_product_f}
({\bf v}_h, {\bf u}_h)_{hw} = h_1h_2{\bf v}_h^T\mathcal{W}^h\mathcal{J}^h{\bf u}_h.
\end{equation} 
Using the SBP properties \eqref{sbp_dx}, \eqref{sbp_dxx_gp} and \eqref{sbp_dxx}, by a similar derivation as in \cite{petersson2015wave, Zhang2021}, we have
\begin{equation}\label{sbp_c}
\left({\bf v}_{2h}, (\mathcal{J}^{2h})^{-1}\wt{\mathcal{L}}^{2h}{\bf u}_{2h}\right)_{2hw} = -\mathcal{S}_{2h}({\bf v}_{2h}, {\bf u}_{2h}) + B_{2h}({\bf v}_{2h}, {\bf u}_{2h}),
\end{equation}
where $\mathcal{S}_{2h}({\bf v}_{2h}, {\bf u}_{2h})$ is a symmetric, positive semi-definite bilinear form given in Appendix \ref{definitions}. The term corresponding to the interface contribution is 
\begin{equation}\label{B2h}
B_{2h}({\bf v}_{2h}, {\bf u}_{2h}) = 2h_1\sum_{{\bf i}\in I_{\Gamma^c}} (\mathcal{W}^{2h}{\bf v}_{2h})_{\bf i}\cdot\left(\wt{\mathcal{A}}_2^{2h}{\bf u}_{2h}\right)_{\bf i} \revii{ = \left<{\bf v}_{2h}, \left(\mathcal{J}_{\Gamma\Lambda}^{2h}\right)^{-1}\widetilde{\mathcal{A}}_2^{2h}{\bf u}_{2h}\right>_{2hw}}.
\end{equation}
Correspondingly, in the coarse domain we have
\begin{equation}\label{sbp_f}
\left({\bf v}_h, (\mathcal{J}^h)^{-1}\hat{\mathcal{L}}^{h}{\bf u}_h\right)_{hw} = -\mathcal{S}_{h}({\bf v}_h, {\bf u}_h) + B_{h}({\bf v}_h, {\bf u}_h) + h_2\omega_1\sum_{{\bf i}\in\Gamma^f} (\mathcal{W}^h{\bf v}_h)_{\bf i}\cdot{\bm \eta}_{\bf i},
\end{equation}
where the interface contribution is 
\begin{equation}\label{Bh}
B_{h}({\bf v}_h, {\bf u}_h) = -h_1\sum_{{\bf i}\in I_{\Gamma^f}} (\mathcal{W}^h{\bf v}_h)_{\bf i}\cdot\left({\mathcal{A}}_2^{h}{\bf u}_h\right)_{\bf i} \revii{= -\left<{\bf v}_{h}, \left(\mathcal{J}_{\Gamma\Lambda}^{2h}\right)^{-1}{\mathcal{A}}_2^{h}{\bf u}_{h}\right>_{hw}},
\end{equation}
and $\mathcal{S}_{h}({\bf v}_h, {\bf u}_h)$ is also a symmetric, positive semi-definite bilinear form given in Appendix \ref{definitions}. Note that in \eqref{sbp_c} and (\ref{sbp_f}), we have omitted terms corresponding to physical boundary conditions. 

For the symmetric, positive semi-definite bilinear forms $\mathcal{S}_h$ and $\mathcal{S}_{2h}$, there exist square matrices $K_{h}$ and $K_{2h}$ such that
\begin{equation}\label{K}
\mathcal{S}_h({\bf v}_h, {\bf u}_h) = \left({\bf v}_h, \varrho^hK_h{\bf u}_h\right)_{hw} \ \ \mbox{and}\ \ \mathcal{S}_{2h}({\bf v}_{2h}, {\bf u}_{2h}) = \left({\bf v}_{2h}, \varrho^{2h}K_{2h}{\bf u}_{2h}\right)_{2hw},
\end{equation}
where $\varrho^h = \rho^h\otimes{\bf I}$ and $\varrho^{2h} = \rho^{2h}\otimes{\bf I}$. Since the above inner products are weighted, the matrices $K_h$ and $K_{2h}$ are not symmetric, but they satisfy the relations
\begin{align}
\left({\bf v}_h, \varrho^hK_h{\bf u}_h\right)_{hw} = \left({\bf u}_h, \varrho^hK_h{\bf v}_h\right)_{hw} ,\ \ \left({\bf v}_h, \varrho^hK_h{\bf v}_h\right)_{hw} \geq 0,\label{kh}\\
\left({\bf v}_{2h}, \varrho^{2h}K_{2h}{\bf u}_{2h}\right)_{2hw} = ({\bf u}_{2h}, \varrho^{2h}K_{2h}{\bf v}_{2h})_{2hw},\ \ \left({\bf v}_{2h}, \varrho^{2h}K_{2h}{\bf v}_{2h}\right)_{2hw} \geq 0.\label{k2h}
\end{align}
In addition, the scaled matrices 
\begin{equation}\label{kbar}
\bar{K}_h = (\mathcal{W}_\rho^h)^{1/2}K_h(\mathcal{W}_\rho^h)^{-1/2},\ \ \bar{K}_{2h} = (\mathcal{W}_\rho^{2h})^{1/2}K_{2h}(\mathcal{W}_\rho^{2h})^{-1/2},
\end{equation}
are symmetic,  
where $\mathcal{W}_\rho^h := (\rho^h W^h)\otimes{\bf I}$ and $\mathcal{W}_\rho^{2h} := (\rho^{2h} W^{2h})\otimes{\bf I}$. The scaled matrices are also positive semi-definite  because of the relations
\begin{align*}
\left({\bf u}_h, \varrho^hK_h{\bf v}_h\right)_{hw} &= \left((\mathcal{W}_\rho^h)^{1/2}{\bf u}_h, \bar{K}^h(\mathcal{W}_\rho^h)^{1/2}{\bf v}_h\right)_h,\\ 
\left({\bf u}_{2h}, \varrho^{2h} K_{2h} {\bf v}_{2h}\right)_{2hw} &= \left((\mathcal{W}_\rho^{2h})^{1/2}{\bf u}_{2h}, \bar{K}^{2h}(\mathcal{W}_\rho^{2h})^{1/2}{\bf v}_{2h}\right)_{2h},
\end{align*}
where $(\cdot,\cdot)_h$ and $(\cdot,\cdot)_{2h}$ are the standard discrete $L_2$ inner product. 

\subsubsection{Energy conservation} In this subsection, we present the main result on fully discrete stability. We first introduce fully discrete energy and prove that it is always nonnegative under a CFL condition on the time step. After that, we prove that the fully discrete energy is conserved.

To clearly present the contents, we introduce the difference operators for the first and second temporal derivatives
\[D_t {\bf c}^n := \frac{{\bf c}^{n+1} - {\bf c}^n}{\Delta_t}, \ D_t^2 {\bf c}^n := \frac{{\bf c}^{n+1} - 2{\bf c}^n + {\bf c}^{n-1}}{\Delta_t^2}. \]
We also use the convention
\begin{equation}\label{Dt2}
D_t^2 {\bf c}^{\ast, n} := \frac{{\bf c}^{\ast, n+1} - 2{\bf c}^n + {\bf c}^{n-1}}{\Delta_t^2},
\end{equation}
and the notation
\[\delta {\bf c}^n := {\bf c}^{n+1} - {\bf c}^{n-1}.\]
The operators $D_t$, $D_t^2$ and $\delta$ can be applied to ${\bf f}$ in the same way. The following theorem defines the fully discrete energy. 

\begin{theorem}\label{energy_pos}
	The fully discrete energy defined by
	\begin{multline}\label{discrete_energy}
	E^{n+1/2} =	\left((\varrho^h)^{\frac{1}{2}}D_t{\bf f}^n, (\varrho^h)^{\frac{1}{2}}D_t{\bf f}^n\right)_{hw}+ \left((\varrho^{2h})^{\frac{1}{2}}D_t{\bf c}^n, (\varrho^{2h})^{\frac{1}{2}}D_t{\bf c}^n\right)_{2hw} +  \mathcal{S}_{2h}({\bf c}^{n+1}\!, {\bf c}^{n}\!)\\+ \mathcal{S}_h({\bf f}^{n+1}, {\bf f}^{n})  - \frac{\Delta_t^2}{12}\left((\mathcal{J}^h)^{-1}\hat{\mathcal{L}}^h{\bf f}^{n+1}, (\mathcal{J}_\rho^h)^{-1}\hat{\mathcal{L}}^h{\bf f}^{n}\right)_{hw} 
	\!\!- \frac{\Delta_t^2}{12}\left(\!(\mathcal{J}^{2h})^{-1}\wt{\mathcal{L}}^{2h}{\bf c}^{n+1}, (\mathcal{J}_\rho^{2h})^{-1}\wt{\mathcal{L}}^{2h}{\bf c}^{n}\!\right)_{2hw}.
	\end{multline}
	 is nonnegative under the time step restriction 
	 \begin{equation}\label{timesteprestriction}
	 \Delta_t\leq \min\left\{\frac{2\sqrt{3}}{\max_j\sqrt{\kappa_j^h}},
	 \frac{2\sqrt{3}}{\max_j\sqrt{\kappa_j^{2h}}}\right\},
	 \end{equation}
	 where $\kappa_j^h$ and $\kappa_j^{2h}$ are the $j$-th eigenvalue of $\bar{K}_h$ and $\bar{K}_{2h}$ defined in (\ref{kbar}), respectively.
\end{theorem}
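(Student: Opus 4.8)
The plan is to show that each of the six terms in $E^{n+1/2}$ is nonnegative, or that the (potentially indefinite) terms can be absorbed into the manifestly nonnegative ones under the time step restriction \eqref{timesteprestriction}. The first two kinetic terms are squared norms and hence nonnegative without any condition. The delicate terms are the potential-energy parts $\mathcal{S}_{2h}({\bf c}^{n+1},{\bf c}^n)$, $\mathcal{S}_h({\bf f}^{n+1},{\bf f}^n)$ and the two correction terms involving $\Delta_t^2/12$, because they all involve a product of the solution (or its spatial operator) at two different time levels $n$ and $n+1$, which is not a priori sign-definite. The standard device, going back to the analysis of the predictor–corrector scheme in \cite{sjogreen2012fourth}, is to use the polarization-type identity
\[
({\bf a},{\bf b}) = \tfrac14\left(\|{\bf a}+{\bf b}\|^2 - \|{\bf a}-{\bf b}\|^2\right)
\]
to rewrite each cross term, and then recognize that the difference ${\bf c}^{n+1}-{\bf c}^n = \Delta_t D_t {\bf c}^n$ is small, so the negative contribution is controlled by $\Delta_t^2$ times a kinetic quantity.

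\textbf{Key steps.} First I would use \eqref{K} to write $\mathcal{S}_{2h}({\bf c}^{n+1},{\bf c}^n) = ({\bf c}^{n+1}, \varrho^{2h}K_{2h}{\bf c}^n)_{2hw}$ and similarly for $\mathcal{S}_h$, and pass to the symmetrized, positive semidefinite matrices $\bar K_h, \bar K_{2h}$ of \eqref{kbar} via the scalings $(\mathcal{W}_\rho)^{1/2}$; this makes everything expressible in the standard $\ell^2$ inner product with symmetric positive semidefinite operators. Second, I would combine the potential terms with the $\Delta_t^2/12$ correction terms: using the semidiscrete relations \eqref{sbp_c}–\eqref{sbp_f} one sees that $(\mathcal{J}^{-1}\hat{\mathcal{L}}){\bf f}$ is, up to boundary/interface terms that will cancel, essentially $-K{\bf f}$, so the correction term contributes $+\frac{\Delta_t^2}{12}(K_h{\bf f}^{n+1}, K_h{\bf f}^n)$-type quantities. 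Third, apply the polarization identity to each cross term and collect: schematically,
\[
\mathcal{S}_h({\bf f}^{n+1},{\bf f}^n) - \tfrac{\Delta_t^2}{12}(\cdots) = \tfrac14\Big\langle ({\bf f}^{n+1}+{\bf f}^n), \bar K_h({\bf f}^{n+1}+{\bf f}^n)\Big\rangle - \tfrac{\Delta_t^2}{4}\Big\langle (\bar K_h)^{1/2} D_t{\bf f}^n, (\text{something})\Big\rangle + \dots,
\]
where the first piece is nonnegative (since $\bar K_h\geq 0$) and the second piece is bounded by $\frac{\Delta_t^2}{4}\max_j\kappa_j^h\, \|(\varrho^h)^{1/2}D_t{\bf f}^n\|^2_{hw}$. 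Fourth, demand that this negative contribution be dominated by the kinetic term $\|(\varrho^h)^{1/2}D_t{\bf f}^n\|^2_{hw}$ already present in $E^{n+1/2}$ (and likewise for the coarse domain), which yields exactly the bound $\Delta_t^2\max_j\kappa_j^h\leq 12$, i.e. \eqref{timesteprestriction}. One must be careful to track the interface terms $B_h, B_{2h}$ and the $h_2\omega_1{\bm\eta}$ contribution in \eqref{sbp_f}: by Lemma \ref{lemma1}, Remark \ref{remark1}, and the correction term in \eqref{continuous_traction}, these cancel pairwise between the two domains, so no uncontrolled surface term survives in the energy.

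\textbf{Main obstacle.} The routine but delicate part is the bookkeeping that turns the six-term expression into a sum of a genuine squared norm plus $\left(1 - \frac{\Delta_t^2}{12}\kappa_{\max}\right)$ times a kinetic norm; getting the constant $2\sqrt{3}$ (equivalently $\Delta_t^2\kappa_{\max}\leq 12$) exactly right requires keeping careful track of the $\frac1{12}$ from the corrector and the $\frac14$ from polarization, and of how $D_t^2{\bf c}^{\ast,n}$ in \eqref{Dt2} relates to $D_t{\bf c}^n$ and $D_t{\bf c}^{n-1}$. The conceptual obstacle, which I expect to be where the argument really has to be careful, is verifying that the interface and ghost-point terms truly cancel: this is where the scaled compatibility identity \eqref{pr_relation}, the particular form of ${\bm\eta}$ in \eqref{eta}, and the deliberate correction term $-h_2\omega_1{\bm\eta}$ in the traction condition \eqref{continuous_traction} all have to conspire. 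Once the cancellation is established, nonnegativity of $E^{n+1/2}$ under \eqref{timesteprestriction} follows from the eigenvalue bound on $\bar K_h,\bar K_{2h}$, and the subsequent (separate) claim of energy conservation will follow by an analogous telescoping computation using the corrector step.
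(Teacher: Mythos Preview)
Your overall strategy---polarize into quadratic forms in ${\bf f}^{n+1}\pm{\bf f}^n$ and ${\bf c}^{n+1}\pm{\bf c}^n$, pass to the symmetric matrices $\bar K_h,\bar K_{2h}$ via \eqref{kbar}, and verify that the interface contributions $B_h,B_{2h}$ and the ${\bm\eta}$ term cancel through Lemma~\ref{lemma1} and the corrected traction condition \eqref{continuous_traction}---is exactly the route taken in Appendix~\ref{positive_energy}. Your instinct about the interface cancellation is right, and in the paper this cancellation must in fact be invoked \emph{three} times (once for the $\mathcal S$ terms and twice for the corrector terms, the latter requiring that the predictor increments $D_t^2{\bf c}^{*,n}$ and $D_t^2{\bf f}^{*,n}$ themselves satisfy the discrete interface conditions).

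There is, however, a genuine gap in your account of where the constant $12$ comes from. You claim the restriction arises by absorbing the negative piece $-\tfrac14\langle \Delta{\bf f},\bar K_h\,\Delta{\bf f}\rangle$ (with $\Delta{\bf f}={\bf f}^{n+1}-{\bf f}^n=\Delta_t D_t{\bf f}^n$) into the kinetic term. But that estimate gives only $\tfrac{\Delta_t^2}{4}\kappa_{\max}\le 1$, i.e.\ $\Delta_t^2\kappa_{\max}\le 4$, not $12$; your own arithmetic does not close. The actual mechanism is different. After full polarization (including the corrector term, which after two applications of \eqref{sbp_c}--\eqref{sbp_f} contributes $\bar K^2$ rather than $\bar K$), the energy becomes, for each domain,
\[
\Big\langle \Delta{\bf f},\ \big(\tfrac{1}{\Delta_t^2}I-\tfrac14\bar K_h+\tfrac{\Delta_t^2}{48}\bar K_h^2\big)\Delta{\bf f}\Big\rangle
\;+\;\Big\langle \Sigma{\bf f},\ \big(\tfrac14\bar K_h-\tfrac{\Delta_t^2}{48}\bar K_h^2\big)\Sigma{\bf f}\Big\rangle,
\]
with $\Sigma{\bf f}={\bf f}^{n+1}+{\bf f}^n$. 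The $\Delta{\bf f}$ form is \emph{unconditionally} nonnegative: as a quadratic in each eigenvalue $\kappa\ge 0$ it has discriminant $\tfrac1{16}-4\cdot\tfrac{1}{\Delta_t^2}\cdot\tfrac{\Delta_t^2}{48}=\tfrac1{16}-\tfrac1{12}<0$. The $+\tfrac{\Delta_t^2}{48}\bar K_h^2$ contribution from the corrector is essential here and cannot be thrown away. The time-step restriction comes entirely from the $\Sigma{\bf f}$ form: requiring $\tfrac14\kappa-\tfrac{\Delta_t^2}{48}\kappa^2\ge 0$ for every eigenvalue forces $\Delta_t^2\kappa_{\max}\le 12$. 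In short, the bound \eqref{timesteprestriction} is the condition that the positive $\mathcal S$ contribution on the \emph{sum} dominate the negative corrector contribution on the sum; the kinetic term never has to absorb anything.
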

\begin{proof}
	The proof is given in Appendix \ref{positive_energy}.
\end{proof}

It is important to note that the time step restriction is determined by the spectral radius of $\bar{K}_h$ and $\bar{K}_{2h}$, which are related to the bilinear forms $\mathcal{S}_{h}$ and $\mathcal{S}_{2h}$ in Appendix \ref{definitions}, but not related to the interpolation and restriction operators used for imposing the numerical interface conditions. This fact leads to a favorable time step restriction.


\begin{theorem}\label{energy_conserve_th}
	Consider the full discretization in Sec.~\ref{time_integrator} and the fully discrete energy in (\ref{discrete_energy}). Under the time step restriction \eqref{timesteprestriction}, we have the energy conservation
	\[E^{n+1/2} = E^{n-1/2}.\]
\end{theorem}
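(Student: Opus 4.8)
The plan is to show that the fully discrete energy $E^{n+1/2}$ equals $E^{n-1/2}$ by forming an appropriate inner-product combination of the discrete equations of motion and showing all terms telescope or cancel. First I would write the corrector updates \eqref{corrector_c}--\eqref{corrector_f} together with the predictor relations in the compact form $\mathcal{J}_\rho^{2h} D_t^2{\bf c}^n = \wt{\mathcal{L}}^{2h}\big({\bf c}^n + \tfrac{\Delta_t^2}{12}(\mathcal{J}_\rho^{2h})^{-1}\wt{\mathcal{L}}^{2h}{\bf a}_c^n\big)$ and similarly for ${\bf f}$, using $D_t^2{\bf c}^{\ast,n}=\mathbf{a}_c^n$. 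The key manipulation is the standard one for leap-frog-type schemes: take the inner product of the equation of motion at time $t_n$ with $\delta{\bf c}^n = {\bf c}^{n+1}-{\bf c}^{n-1} = \Delta_t(D_t{\bf c}^n + D_t{\bf c}^{n-1})$ in the $(\cdot,\cdot)_{2hw}$ inner product, do the same in $\Omega^f$ with $\delta{\bf f}^n$ in $(\cdot,\cdot)_{hw}$, and add the two. The left-hand sides produce $\big((\varrho^{2h})^{1/2}D_t{\bf c}^n,\cdot\big)_{2hw} - \big((\varrho^{2h})^{1/2}D_t{\bf c}^{n-1},\cdot\big)_{2hw}$ type differences — precisely the kinetic part of $E^{n+1/2}-E^{n-1/2}$.

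Next I would handle the right-hand side. Applying \eqref{sbp_c} and \eqref{sbp_f} turns $\big(\delta{\bf c}^n, (\mathcal{J}^{2h})^{-1}\wt{\mathcal{L}}^{2h}{\bf c}^n\big)_{2hw}$ into $-\mathcal{S}_{2h}(\delta{\bf c}^n,{\bf c}^n) + B_{2h}(\delta{\bf c}^n,{\bf c}^n)$ and likewise in the fine domain, where the fine-domain identity additionally carries the $h_2\omega_1\sum_{\Gamma^f}(\mathcal{W}^h\,\delta{\bf f}^n)_{\bf i}\cdot{\bm\eta}_{\bf i}$ term. The bilinear-form pieces $\mathcal{S}_{2h}(\delta{\bf c}^n,{\bf c}^n) = \mathcal{S}_{2h}({\bf c}^{n+1},{\bf c}^n) - \mathcal{S}_{2h}({\bf c}^n,{\bf c}^{n-1})$ telescope into the potential part of the energy difference (using symmetry of $\mathcal{S}_{2h}$). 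The $\Delta_t^2/12$ corrector terms, when paired with $D_t^2$ and the identity $\delta = \Delta_t^2 D_t^2 \cdot(\text{sum of }D_t)$, generate exactly the two $-\tfrac{\Delta_t^2}{12}\big((\mathcal{J}^{\cdot})^{-1}\hat{\mathcal{L}}{\bf \cdot}^{n+1},(\mathcal{J}_\rho^{\cdot})^{-1}\hat{\mathcal{L}}{\bf \cdot}^{n}\big)$ cross terms in $E^{n+1/2}$ and their telescoped counterparts; here I must be careful to keep track of the acceleration $\mathbf{a}_c^n, \mathbf{a}_f^n$ vs. $D_t^2$ and the predictor/corrector bookkeeping, as in the SBP-GP references \cite{sjogreen2012fourth,Zhang2021}.

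The crux — and the step I expect to be the main obstacle — is showing that the interface boundary terms sum to zero: namely that
\[
B_{2h}(\delta{\bf c}^n,{\bf c}^n) + B_h(\delta{\bf f}^n,{\bf f}^n) + h_2\omega_1\!\!\sum_{{\bf i}\in\Gamma^f}\!(\mathcal{W}^h\delta{\bf f}^n)_{\bf i}\cdot{\bm\eta}_{\bf i} = 0
\]
at every time level (and the analogous predictor-step cancellation). Using the rewritten forms in \eqref{B2h} and \eqref{Bh}, this becomes a statement in the interface inner products $\langle\cdot,\cdot\rangle_{2hw}$, $\langle\cdot,\cdot\rangle_{hw}$. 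I would: substitute the discrete traction condition \eqref{continuous_traction} to replace $(\mathcal{J}_{\Gamma\Lambda}^{2h})^{-1}\wt{\mathcal{A}}_2^{2h}{\bf c}^n$ by $\mathcal{R}\big((\mathcal{J}_{\Gamma\Lambda}^h)^{-1}(\mathcal{A}_2^h{\bf f}^n - h_2\omega_1{\bm\eta}^n)\big)$; then invoke Lemma~\ref{lemma1} (with Remark~\ref{remark1}) to move $\mathcal{R}$ across the inner product onto $\mathcal{P}$; then use the discrete solution-continuity condition \eqref{continuous_sol}, i.e. $\delta{\bf f}^n|_\Gamma = \mathcal{P}(\delta{\bf c}^n|_\Gamma)$, to match the remaining factor. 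The ${\bm\eta}$ correction term in \eqref{continuous_traction} is designed precisely so that the $h_2\omega_1{\bm\eta}$ contribution from \eqref{sbp_f} is cancelled by the corresponding piece coming through $\mathcal{R}$, leaving a clean pairing $\langle\mathcal{P}\,\delta{\bf c}^n,\,(\mathcal{J}_{\Gamma\Lambda}^h)^{-1}\mathcal{A}_2^h{\bf f}^n\rangle_{hw} = \langle\delta{\bf c}^n,\,\mathcal{R}(\cdots)\rangle_{2hw}$ that cancels $B_{2h}$ against $B_h$. Finally, assembling all telescoped pieces yields $E^{n+1/2}-E^{n-1/2}=0$; nonnegativity of $E^{n+1/2}$ under \eqref{timesteprestriction} is Theorem~\ref{energy_pos}, so the energy is a genuine conserved quantity.
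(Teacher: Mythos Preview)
Your proposal is correct and follows essentially the same approach as the paper: expand the energy difference (equivalently, test the scheme against $\delta{\bf c}^n,\delta{\bf f}^n$), apply the SBP identities \eqref{sbp_c}--\eqref{sbp_f}, and reduce everything to three interface residuals $\sigma_1,\sigma_2,\sigma_3$ (your displayed identity is $\sigma_1$; the $\Delta_t^2/12$ terms produce the other two with the roles of $\delta(\cdot)^n$ and $D_t^2(\cdot)^{\ast,n}$ swapped in $B_h,B_{2h}$). Each $\sigma_i$ vanishes by exactly the mechanism you outline---substitute \eqref{continuous_traction}, move $\mathcal{R}$ to $\mathcal{P}$ via Lemma~\ref{lemma1}, then use \eqref{continuous_sol}---so your plan carries through without modification.
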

\begin{proof}
	 From the definition (\ref{discrete_energy}), we obtain
	\begin{multline}\label{difference_energy}
	E^{n+1/2} - E^{n-1/2} 
= \left(\delta {\bf f}^n, \varrho^hD_t^2{\bf f}^n\right)_{hw} + \left(\delta {\bf c}^n, \varrho^{2h}D_t^2{\bf c}^n\right)_{2hw} + \mathcal{S}_h(\delta {\bf f}^n, {\bf f}^n)+ \mathcal{S}_{2h}(\delta {\bf c}^n, {\bf c}^n)\\
	- \frac{\Delta_t^2}{12}\left((\mathcal{J}^h)^{-1}\hat{\mathcal{L}}^h(\delta {\bf f}^n), (\mathcal{J}_\rho^h)^{-1}\hat{\mathcal{L}}^h{\bf f}^{n}\right)_{hw}
	- \frac{\Delta_t^2}{12}\left(\!(\mathcal{J}^{2h})^{-1}\wt{\mathcal{L}}^{2h}(\delta {\bf c}^n), (\mathcal{J}_\rho^{2h})^{-1}\wt{\mathcal{L}}^{2h}{\bf c}^{n}\!\right)_{2hw}.
	\end{multline} 
   We consider term by term in \eqref{difference_energy}. In the first two terms on the right hand side of (\ref{difference_energy}), \revii{ we note that (\ref{predictor_c}) and (\ref{predictor_f}) are equivalent to } 
	\begin{equation}\label{predictor_stable}
	D_t^2 {\bf c}_{\bf i}^{\ast, n} = (\mathcal{J}_\rho^{2h})^{-1}\wt{\mathcal{L}}_{\bf i}{\bf c}^n,\ \ {\bf i}\in I_{\Omega^c}\ \ \  \mbox{and}\ \ \ D_t^2 {\bf f}_{\bf i}^{\ast, n} = (\mathcal{J}_\rho^h)^{-1}\hat{\mathcal{L}}_{\bf i}{\bf f}^n,\ \ {\bf i}\in I_{\Omega^f},
	\end{equation}
\revii{based on the definition of $D_t^2$ in (\ref{Dt2})}. By the  first equality in (\ref{predictor_stable}) and the scheme (\ref{corrector_c}), we obtain the inner product 
	\begin{multline}\label{corrector_stable_c1}
	(\delta {\bf c}^n, {\bf c}^{n+1})_{2hw} = (\delta {\bf c}^n, {\bf c}^{\ast,n+1})_{2hw} 
	+ \frac{\Delta_t^4}{12} \left(\delta {\bf c}^n, (\mathcal{J}_\rho^{2h})^{-1}\wt{\mathcal{L}}^{2h}\left(D_t^2 {\bf c}^{\ast,n}\right)\right)_{2hw}\\
	=  \big(\delta {\bf c}^n, 2{\bf c}^{n} - {\bf c}^{n-1}\big)_{2hw}+ \big(\delta {\bf c}^n,\Delta_t^2(\mathcal{J}_\rho^{2h})^{-1}\wt{\mathcal{L}}^{2h}{\bf c}^n\big)_{2hw} 
	+ \frac{\Delta_t^4}{12} \left(\delta {\bf c}^n, (\mathcal{J}_\rho^{2h})^{-1}\wt{\mathcal{L}}^{2h}\left(D_t^2 {\bf c}^{\ast,n}\right)\right)_{2hw}.
	\end{multline}
\revi{ Multiplying (\ref{corrector_stable_c1}) by the diagonal matrix $\varrho^{2h}/\Delta_t^2 = (\rho^{2h}\otimes{\bf I})/\Delta_t^2$ and rearranging the terms in the resulting equality}, we have
	\begin{equation}\label{corrector_stable_c2}
	\left(\delta {\bf c}^n, \varrho^{2h}D_t^2 {\bf c}^n\right)_{2hw} = \left(\delta {\bf c}^n, (\mathcal{J}^{2h})^{-1}\wt{\mathcal{L}}^{2h}{\bf c}^n\right)_{2hw} 
	+ \frac{\Delta_t^2}{12}\left(\delta {\bf c}^n, (\mathcal{J}^{2h})^{-1}\wt{\mathcal{L}}^{2h}\left(D_t^2 {\bf c}^{\ast,n}\right)\right)_{2hw}.
	\end{equation}
	Using (\ref{sbp_c}), the last term in (\ref{corrector_stable_c2}) can be simplified to 
	\begin{equation}\label{corrector_stable_c3}
	\left(\delta {\bf c}^n, (\mathcal{J}^{2h})^{-1}\wt{\mathcal{L}}^{2h}\left(D_t^2 {\bf c}^{\ast,n}\right)\right)_{2hw}
	= -\mathcal{S}_{2h}\left(\delta {\bf c}^n, D_t^2 {\bf c}^{\ast,n}\right) + B_{2h}\left(\delta {\bf c}^n, D_t^2 {\bf c}^{\ast,n}\right).
	\end{equation}

	Similarly, by the scheme  (\ref{corrector_f}) and  the second equality in (\ref{predictor_stable}), we have
	\begin{equation}\label{corrector_stable_f2}
	\left(\delta {\bf f}^n, \varrho^hD_t^2 {\bf f}^n\right)_{hw} = \left(\delta {\bf f}^n, (\mathcal{J}^h)^{-1}\hat{\mathcal{L}}^{h}{\bf f}^n\right)_{hw}
	+ \frac{\Delta_t^2}{12}\left(\delta {\bf f}^n, (\mathcal{J}^h)^{-1}\hat{\mathcal{L}}^{h}\left(D_t^2 {\bf f}^{\ast,n}\right)\right)_{hw}.
	\end{equation}
	By (\ref{sbp_f}), the last term in (\ref{corrector_stable_f2}) can be written as 
	\begin{equation}\label{corrector_stable_f3}
	\left(\delta {\bf f}^n\!, (\mathcal{J}^h)^{-1}\hat{\mathcal{L}}^{h}\left(D_t^2 {\bf f}^{\ast,n}\right)\right)_{hw}
	\!\!\!\!\!\!= -\mathcal{S}_{h}\left(\delta {\bf f}^n\!, D_t^2 {\bf f}^{\ast,n}\right) \!+ B_{h}\left(\delta {\bf f}^n,\! D_t^2 {\bf f}^{\ast,n}\right) 
	\!+\! h_2\omega_1\!\!\sum_{{\bf i}\in I_{\Gamma^f}}\!\!\left(\mathcal{W}^h\delta {\bf f}^n\right)_{\bf i}\cdot\left(D_t^2 {\bm \eta}_{\bf i}^{\ast,n}\right).
	\end{equation}
	Substituting (\ref{corrector_stable_c3}) into (\ref{corrector_stable_c2}), and (\ref{corrector_stable_f3}) into (\ref{corrector_stable_f2}), yields
		\begin{align}
	\left(\delta {\bf c}^n, \varrho^{2h}D_t^2{\bf c}^n \right)_{2hw} &= (\delta {\bf c}^n,  (\mathcal{J}^{2h})^{-1}\wt{\mathcal{L}}^{2h}{\bf c}^n)_{2hw} 
	-\frac{\Delta_t^2}{12}\mathcal{S}_{2h}\left(\delta {\bf c}^n, D_t^2{\bf c}^{\ast,n}\right) +\frac{\Delta_t^2}{12}B_{2h}\left(\delta {\bf c}^n,D_t^2{\bf c}^{\ast,n} \right),\label{stability_eq1}\\
	\left(\delta {\bf f}^n, \varrho^hD_t^2{\bf f}^n\right)_{hw} &= (\delta {\bf f}^n,  (\mathcal{J}^h)^{-1}\hat{\mathcal{L}}^{2h}{\bf f}^n)_{hw} 
	-\frac{\Delta_t^2}{12}\mathcal{S}_{h}\left(\delta {\bf f}^n, D_t^2{\bf f}^{\ast,n}\right)\label{stability_eq2}\\
	&\quad+\frac{\Delta_t^2}{12}B_{h}\left(\delta {\bf f},D_t^2{\bf f}^{\ast,n}\right) 
	+ \frac{\Delta_t^2}{12}h_2\omega_1\sum_{{\bf i}\in I_{\Gamma^f}}\big(\mathcal{W}^h\delta {\bf f}^n\big)_{\bf i}\cdot\left(D_t^2{\bm \eta}_{\bf i}^{\ast, n}\right).\notag
		\end{align}
	For the third and the fourth terms in (\ref{difference_energy}), by the relation (\ref{sbp_c}) and (\ref{sbp_f}), we rewrite the bilinear forms as
	\begin{align}
		\mathcal{S}_{2h}(\delta {\bf c}^n, {\bf c}^n) &= -\left(\delta {\bf c}^n, (\mathcal{J}^{2h})^{-1}\wt{\mathcal{L}}^{2h}{\bf c}^n\right)_{2hw} + B_{2h}(\delta {\bf c}^n, {\bf c}^n),\label{energy_stable_c1} \\
		\mathcal{S}_h(\delta {\bf f}^n, {\bf f}^n) &= -\left(\delta {\bf f}^n, (\mathcal{J}^h)^{-1}\hat{\mathcal{L}}^h{\bf f}^n\right)_{hw} + B_h(\delta {\bf f}^n, {\bf f}^n) 
	+ h_2\omega_1\sum_{{\bf i}\in\Gamma^f}\left(\mathcal{W}^h(\delta {\bf f}^n)\right)_{\bf i}\cdot {\bm \eta}_{\bf i}^n. \label{energy_stable_f1}
	\end{align}
    Finally, we consider the last two terms in (\ref{difference_energy}). Taking ${\bf v}_h$ to be $(\mathcal{J}_\rho^h)^{-1}\hat{\mathcal{L}}^h{\bf f}^n$, ${\bf u}_h$ to be $\delta {\bf f}^n$ in (\ref{sbp_f}), ${\bf v}_{2h}$ to be $(\mathcal{J}_\rho^{2h})^{-1}\wt{\mathcal{L}}^{2h}{\bf c}^n$, ${\bf u}_{2h}$ to be $\delta {\bf c}^n$ in (\ref{sbp_c}), and then using  (\ref{predictor_c}), (\ref{predictor_f}), and the symmetric property of $\mathcal{S}_h$ and $\mathcal{S}_{2h}$, we obtain 
\begin{align}
	-\frac{\Delta_t^2}{12}\left((\mathcal{J}^h)^{-1}\hat{\mathcal{L}}^h(\delta {\bf f}^n), (\mathcal{J}_\rho^h)^{-1}\hat{\mathcal{L}}^h{\bf f}^{n}\right)_{hw} 
	&= \frac{\Delta_t^2}{12}\mathcal{S}_h\left(\delta {\bf f}^n, D_t^2{\bf f}^{\ast,n}\right)  -\frac{\Delta_t^2}{12} B_{h}\left(D_t^2{\bf f}^{\ast,n}, \delta {\bf f}^n\right)\label{energy_stable_f2}\\
	&\quad -\frac{\Delta_t^2}{12} h_2\omega_1\sum_{{\bf i}\in\Gamma^{f}}\left(\mathcal{W}^hD_t^2{\bf f}^{\ast,n}\right)_{\bf i}\cdot (\delta {\bm \eta}_{\bf i}^n),\notag\\
	-\frac{\Delta_t^2}{12}\left((\mathcal{J}^{2h})^{-1}\wt{\mathcal{L}}^{2h}(\delta{\bf c}^n), (\mathcal{J}_\rho^{2h})^{-1}\wt{\mathcal{L}}^{2h}{\bf c}^{n}\right)_{2hw} 
	&= \frac{\Delta_t^2}{12}\mathcal{S}_{2h}\left(\delta {\bf c}^n, D_t^2{\bf c}^{\ast,n}\right) -\frac{\Delta_t^2}{12} B_{2h}\left(D_t^2{\bf c}^{\ast,n}, \delta {\bf c}^n\right).		\label{energy_stable_c2}
	\end{align}
	Substituting (\ref{stability_eq1}) -- (\ref{energy_stable_c2}) into (\ref{difference_energy}), we have
	\begin{equation}
	E^{n+1/2} - E^{n-1/2} = \sigma_1 + \frac{\Delta_t^2}{12}\sigma_2 - \frac{\Delta_t^2}{12}\sigma_3, 
	\end{equation}
	where
\begin{align*}
	\sigma_1 &= B_{2h}(\delta {\bf c}^n, {\bf c}^n) + B_h(\delta {\bf f}^n, {\bf f}^n) + h_2\omega_1\sum_{{\bf i}\in \Gamma^f}\left(\mathcal{W}^h\delta {\bf f}^n\right)_{\bf i}\cdot{\bm \eta}^n_{\bf i},\\
\sigma_2 &= B_{2h}\left(\delta {\bf c}^n, D_t^2{\bf c}^{\ast,n}\right) + B_h\left(\delta {\bf f}^n, D_t^2{\bf c}^{\ast,n}\right) 
+ h_2\omega_1\sum_{{\bf i}\in \Gamma^f}\left(\mathcal{W}^h\delta {\bf f}^n\right)_{\bf i}\cdot\left(D_t^2{\bm \eta}_{\bf i}^{\ast,n}\right),\\
\sigma_3 &=  B_{2h}\left(D_t^2{\bf c}^{\ast,n}, \delta {\bf c}^n\right) + B_h\left( D_t^2{\bf f}^{\ast,n}, \delta {\bf f}^n\right) 
+ h_2\omega_1\sum_{{\bf i}\in \Gamma^f}\left(\mathcal{W}^hD_t^2{\bf f}^{\ast,n}\right)_{\bf i}\cdot (\delta {\bm \eta}_{\bf i}^n).
\end{align*}
For $\sigma_1$: we combine the expressions of $B_h$ in (\ref{Bh}), $B_{2h}$ in (\ref{B2h}). Using the inner products  (\ref{scalar_product_discrete_interface_f})--(\ref{scalar_product_discrete_interface_c}), and the quatities $\{{\bf c}^{n-1}, {\bf f}^{n-1}\}, \{{\bf c}^{n}, {\bf f}^{n}\}, \{{\bf c}^{n+1}, {\bf f}^{n+1}\}$ satisfying the interface conditions (\ref{continuous_sol}) and (\ref{continuous_traction}), and Lemma \ref{lemma1}, we have
\begin{align*}
\sigma_1 &= \left<\delta{\bf f}^n, (\mathcal{J}_{\Gamma\Lambda}^h)^{-1}(-\mathcal{A}_2^h{\bf f}^n + h_2\omega_1{\bm \eta}^n)\right>_{hw} + \left<\delta {\bf c}^n, (\mathcal{J}_{\Gamma\Lambda}^{2h})^{-1}\wt{\mathcal{A}}_2^{2h}{\bf c}^n\right>_{2hw}\\
&= \left<\mathcal{P}(\delta {\bf c}^n), (\mathcal{J}_{\Gamma\Lambda}^h)^{-1}(-\mathcal{A}_2^h{\bf f}^n + h_2\omega_1{\bm \eta}^n)\right>_{hw} + \left<\delta {\bf c}^n, (\mathcal{J}_{\Gamma\Lambda}^{2h})^{-1}\wt{\mathcal{A}}_2^{2h}{\bf c}^n\right>_{2hw}\\
&= \left<\delta {\bf c}^n, \mathcal{R}\left((\mathcal{J}_{\Gamma\Lambda}^h)^{-1}(-\mathcal{A}_2^h{\bf f}^n + h_2\omega_1{\bm \eta}^n)\right)\right>_{2hw}+ \left<\delta {\bf c}^n, (\mathcal{J}_{\Gamma\Lambda}^{2h})^{-1}\wt{\mathcal{A}}_2^{2h}{\bf c}^n\right>_{2hw} = 0.
\end{align*}


By similar analysis as for $\sigma_1$, and with the fact $\{{\bf c}^{\ast, n+1}, {\bf f}^{\ast, n+1}\}$ satisfying the interface condition (\ref{continuous_sol}) and (\ref{continuous_traction}), we also have $\sigma_2 = 0$ and $\sigma_3 = 0$. This concludes the proof of $E^{n+1/2} = E^{n-1/2}$.
\end{proof}

\section{Numerical experiments}\label{numerical_experiments}
We present four numerical experiments in this section.  In Sec.~\ref{sec_manufactured_sol}, we verify the convergence order of the proposed fourth and sixth order methods by a manufactured solution. In Sec.~\ref{sec:stoneley_wave}, we present the error history and convergence rate of the Stoneley surface wave. Next, the energy conservation property with heterogeneous and discontinuous material properties is investigated in Sec.~\ref{sec:energy_conservation}. Finally, in Sec.~\ref{cfl_sat}, we compare the CFL number of the proposed methods and SBP-SAT methods. \revii{In all numerical experiments, Dirichlet boundary conditions are imposed by the injection method, and free surface boundary conditions are imposed by the ghost point method, see \cite{petersson2015wave} for the details.}

As shown in Theorem \ref{energy_pos}, the precise time step restriction requires the computation of the spectral radius of $\bar{K}_h$ and $\bar{K}_{2h}$ defined in (\ref{kbar}). Due to the boundary modification in the SBP operators, a closed form expression for the spectral radius does not exist and its numerical computation is expensive. 
\revii{To this end, we approximate the theoretical time step \eqref{timesteprestriction} by}
\begin{equation}\label{time_step}
\Delta_t \leq C_{\mbox{cfl}} \min\left\{\min\{h_1, h_2\}{\Big/}\sqrt{\zeta^h_{\mbox{max}}}, \min\{2h_1, 2h_2\}{\Big/}\sqrt{\zeta^{2h}_{\mbox{max}}}\right\},
\end{equation}
where $\zeta^h_{\mbox{max}}$ and $\zeta^{2h}_{\mbox{max}}$ are the  maximum eigenvalues of the $2\times 2$ matrices
\begin{equation*}
T_{\bf i}^{f} = \frac{1}{\rho^{f}(\bf r_i)}\begin{pmatrix}
\mbox{Tr}(N_{11}^{f}(\bf r_i)) & \mbox{Tr}(N_{12}^{f}(\bf r_i))\\
\mbox{Tr}(N_{21}^{f}(\bf r_i)) & \mbox{Tr}(N_{22}^{f}(\bf r_i))
\end{pmatrix},{\bf i}\in I_{\Omega^f},\  T_{\bf i}^{c} = \frac{1}{\rho^{c}(\bf r_i)}\begin{pmatrix}
\mbox{Tr}(N_{11}^{c}(\bf r_i)) & \mbox{Tr}(N_{12}^{c}(\bf r_i))\\
\mbox{Tr}(N_{21}^{c}(\bf r_i)) & \mbox{Tr}(N_{22}^{c}(\bf r_i))
\end{pmatrix},{\bf i}\in I_{\Omega^c},
\end{equation*}
respectively. Here, $\mbox{Tr}(N_{lm}^{f}(\bf r_i))$ and $\mbox{Tr}(N_{lm}^{c}(\bf r_i))$ represent the trace of $2\times2$ matrices $N_{lm}^{f}(\bf r_i)$ and $N_{lm}^{c}(\bf r_i)$, respectively. \revii{The Courant number is set to be $C_{\mbox{cfl}} = 1.3$, which amounts to be 10\% smaller than for the spatially periodic case}, see details in  \cite{petersson2015wave, sjogreen2012fourth}. 


\subsection{Curvilinear grids}\label{sec_manufactured_sol}
We use the method of the manufactured solution to verify the convergence rate of the proposed schemes. We choose the mapping of the coarse domain $\Omega^c$ and fine domain $\Omega^f$ as
\begin{equation*}
{\bf X}^c({\bf r}) = \begin{pmatrix}
2\pi r\\
s\theta_i(r) + (1 - s)\theta_b(r)
\end{pmatrix},\ \ {\bf X}^f({\bf r}) = \begin{pmatrix}
2\pi r\\
s\theta_t(r) + (1 - s)\theta_i(r)
\end{pmatrix},
\end{equation*}
respectively. Here, $0\leq r, s \leq 1$; $\theta_i$, $\theta_b$ and $\theta_t$ represent the interface surface geometry, bottom surface geometry and top surface geometry, respectively. Precisely,
\begin{equation*}
\theta_i(r) \!=\! \pi + 0.2\sin(4\pi r), \ \theta_b(r) \!=\! 0.2\exp\left(\!-\frac{(r - 0.6)^2}{0.04}\!\right),\  \theta_t(r) \!=\! 2\pi + 0.2\exp\left(\!-\frac{(r - 0.5)^2}{0.04}\!\right).
\end{equation*}
In the entire domain, we choose the density 
$\rho(x, y) = 2 + \sin(x + 0.3)\sin(y - 0.2),$
and material parameters
\begin{equation*}
\mu(x, y) = 3 + \sin(3x + 0.1)\sin(y),\ \ \ 
\lambda(x, y) = 21 + \cos(x + 0.1)\sin^2(3y).
\end{equation*}
In addition, we impose a boundary forcing on the top surface and Dirichlet boundary conditions for the other boundaries. The external forcing, top boundary forcing and initial conditions are chosen such that the solutions for both fine domain ($\bf F$) and coarse domain ($\bf C$) are ${\bf F}(\cdot, t) = {\bf C}(\cdot, t) = {\bf u}(\cdot, t) = (u_1 (\cdot, t), u_2(\cdot, t))^T$ with
\[u_1(\cdot,t) = \cos(x + 0.3)\sin(y + 0.2)\cos(t^2)\ \ \mbox{and}\ \  u_2(\cdot,t) = \sin(x + 0.2)\cos(y + 0.2)\sin(t).\]

The problem is evolved until final time $T = 1$ with $2n_1^h = n_1^{2h}+1$, $2n_2^h+1 = n_1^h$, $2n_2^{2h}+1 = n_1^{2h}$. In Table \ref{tab_curvi_interface}, we present the \revi{errors in the discrete $l^2$ norm} and the corresponding convergence rate with both fourth order and sixth order methods. We observe that the convergence rate is fourth order for the fourth order method and fifth order for the sixth order method. Even though the boundary accuracy of the fourth order SBP operator is only second order, the optimal convergence rate is fourth order; the boundary accuracy of the sixth order method only third order, the optimal convergence rate is fifth order. For a more detailed analysis of the convergence rate, we refer to \cite{Wang2018b}.

\begin{table}
	\centering
	\begin{tabular}{lllll}
		\hline
		$n_1^{2h}$ & $l^2$ error (4th) & convergence rate & $l^2$ error (6th) & convergence rate \\
		\hline
		61 & $7.5505\times 10^{-5}$ & &  $2.2767\times 10^{-5}$ &\\
		121 & $4.4768\times 10^{-6}$ & 4.0760 & $4.3716\times 10^{-7}$& 5.7026\\
		241 & $2.6793\times 10^{-7}$ &4.0625 & $1.0207\times 10^{-8}$  & 5.4205 \\
		481 &$1.6393\times 10^{-8}$  &4.0307 & $2.7869\times 10^{-10}$ & 5.1948 \\
      961 & $1.0176\times 10^{-9}$  &4.0101 & $8.2136\times 10^{-12}$ & 5.0845 \\
		\hline
	\end{tabular}
	\caption{$l^2$ errors and convergence rates for the manufactured solution with a smooth curved interface.}
	\label{tab_curvi_interface}
\end{table}

\subsection{Stoneley wave}\label{sec:stoneley_wave}
A Stoneley interface wave clings to the flat interface between two elastic media and decays exponentially into the media, away from the interface is investigated in this section. Define the p-wave speed and s-wave speed in the fine domain $\Omega^f$ and the coarse domain $\Omega^c$ by
\[\alpha^f = \sqrt{\left({\lambda^f+\mu^f}\right)/{\rho^f}},\ \ \ \beta^f = \sqrt{{\mu^f}/{\rho^f}};\ \ \ \alpha^c = \sqrt{\left({\lambda^c+\mu^c}\right)/{\rho^c}},\ \ \ \beta^c = \sqrt{{\mu^c}/{\rho^c}},\]
respectively. We consider two elastic half-planes with a flat interface at $y = 0$. In particular, we consider the mapping of the coarse domain $\Omega^c$ and fine domain $\Omega^f$ as
\begin{equation*}
{\bf X}^c({\bf r}) = \begin{pmatrix}
2\pi r\\
-4\pi(1 - s)
\end{pmatrix},\ \ \  {\bf X}^f({\bf r}) = \begin{pmatrix}
2\pi r\\
4\pi s
\end{pmatrix},
\end{equation*}
respectively. Here, $0\leq r, s \leq 1$.
In addition, we assume that the Stoneley wave is $2\pi$-periodic in the $x$-direction. The component of the Stoneley wave in the half-plane $y\geq 0$ (fine domain $\Omega^f$) can then be written as
\begin{multline}\label{stoneley_f}
\begin{pmatrix}
u_1^f(\cdot,t)\\
u_2^f(\cdot,t)
\end{pmatrix} = A^f\exp(-y\sqrt{1-(c_s/\alpha^f)^2}) \begin{pmatrix}
\cos(x - c_st) \\
-\sqrt{1 - (c_s/\alpha^f)^2}\sin(x - c_s t) 
\end{pmatrix}\\
+ B^f\exp(-y\sqrt{1 - (c_s/\beta^f)^2})\begin{pmatrix}
-\sqrt{1 - (c_s/\beta^f)^2}\cos(x - c_st)\\
\sin(x- c_st)
\end{pmatrix},
\end{multline}
and the component of the Stoneley wave in the half-plane $y \leq 0$ (coarse domain $\Omega^c$) is written as
\begin{multline}\label{stoneley_c}
\begin{pmatrix}
u_1^c(\cdot,t)\\
u_2^c(\cdot,t)
\end{pmatrix} = A^c\exp(y\sqrt{1-(c_s/\alpha^c)^2}) \begin{pmatrix}
\cos(x - c_st) \\
\sqrt{1 - (c_s/\alpha^c)^2}\sin(x - c_s t) 
\end{pmatrix}\\
+ B^c\exp(y\sqrt{1 - (c_s/\beta^c)^2})\begin{pmatrix}
\sqrt{1 - (c_s/\beta^c)^2}\cos(x - c_st)\\
\sin(x - c_st)
\end{pmatrix}.
\end{multline}
To get the coefficients $A^f$, $B^f$, $A^c$, $B^c$ and the phase velocity $c_s$ in (\ref{stoneley_f}) and (\ref{stoneley_c}), we apply the interface condition (\ref{interface_cond}) which leads to the following linear system
\begin{equation}\label{Stoneley_system}
D(A^f, B^f, A^c, B^c)^T = (0, 0, 0, 0)^T
\end{equation}
where 
\begin{equation*}
\!\!\!D \!= \!\begin{pmatrix}
1 & -\eta^f & -1 & -\eta^c \\
\gamma^f & -1 & \gamma^c& 1\\
-\rho^fc_s^2+2\rho^f(\beta^f)^2 &-2\rho^f(\beta^f)^2\eta^f& 
\rho^cc_s^2-2\rho^c(\beta^c)^2& -2\rho^c(\beta^c)^2\eta^c\\
-2\rho^f(\beta^f)^2\gamma^f& \rho^f(\beta^f)^2(2-(\frac{c_s}{\beta^f})^2) & 
-2\rho^c(\beta^c)^2\gamma^c& -\rho^c(\beta^c)^2(2-(\frac{c_s}{\beta^c})^2)
\end{pmatrix},
\end{equation*}
with
\[\eta^c = \sqrt{1 - \left(c_s/\beta^c\right)^2}, \ \eta^f = \sqrt{1 - \left({c_s}/{\beta^f}\right)^2},\ \gamma^c = \sqrt{1 - \left({c_s}/{\alpha^c}\right)^2},\ \gamma^f = \sqrt{1 - \left({c_s}/{\alpha^f}\right)^2}.\]
First, note that (\ref{Stoneley_system}) is an under-determined linear system, we have infinitely many solutions; second, to solve (\ref{Stoneley_system}), we must have $\det(D) = 0$, which leads to a value for the phase velocity $c_s$. To uniquely determine the values for $A^f, B^f, A^c$ and $B^c$, we impose $A^f = 1$ for all simulations of this section.

\begin{table}
	\centering
	\begin{tabular}{cccc}
		\hline
		$(\mu^f = \mu, \lambda^f, \rho^f)$ & $(\mu^c = 2\mu, \lambda^c, \rho^c)$   & $c_s$ & $2\pi/c_s$\\
		\hline
		$(1, 1, 2)$ & $ (2, 2, 1.9999)$ & $0.995069948673601$ & $6.31$ \\
		$(0.1, 1, 2)$ & $ (0.2, 2, 1.9999)$ & $0.315111729874378$ & $19.93$ \\
		$(0.01, 1, 2)$ & $ (0.02, 2, 1.9999)$ & $0.099674435254786$ & $63.03$ \\
		$(0.001, 1, 2)$ & $ (0.002, 2, 1.9999)$ & $ 0.031520771980397$ & $199.33$ \\
		\hline
	\end{tabular}
	\caption{Phase velocity $c_s$ and the corresponding time period $2\pi/c_s$ for different Lam\'{e} parameters and densities.}
	\label{tab_Stoneley_parameter}
\end{table}

\begin{figure}[htbp]
	\centering
	\includegraphics[width=0.48\textwidth,trim={0.5cm 0cm 1cm 0cm}, clip]{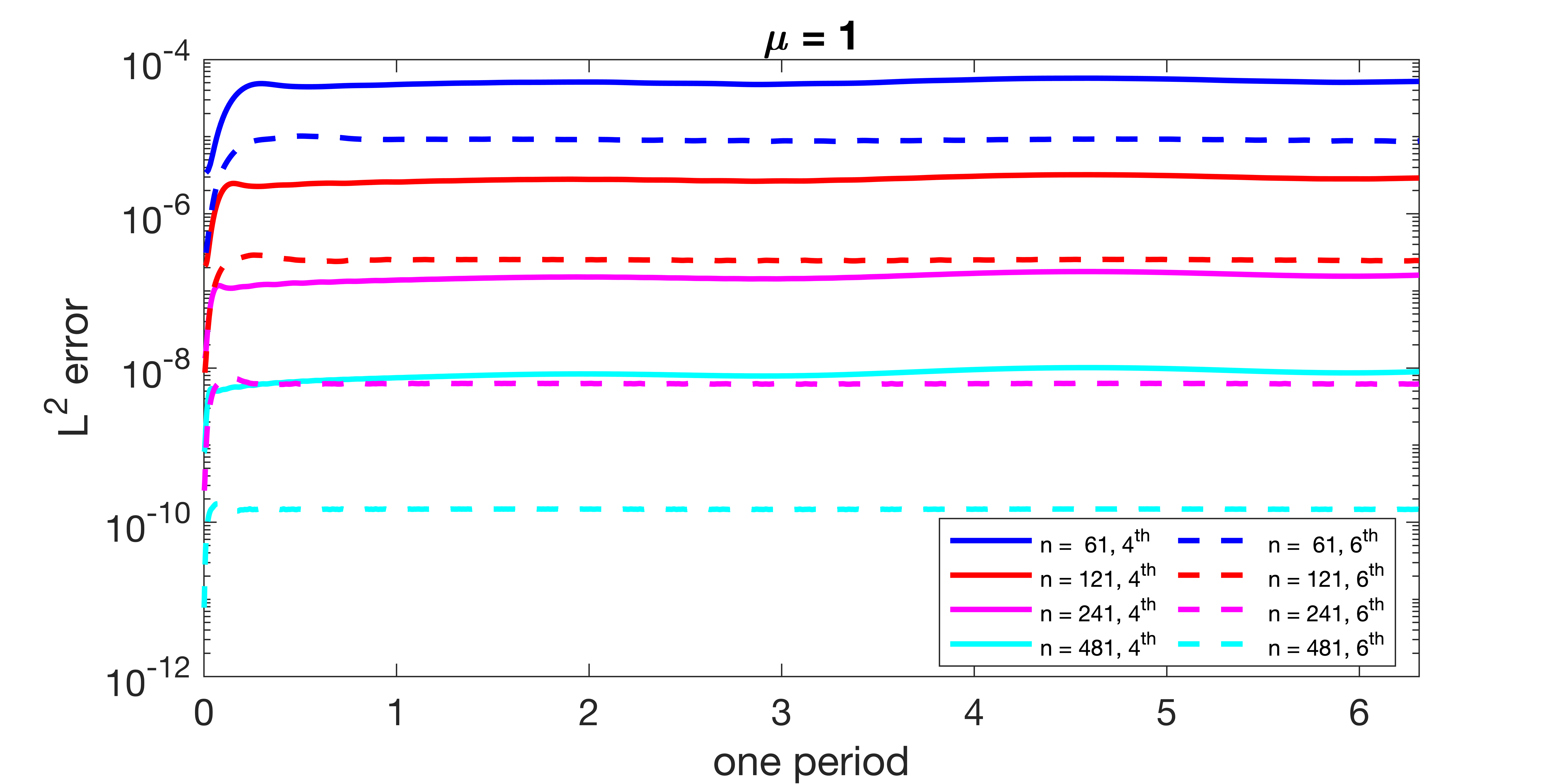}
	\includegraphics[width=0.48\textwidth,trim={0.5cm 0cm 1cm 0cm}, clip]{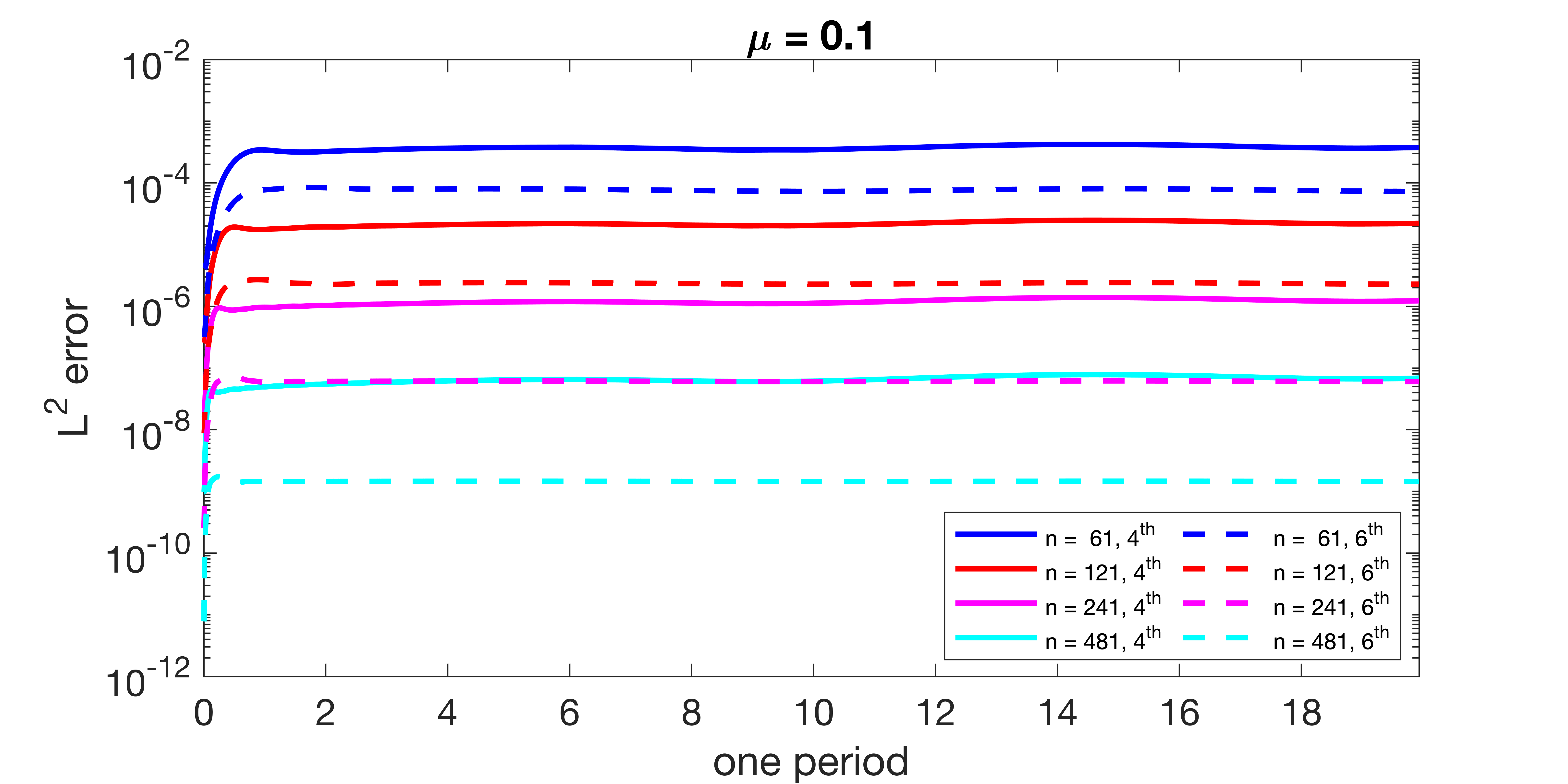}\\
	\includegraphics[width=0.48\textwidth,trim={0.5cm 0cm 1cm 0cm}, clip]{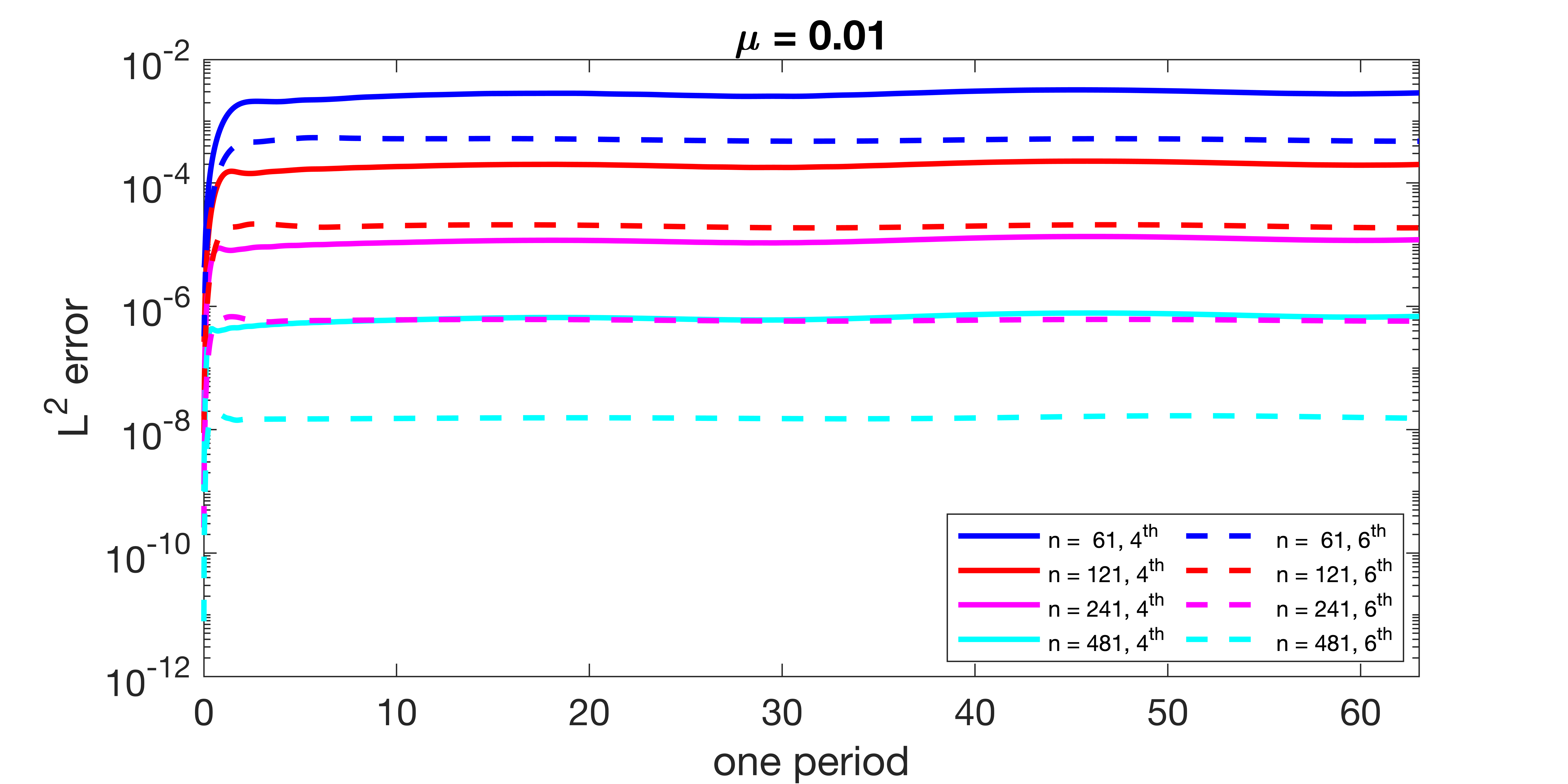}
	\includegraphics[width=0.48\textwidth,trim={0.5cm 0cm 1cm 0cm}, clip]{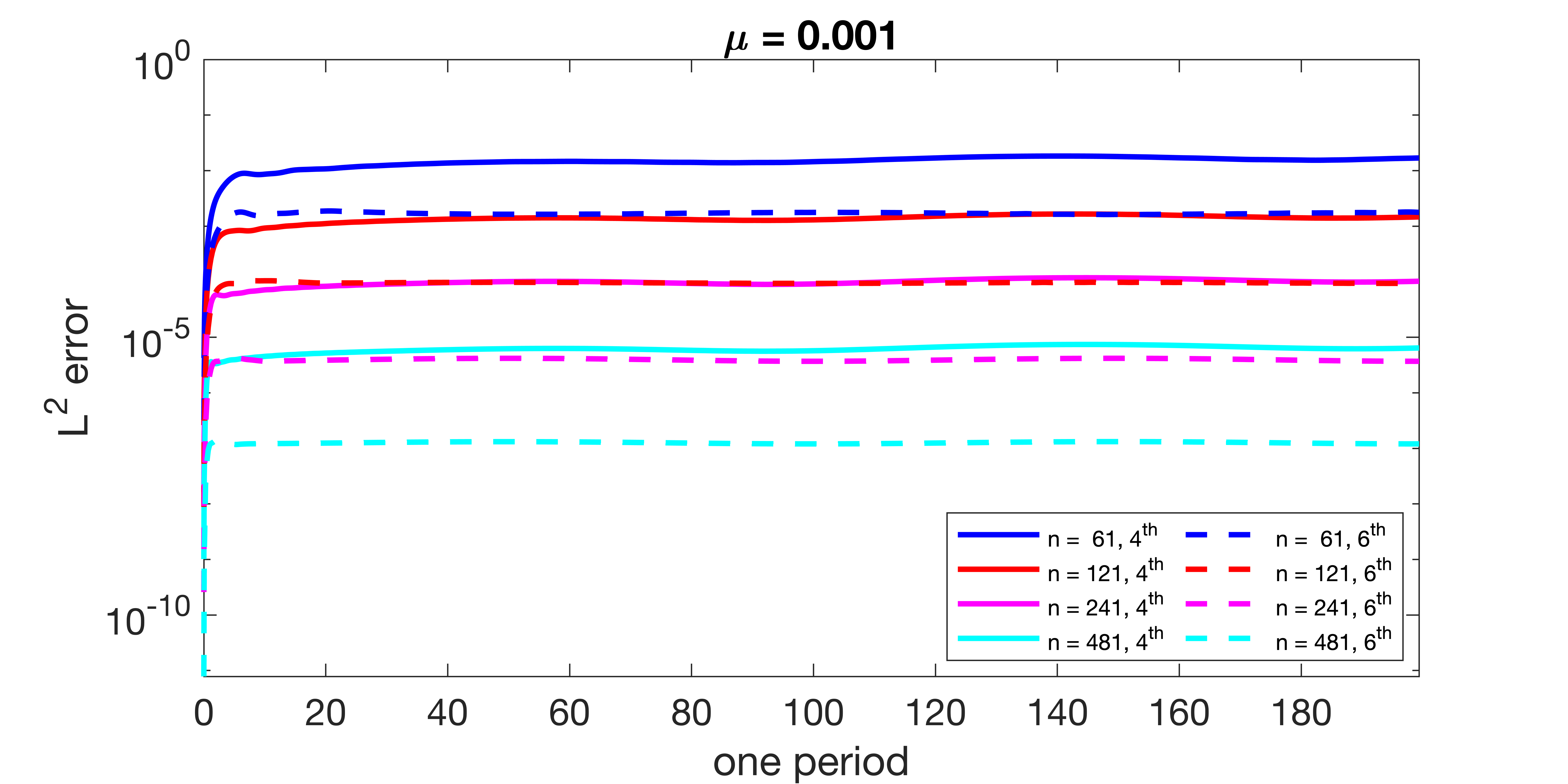}
	\caption{$l^2$ error history for the Stoneley wave problem with both fourth and sixth order schemes. From the left to the right, the top to the bottom are for $\mu = 1, 0.1, 0.01$ and $0.001$, respectively. The solid lines are for the fourth order method, and the dash lines are for the sixth order method.
	}\label{error_history}
\end{figure}

\begin{table}
	\centering
	\begin{tabular}{lllll}
		\hline
		$n$ & $l^2$ error (4th) & convergence rate & $l^2$ error (6th) & convergence rate \\
		\hline
		61 & $5.2044\times 10^{-5}$ & &$8.7899\times 10^{-6}$ & \\
		121 & $2.9083\times 10^{-6}$ & 4.1614 &$2.4716\times 10^{-7}$ & 5.1523 \\
		241 & $1.5990\times 10^{-7}$ & 4.1849& $6.2267\times 10^{-9}$& 5.3108\\
		481 &  $8.9610\times 10^{-9}$& 4.1573& $1.4671\times 10^{-10}$&  5.4074\\
		\hline
	\end{tabular}
	\caption{$l^2$ errors and convergence rates for the Stoneley wave problem with $\mu = 1$ after one temporal period.}
	\label{tab_Stoneley_mu1}
\end{table}

\begin{table}
	\centering
	\begin{tabular}{lllll}
		\hline
		$n$ & $l^2$ error (4th) & convergence rate & $l^2$ error (6th) & convergence rate \\
		\hline
		61 & $3.7483\times 10^{-4}$ &  & $7.2833\times 10^{-5}$& \\
		121 & $2.2121\times 10^{-5}$ & 4.0826& $2.2862\times 10^{-6}$&  4.9935\\
		241 & $1.2294\times 10^{-6}$ &4.1693 &  $6.0084\times 10^{-8}$& 5.2498\\
		481 &  $6.8408\times 10^{-8}$& 4.1676& $1.4437\times 10^{-9}$&  5.3791\\
		\hline
	\end{tabular}
	\caption{$l^2$ errors and convergence rates for the Stoneley wave problem with $\mu = 0.1$ after one temporal period.}
	\label{tab_Stoneley_mu01}
\end{table}

\begin{table}
	\centering
	\begin{tabular}{lllll}
		\hline
		$n$ & $L_2$ error (4th) & convergence rate & $L_2$ error (6th) & convergence rate \\
		\hline
		61 & $2.8806\times 10^{-3}$ &  &$4.7402\times 10^{-4}$ & \\
		121 & $1.9891\times 10^{-4}$ &3.8561 &$1.8704\times 10^{-5}$ & 4.6635 \\
		241 & $1.1996\times 10^{-5}$ & 4.0514& $5.7506\times10^{-7}$ & 5.0234\\
		481 &  $6.8723\times 10^{-7}$& 4.1256& $1.5336\times 10^{-8}$&  5.2287\\
		\hline
	\end{tabular}
	\caption{$L_2$ errors and convergence rates for the Stoneley wave problem with $\mu = 0.01$ after one temporal period.}
	\label{tab_Stoneley_mu001}
\end{table}

The main purpose of the numerical experiments in this section is to study the convergence properties of the proposed fourth and sixth order schemes when simulating Stoneley waves \revi{with discontinuous material property}. The computational domain is taken to contain exactly one wavelength, $2\pi$, in the $x$-direction. Exact Dirichlet data are imposed at physical boundaries. Initial data for the computations is taken from (\ref{stoneley_f}) and (\ref{stoneley_c}) at time $t = 0$. We choose $n_1^{2h}=n_2^{2h} = n$, $n_1^{h} = n_2^{h}= 2n-1$ for all simulations. The cumulative $L_2$ errors are computed as a function of time for one temporal period. In Table \ref{tab_Stoneley_parameter}, we present four different sets of Lam\'{e} parameters and densities, and their corresponding phase velocity $c_s$ and time period. As $\mu$ becomes smaller and smaller and $\lambda^f \gg \mu^f=\mu$, $\lambda^c \gg \mu^c = 2\mu$, the Stoneley interface wave is similar to the Rayleigh wave. It has been shown by \cite{duru2014accurate,kreiss2012boundary} that numerical simulations for the Stoneley wave which is similar to the Rayleigh wave require an unexpectedly large resolution for accurate solutions. The experiments in this section also evaluate the performance of the proposed fourth and sixth methods when simulating Stoneley waves in this extreme case.

Figure \ref{error_history} presents the cumulative $L_2$ error history for one temporal period obtained by both fourth and sixth order methods with the material parameters given in Table \ref{tab_Stoneley_parameter}. The corresponding convergence rate for each set of material parameter values are given from Table \ref{tab_Stoneley_mu1} to Table \ref{tab_Stoneley_mu0001}. We observe that the cumulative $L_2$ errors for the sixth order method are smaller than the errors for the fourth order method for all cases, but both high order methods resolve the solution well. Further, for both fourth order and sixth \revii{order} methods, we observe a slightly better convergence rate comparing with the theoretical rate: four for the fourth order method and five for the sixth order method. 

\begin{table}
\centering
\begin{tabular}{lllll}
\hline
$n$ & $L_2$ error (4th) & convergence rate & $L_2$ error (6th) & convergence rate \\
\hline
61 & $1.6901\times 10^{-2}$ &  & $1.7712\times 10^{-3}$& \\
121 & $1.4710\times 10^{-3}$ &3.5222 & $9.3318\times 10^{-5}$  & 4.2464 \\
241 & $1.0204\times 10^{-4}$ &3.8495 & $3.6962\times 10^{-6}$ & 4.6580 \\
481 & $6.4232\times 10^{-6}$ &3.9896& $1.2041\times 10^{-7}$& 4.9400 \\
\hline
\end{tabular}
\caption{$L_2$ errors and convergence rates for the Stoneley wave problem with $\mu=0.001$ after one temporal period.}
\label{tab_Stoneley_mu0001}
\end{table}

\subsection{Energy conservation}\label{sec:energy_conservation}
To verify the energy conservation property of the proposed fourth and sixth order schemes, we perform computation without external source term, but with a Gaussian initial data centered at the origin of the computational domain. The computational domain is chosen to be the same as in Sec.~\ref{sec_manufactured_sol}. The material property is heterogeneous and discontinuous: for the fine domain $\Omega^f$, the density varies according to
\[\rho^f(x, y) = 4 + \sin(x + 0.3)\sin(y - 0.2),\]
and material parameters satisfy
\[\mu^f(x, y) = 3 + \sin(3x + 0.1)\sin(y),\ \ \ \lambda^f(x, y) = 15 + \cos(x + 0.1)\sin^2(3y);\]
for the coarse domain $\Omega^c$, the density varies according to
\[\rho^c(x, y) = 2 + \sin(4x + 0.3)\sin(y - 0.2),\]
and material parameters satisfy
\[\mu^c(x, y) = 3 + \sin(3x + 0.1)\sin(2y),\ \ \ \lambda^c(x, y) = 21 + \cos(x + 0.1)\sin^2(3y).\]
\revii{For the initial condition, we sample the data on every grid point from the uniform distribution between 0 and 1. Before the time loop, we modify the initial data on the interface and on the ghost point such that the interface conditions \eqref{continuous_sol} and \eqref{continuous_traction} are satisfied. }

In Figure \ref{energy_conserved}, we present the relative change in the fully discrete energy (\ref{discrete_energy}), $(E^{n+1/2} - E^{1/2})/E^{1/2}$, as a function of time with
$t \in [0, 100]$, $n_1^{2h} = 61$, $n_2^{2h} = 31$, $n_1^h = 121$ and $n_1^{2h} = 61$ for both fourth and sixth schemes. Clearly, we observe that the fully discrete energy remains constant up to the round-off error, even with the highly oscillatory initial data.

\begin{figure}[htbp]
	\centering
	\includegraphics[width=0.48\textwidth,trim={0.2cm 0.2cm 1cm 0.1cm}, clip]{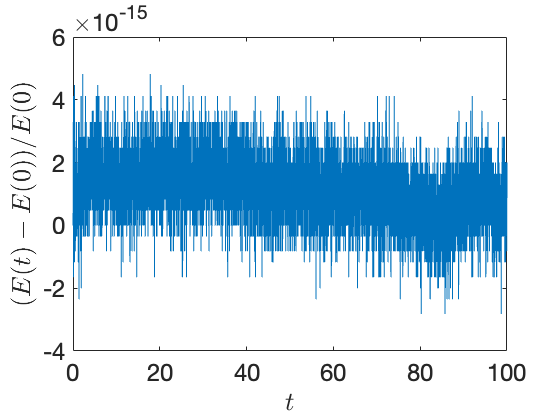}
	\includegraphics[width=0.48\textwidth,trim={0.1cm 0.2cm 1cm 0.1cm}, clip]{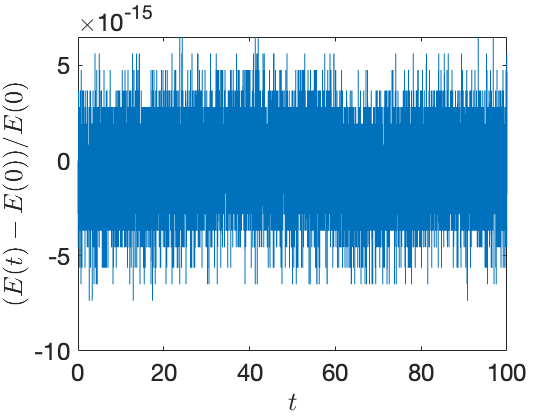}
	\caption{The relative change in the fully discrete energy as a function of time. The left panel is for the fourth order method and the right panel is for the sixth order method.
	}\label{energy_conserved}
\end{figure}

\subsection{Comparison with the SAT method}\label{cfl_sat}
In the last numerical example, we compare the proposed methods with the SBP-SAT method in terms of CFL time-step restriction. Since the SBP-SAT discretization for the elastic wave equations with nonconforming interfaces is not available in the literature, we use the scalar wave equation as the model problem. More precisely, we consider the example in Sec.~7.3 from \cite{Almquist2019} with piecewise constant material property, 
\[
u_{tt} = C^2 (u_{xx}+ u_{yy}),
\]
where the wave speed is $C=1$ in $(x,y)\in \Omega_c:=[-10,0]\times[0,10]$ and  $C=0.5$ in $(x,y)\in \Omega_f:=[0,10]\times[0,10]$. At the interface $x=0$, $y\in [0,10]$, we impose interface conditions as continuity of $u$ and $C^2u_x$. In this case, an analytical solution can be constructed by Snell's law, which takes the form  
\begin{equation}\label{snell}
u = \begin{cases}
\cos(x+y-\sqrt{2}t)+k_2\cos(x-y+\sqrt{2}t), \quad (x,y)\in\Omega_c \\
(1+k_2)\cos(k_1x+y-\sqrt{2}t),\quad (x,y)\in\Omega_f,
\end{cases}
\end{equation}
where $k_1=\sqrt{7}$ and $k_2=(4-\sqrt{7})/(4+\sqrt{7})$. In the convergence test, we use the analytical solution \eqref{snell} to obtain the data for initial conditions and the Dirichlet boundary conditions at all boundaries. 

In the spatial discretization, we use $121^2$ grid points in the coarse domain $\Omega_c$ and $241^2$ grid points in the fine domain $\Omega_f$. In this case, the number of grid points per wavelength is the same in both $\Omega_c$ and $\Omega_f$. We denote the restriction on time-step $\Delta_t$ as
\begin{equation}\label{cfl}
\Delta_t\leq \theta C_{vn}\frac{h}{\sqrt{2}c},
\end{equation}
where $C_{vn}$ is derived by the Von Neumann analysis for the corresponding periodic problem, and $\theta\leq 1$ is computed numerically as the largest value such that the numerical method is stable. \revii{More precisely, we start with an initial value of 1.05 for $\theta$, and solve the equation until the final time $T=20$. If it is unstable, we decrease $\theta$ by 0.01 and repeat this procedure until the method is stable, and report the final $\theta$ values in Table \ref{tab_theta}.} With the fourth order predictor-corrector time stepping method in Sec.~\ref{time_integrator}, it is shown in \cite{wang2018fourth} that $C_{vn}=1.5$ for the fourth order spatial discretization. By the same approach, $C_{vn}=2\sqrt{135/272}$ for the sixth order spatial discretization for periodic problems. In the SBP-SAT method, the penalty parameters are chosen to be three times the limit value required by stability for both Dirichlet boundaries and the grid interface, \revi{which is the same choice as in \cite{Almquist2019} when solving the wave equation}.

\begin{table}
	\centering
	\begin{tabular}{ccc}
		\hline
		& SBP-GP & SBP-SAT\\
		\hline
		fourth order  & 0.98& 0.18 \\
		sixth order  & 0.65& 0.26 \\
		\hline
	\end{tabular}
	\caption{The value $\theta$ in \eqref{cfl}. }
	\label{tab_theta}
\end{table}

For the fourth order method, the SBP-GP discretization allows for a time step nearly as large as given by the Von Neumann analysis, but the time step for the SBP-SAT discretization must be lowered five times. 
For the sixth order method,  we have $\theta=0.65$ for the SBP-GP discretization. This indicates that we have to use a smaller time step than for periodic problems. However, this is not due to the interface coupling. By numerically computing the spectral radius of the sixth order SBP operator, we find that it is more than twice large the spectral radius for the corresponding central difference operator for periodic problems. In the SBP-SAT discretization, the time step needs to reduce further for stability.

\section{Conclusion}
We have developed a high order multiblock finite difference method for the elastic wave equations on curvilinear grids. The finite difference stencils satisfy a summation-by-parts property. Adjacent blocks are coupled by imposing interface conditions strongly using ghost points. Grid sizes can be chosen differently across material interfaces to adapt to the velocity structure of the material. In our previous work \cite{Zhang2021} with periodic or far-field boundary conditions in the spatial direction tangential to the material interface, interpolation operators based on centered stencils are used for the hanging nodes. However, for problems in bounded domains, additional difficulties arise in both stability and accuracy, because interpolation stencils must be modified near the edges of the grid interface. In this case, we use the order preserving interpolation operators from \cite{Almquist2019} for the hanging nodes. Our truncation error analysis shows that only one pair of interpolation operators is needed, instead of two pairs when interface conditions are imposed weakly using penalty terms for the wave equation. Together with an explicit predictor-corrector time stepping method, we have proved that the fully discrete energy is conserved in time. The CFL time step restriction is not affected by the numerical interface coupling. 

We have presented numerical examples to verify the energy conserving property and the excellent time-step restriction of the proposed method.  In addition, by the manufactured solution approach with smooth material property and a curved interface, we have observed that the convergence rate is four and five, when the interior order of the finite difference stencil is four and six, respectively. The same convergence rates are also observed in the Stoneley wave example, where the interface geometry is flat but the material property is discontinuous. In both cases, different grid sizes are used across a nonconforming grid interface, and they do not lead to any accuracy reduction.

\section*{Acknowledgement}
Part of the computations were performed on resources provided by Swedish National Infrastructure for Computing (SNIC) through Uppsala Multidisciplinary Center for Advanced Computational Science
 (UPPMAX) under Project SNIC 2019/8-263.

\appendix

\section{Positivity of the fully discrete energy $E^{n+1/2}$}\label{positive_energy}
Let $\tau$ be the summation of the third and fourth terms on the right-hand side of (\ref{discrete_energy}). By  the symmetry of $\mathcal{S}_h$ and $\mathcal{S}_{2h}$, we have
\begin{align*}
\tau &:=\mathcal{S}_{2h}({\bf c}^{n+1}, {\bf c}^{n})+\mathcal{S}_h({\bf f}^{n+1}, {\bf f}^n) \\
&= \frac{1}{2}\left[\mathcal{S}_h({\bf f}^{n+1}, {\bf f}^n) + \mathcal{S}_{2h}({\bf c}^{n+1}, {\bf c}^{n})\right]  + \frac{1}{2}\left[\mathcal{S}_h({\bf f}^{n}, {\bf f}^{n+1}) + \mathcal{S}_{2h}({\bf c}^{n}, {\bf c}^{n+1})\right].
\end{align*}
We then use the relations (\ref{sbp_c}) and (\ref{sbp_f}), the fact that both $\{{\bf f}^{n+1}, {\bf c}^{n+1}\}$ and $\{{\bf f}^n, {\bf c}^n\}$ satisfy the interface conditions (\ref{continuous_sol}) and (\ref{continuous_traction}), \revii{ and Remark \ref{remark1}} to obtain
\begin{multline}\label{eq1}
\tau = -\frac{1}{2} \left[\big({\bf f}^{n+1}, (\mathcal{J}^h)^{-1}\hat{\mathcal{L}}^h{\bf f}^n\big)_{hw} + \big({\bf c}^{n+1}, (\mathcal{J}^{2h})^{-1}\wt{\mathcal{L}}^{2h}{\bf c}^n\big)_{2hw} \right] \\
 - \frac{1}{2} \left[\big({\bf f}^{n}, (\mathcal{J}^h)^{-1}\hat{\mathcal{L}}^h{\bf f}^{n+1}\big)_{hw} + \big({\bf c}^{n}, (\mathcal{J}^{2h})^{-1}\wt{\mathcal{L}}^{2h}{\bf c}^{n+1}\big)_{2hw} \right].
\end{multline}
Note that the above derivation is analog to the analysis leading $\sigma_1 = 0$ in Theorem \ref{energy_conserve_th}, and will be repeated later in the proof as well. 

Next, we denote the last two terms in the right-hand side of (\ref{discrete_energy})   by $\gamma$, and rewrite as 
\begin{multline*}
\gamma = -\frac{\Delta_t^2}{24}\left[\left((\mathcal{J}^h_\rho)^{-1}\hat{\mathcal{L}}^h{\bf f}^{n+1}, (\mathcal{J}^h)^{-1}\hat{\mathcal{L}}^h{\bf f}^{n}\right)_{hw} + \left((\mathcal{J}_{\rho}^{2h})^{-1}\wt{\mathcal{L}}^{2h}{\bf c}^{n+1}, (\mathcal{J}^{2h})^{-1}\wt{\mathcal{L}}^{2h}{\bf c}^{n}\right)_{2hw}\right] \\
-\frac{\Delta_t^2}{24}\left[\left((\mathcal{J}_\rho^h)^{-1}\hat{\mathcal{L}}^h{\bf f}^{n}, (\mathcal{J}^h)^{-1}\hat{\mathcal{L}}^h{\bf f}^{n+1}\right)_{hw} + \left((\mathcal{J}_\rho^{2h})^{-1}\wt{\mathcal{L}}^{2h}{\bf c}^{n}, (\mathcal{J}^{2h})^{-1}\wt{\mathcal{L}}^{2h}{\bf c}^{n+1}\right)_{2hw}\right].
\end{multline*}
In the following two steps, we use the relations (\ref{sbp_c}) and (\ref{sbp_f}) twice to further rewrite $\gamma$ and move the spatial difference operators to one side in the inner products. 

Step 1: for the first two terms in $\gamma$, we replace ${\bf v}_h$ by $(\mathcal{J}_\rho^h)^{-1}\hat{\mathcal{L}}^h{\bf f}^{n+1}$ and ${\bf u}_h$ by ${\bf f}^n$ in (\ref{sbp_f}), ${\bf v}_{2h}$ by $(\mathcal{J}_\rho^{2h})^{-1}\wt{\mathcal{L}}^{2h}{\bf c}^{n+1}$ and ${\bf u}_{2h}$ by ${\bf c}^n$ in (\ref{sbp_c}); for the last two terms in $\gamma$, we replace ${\bf v}_h$ by $(\mathcal{J}_\rho^h)^{-1}\hat{\mathcal{L}}^h{\bf f}^{n}$ and ${\bf u}_h$ by ${\bf f}^{n+1}$ in (\ref{sbp_f}), ${\bf v}_{2h}$ by $(\mathcal{J}_\rho^{2h})^{-1}\wt{\mathcal{L}}^{2h}{\bf c}^{n}$ and ${\bf u}_{2h}$ by ${\bf c}^{n+1}$ in (\ref{sbp_c}). \revii{Recalling (\ref{predictor_stable}), we have
	\[ (\mathcal{J}^{2h}_\rho)^{-1} \widetilde{\mathcal{L}}^{2h}{\bf c}^{n+1} = \frac{{\bf c}^{\ast, n+2} - 2{\bf c}^{n+1} + {\bf c}^{n}}{\Delta t^2}, \quad (\mathcal{J}^h_\rho)^{-1} \hat{\mathcal{L}}^h{\bf f}^{n+1} = \frac{{\bf f}^{\ast, n+2} - 2{\bf f}^{n+1} + {\bf f}^{n}}{\Delta t^2}, \]
	and
		\[ (\mathcal{J}^{2h}_\rho)^{-1} \widetilde{\mathcal{L}}^{2h}{\bf c}^{n} = \frac{{\bf c}^{\ast, n+1} - 2{\bf c}^{n} + {\bf c}^{n-1}}{\Delta t^2}, \quad (\mathcal{J}^h_\rho)^{-1} \hat{\mathcal{L}}^h{\bf f}^{n} = \frac{{\bf f}^{\ast, n+1} - 2{\bf f}^{n} + {\bf f}^{n-1}}{\Delta t^2}.\]
	}Requiring that $\{{\bf f}^{\ast,n+2}, {\bf c}^{\ast,n+2}\}$, $\{{\bf f}^{\ast,n+1}, {\bf c}^{\ast,n+1}\}$, $\{{\bf f}^{n+1}, {\bf c}^{n+1}\}$, $\{{\bf f}^n, {\bf c}^n\}$, $\{{\bf f}^{n-1}, {\bf c}^{n-1}\}$ satisfy the interface condition (\ref{continuous_sol}), and $\{{\bf f}^n, {\bf c}^n\}$, $\{{\bf f}^{n+1}, {\bf c}^{n+1}\}$ satisfy the interface condition (\ref{continuous_traction}), we obtain 
\begin{multline*}
\gamma = \frac{\Delta_t^2}{24}\left(\mathcal{S}_h\big((\mathcal{J}_\rho^h)^{-1}\hat{\mathcal{L}}^h{\bf f}^{n+1}, {\bf f}^{n}\big) + \mathcal{S}_{2h}\big((\mathcal{J}_\rho^{2h})^{-1}\wt{\mathcal{L}}^{2h}{\bf c}^{n+1}, {\bf c}^{n}\big)\right) \\
+ \frac{\Delta_t^2}{24}\left(\mathcal{S}_h\big((\mathcal{J}_\rho^h)^{-1}\hat{\mathcal{L}}^h{\bf f}^{n}, {\bf f}^{n+1}\big) + \mathcal{S}_{2h}\big((\mathcal{J}_\rho^{2h})^{-1}\wt{\mathcal{L}}^{2h}{\bf c}^{n}, {\bf c}^{n+1}\big)\right).
\end{multline*}
Note that the above derivation is again similar to the analysis in Theorem \ref{energy_conserve_th} for $\sigma_1 = 0$.

Step 2: by using the symmetric property of $\mathcal{S}_h$, $\mathcal{S}_{2h}$, and then using the relations (\ref{sbp_c}), (\ref{sbp_f}) one more time: take ${\bf v}_h$ to be ${\bf f}^n$,  ${\bf u}_h$ to be $(\mathcal{J}_\rho^h)^{-1}\hat{\mathcal{L}}^h{\bf f}^{n+1}$ in (\ref{sbp_f}), ${\bf v}_{2h}$ to be ${\bf c}^n$,  ${\bf u}_{2h}$ to be $(\mathcal{J}_\rho^{2h})^{-1}\wt{\mathcal{L}}^{2h}{\bf c}^{n+1}$ in (\ref{sbp_c}) for the first two terms in $\gamma$; take ${\bf v}_h$ to be ${\bf f}^{n+1}$,  ${\bf u}_h$ to be $(\mathcal{J}_\rho^h)^{-1}\hat{\mathcal{L}}^h{\bf f}^{n}$ in (\ref{sbp_f}), ${\bf v}_{2h}$ to be ${\bf c}^{n+1}$,  ${\bf u}_{2h}$ to be $(\mathcal{J}_\rho^{2h})^{-1}\wt{\mathcal{L}}^{2h}{\bf c}^{n}$ in (\ref{sbp_c}) for the last two terms in $\gamma$. \revii{ Recalling (\ref{predictor_stable}) and }requiring that $\{{\bf f}^n, {\bf c}^n\}$, $\{{\bf f}^{n+1}, {\bf c}^{n+1}\}$ satisfy the interface condition (\ref{continuous_sol}), 
$\{{\bf f}^{\ast,n+2}, {\bf c}^{\ast,n+2}\}$,  $\{{\bf f}^{\ast,n+1}, {\bf c}^{\ast,n+1}\}$, $\{{\bf f}^{n+1}, {\bf c}^{n+1}\}$, $\{{\bf f}^n, {\bf c}^n\}$ and $\{{\bf f}^{n-1},{\bf c}^{n-1}\}$ satisfy the interface condition (\ref{continuous_traction}),  we have 
\begin{multline}\label{eq2}
\gamma = -\frac{\Delta_t^2}{24}\left[\left({\bf f}^{n}, (\mathcal{J}^h)^{-1}\hat{\mathcal{L}}^h\big((\mathcal{J}_\rho^h)^{-1}\hat{\mathcal{L}}^h{\bf f}^{n+1}\big)\right)_{hw} + \left( {\bf c}^{n}, (\mathcal{J}^{2h})^{-1}\wt{\mathcal{L}}^{2h}\big((\mathcal{J}_\rho^{2h})^{-1}\wt{\mathcal{L}}^{2h}{\bf c}^{n+1}\big)\right)_{2hw}\right] \\
-\frac{\Delta_t^2}{24}\left[\left({\bf f}^{n+1}, (\mathcal{J}^h)^{-1}\hat{\mathcal{L}}^h\big((\mathcal{J}_\rho^h)^{-1}\hat{\mathcal{L}}^h{\bf f}^{n}\big)\right)_{hw} + \left( {\bf c}^{n+1}, (\mathcal{J}^{2h})^{-1}\wt{\mathcal{L}}^{2h}\big((\mathcal{J}_\rho^{2h})^{-1}\wt{\mathcal{L}}^{2h}{\bf c}^{n}\big)\right)_{2hw}\right].
\end{multline}
We have now moved the spatial difference operators to one side in the inner products in $\gamma$. Substituting (\ref{eq1}) and (\ref{eq2}) into (\ref{discrete_energy}), we obtain
	\begin{multline}\label{eq3}
	E^{n+1/2} 
	= \left({\bf f}^{n+1} - {\bf f}^n, \left[\frac{1}{\Delta_t^2}\varrho^h + \frac{1}{4}(\mathcal{J}^h)^{-1}\hat{\mathcal{L}}^h + \frac{\Delta_t^2}{48}(\mathcal{J}^h)^{-1}\hat{\mathcal{L}}^h\left((\mathcal{J}_\rho^h)^{-1}\hat{\mathcal{L}}^h\right)\right]\left({\bf f}^{n+1} - {\bf f}^n\right)\right)_{hw}\\
	+\left({\bf c}^{n+1} - {\bf c}^n, \left[\frac{1}{\Delta_t^2}\varrho^{2h}+ \frac{1}{4}(\mathcal{J}^{2h})^{-1}\wt{\mathcal{L}}^{2h} + \frac{\Delta_t^2}{48}(\mathcal{J}^{2h})^{-1}\wt{\mathcal{L}}^{2h}\left((\mathcal{J}_\rho^{2h})^{-1}\wt{\mathcal{L}}^{2h}\right)\right]\left({\bf c}^{n+1} - {\bf c}^n\right)\right)_{2hw}\\
	+ \left({\bf f}^{n+1} + {\bf f}^n, \left[-\frac{1}{4}(\mathcal{J}^h)^{-1}\hat{\mathcal{L}}^h - \frac{\Delta_t^2}{48}(\mathcal{J}^{h})^{-1}\hat{\mathcal{L}}^h\left((\mathcal{J}_\rho^h)^{-1}\hat{\mathcal{L}}^h\right)\right]\left({\bf f}^{n+1} + {\bf f}^n\right)\right)_{hw}\\
	+ \left({\bf c}^{n+1} + {\bf c}^n, \left[-\frac{1}{4}(\mathcal{J}^{2h})^{-1}\wt{\mathcal{L}}^{2h} - \frac{\Delta_t^2}{48}(\mathcal{J}^{2h})^{-1}\wt{\mathcal{L}}^{2h}\left((\mathcal{J}_\rho^{2h})^{-1}\wt{\mathcal{L}}^{2h}\right)\right]\left({\bf c}^{n+1} + {\bf c}^n\right)\right)_{2hw}.
	\end{multline}

Now we rewrite the spatial difference operators in \eqref{eq3} by using the relation \eqref{K}. We note that for any grid functions $\{{\bf u}_{2h} , {\bf u}_h\}$ and $\{{\bf v}_{2h}, {\bf v}_h\}$ satisfying the interface conditions, we have the identity


	\[\left({\bf v}_h, (\mathcal{J}^h)^{-1}\hat{\mathcal{L}}^h{\bf u}_{h}\right)_{hw} + \left({\bf v}_{2h}, (\mathcal{J}^{2h})^{-1}\wt{\mathcal{L}}^{2h}{\bf u}_{2h}\right)_{2hw} 
	= -\mathcal{S}_h({\bf v}_h, {\bf u}_h) -\mathcal{S}_{2h}({\bf v}_{2h}, {\bf u}_{2h}).\]
	Using the relation (\ref{K}), we can easily obtain
	\begin{equation}\label{eq6a}
	\left({\bf v}_h, (\mathcal{J}^h)^{-1}\hat{\mathcal{L}}^h{\bf u}_{h}\right)_{hw} + \left({\bf v}_{2h}, (\mathcal{J}^{2h})^{-1}\wt{\mathcal{L}}^{2h}{\bf u}_{2h}\right)_{2hw} 
	= - \left({\bf u}_h, \varrho^hK_h{\bf v}_h\right)_{hw} - \left({\bf u}_{2h}, \varrho^{2h}K_{2h}{\bf v}_{2h}\right)_{2hw}.
	\end{equation}
	Thus, (\ref{eq3}) can be written as
	\begin{multline}\label{eq6}
	E^{n+1/2} 
	\!=\!\left(\!\delta {\bf f}^n, \left(\!\frac{\varrho^h}{\Delta_t^2} - \frac{\varrho^h}{4}K_h + \frac{\Delta_t^2}{48}\varrho^hK_h^2\!\right)\delta {\bf f}^n\!\right)_{hw}
	\!\!\!+\left(\!\delta {\bf c}^n, \left(\!\frac{\varrho^{2h}}{\Delta_t^2} - \frac{\varrho^{2h}}{4}K_{2h} + \frac{\Delta_t^2}{48}\varrho^{2h}K_{2h}^2\!\right)\delta {\bf c}^n\!\right)_{2hw}\\
	+ \left(\!{\bf f}^{n+1} \!+ {\bf f}^n, \left(\!\frac{\varrho^h}{4}K_h \!-\! \frac{\Delta_t^2}{48}\varrho^hK_h^2\!\right)\!\left({\bf f}^{n+1} \!+ {\bf f}^n\right)\!\right)_{hw}
	\!\!\!+ \left(\!{\bf c}^{n+1} \!+ {\bf c}^n, \left(\!\frac{\varrho^{2h}}{4}K_{2h} \!-\! \frac{\Delta_t^2}{48}\varrho^{2h}K_{2h}^2\!\right)\!\left({\bf c}^{n+1} \!+ {\bf c}^n\right)\!\right)_{2hw},
	\end{multline}
	where we have used 
	\begin{align*}
	&\hspace{0.2cm}\left({\bf f}^{n+1} \!\!-\! {\bf f}^n, (\mathcal{J}^h)^{-1}\hat{\mathcal{L}}^h\big((\mathcal{J}_\rho^h)^{-1}\hat{\mathcal{L}}^h({\bf f}^{n+1} \!\!-\! {\bf f}^n)\big)\right)_{hw} \!\!+ \left({\bf c}^{n+1} \!\!-\! {\bf c}^n, (\mathcal{J}^{2h})^{-1}\wt{\mathcal{L}}^{2h}\big((\mathcal{J}_\rho^{2h})^{-1}\wt{\mathcal{L}}^{2h}({\bf c}^{n+1} \!\!-\! {\bf c}^n)\big)\right)_{2hw} \\
	&= -\left((\mathcal{J}_\rho^h)^{-1}\hat{\mathcal{L}}^h({\bf f}^{n+1} - {\bf f}^n), \varrho^hK_h({\bf f}^{n+1} - {\bf f}^n)\right)_{hw} - \left((\mathcal{J}_\rho^{2h})^{-1}\wt{\mathcal{L}}^{2h}({\bf c}^{n+1} - {\bf c}^n), \varrho^{2h}K_{2h}({\bf c}^{n+1} - {\bf c}^n)\right)_{2hw}\\
	&= -\left(K_h({\bf f}^{n+1} - {\bf f}^n),(\mathcal{J}^h)^{-1}\hat{\mathcal{L}}^h({\bf f}^{n+1} - {\bf f}^n)\right)_{hw} - \left(K_{2h}({\bf c}^{n+1} - {\bf c}^n),(\mathcal{J}^{2h})^{-1}\wt{\mathcal{L}}^{2h}({\bf c}^{n+1} - {\bf c}^n)\right)_{2hw}\\
	&= \left({\bf f}^{n+1} - {\bf f}^n, \varrho^hK^2_h({\bf f}^{n+1} - {\bf f}^n)\right)_{hw} + \left({\bf c}^{n+1} - {\bf c}^n, \varrho^{2h}K^2_{2h}({\bf c}^{n+1} - {\bf c}^n)\right)_{2hw}.
	\end{align*}
	\revii{Here, we have used the equation in (\ref{eq6a}) to derive the second and the third equalities.} To condense the notation, we denote 
	\[{\bf f}_{w} := (\mathcal{W}_\rho^h)^{1/2}{\bf f},\ \  {\bf c}_w := (\mathcal{W}_\rho^{2h})^{1/2}{\bf c},\]
	and use (\ref{kbar}). Finally, (\ref{eq6}) is reduced to
	\begin{multline}
	E^{n+1/2} 
	= \left(\delta {\bf f}^n_w, \left(\frac{1}{\Delta_t^2} - \frac{1}{4}\bar{K}_h + \frac{\Delta_t^2}{48}\bar{K}_h^2\right)\delta {\bf f}^n_w\right)_h
	+\left(\delta {\bf c}^n_w, \left(\frac{1}{\Delta_t^2} - \frac{1}{4}\bar{K}_{2h} + \frac{\Delta_t^2}{48}\bar{K}_{2h}^2\right)\delta {\bf c}^n_w\right)_{2h}\\
	+ \left({\bf f}^{n+1}_w + {\bf f}^n_w, \left(\frac{1}{4}\bar{K}_h- \frac{\Delta_t^2}{48}\bar{K}_{h}^2\right)\left({\bf f}^{n+1}_w + {\bf f}^n_w\right)\right)_h
	+ \left({\bf c}^{n+1}_w + {\bf c}^n_w, \left(\frac{1}{4}\bar{K}_{2h} - \frac{\Delta_t^2}{48}\bar{K}_{2h}^2\right)\left({\bf c}^{n+1}_w + {\bf c}^n_w\right)\right)_{2h},
	\end{multline}
	where we have only standard $L_2$ inner products. 
	Therefore, the fully discrete energy $E^{n+1/2} \geq 0$ if
	\[\frac{1}{\Delta_t^2} - \frac{1}{4}\bar{K}_h + \frac{\Delta_t^2}{48}\bar{K}_h^2 \geq 0,\ \ \  \frac{1}{\Delta_t^2} - \frac{1}{4}\bar{K}_{2h} + \frac{\Delta_t^2}{48}\bar{K}_{2h}^2\geq 0,\ \ \ 
	\frac{1}{4}\bar{K}_h- \frac{\Delta_t^2}{48}\bar{K}_h^2\geq 0,\ \ \ 
	 \frac{1}{4}\bar{K}_{2h} - \frac{\Delta_t^2}{48}\bar{K}_{2h}^2 \geq 0.\]
	Both $\bar{K}_h$ and $\bar{K}_{2h}$ are symmetric positive semi-definite. Therefore, all of their eigenvalues are real and nonnegative, i.e., $\kappa_j^h \geq 0$ and $\kappa_j^{2h} \geq 0$, with $\kappa_j^h$ being the j-th eigenvalue of $\bar{K}_h$ and $\kappa_j^{2h}$ being the j-th eigenvalue of $\bar{K}_{2h}$. By the fact that the eigenvalues of a matrix polynomial equals the polynomial of the eigenvalues of the matrix, 
we need
		\[\frac{1}{\Delta_t^2} - \frac{1}{4}\kappa_j^h + \frac{\Delta_t^2}{48}(\kappa_j^h)^2 \geq 0,\ \ \frac{1}{\Delta_t^2} - \frac{1}{4}\kappa_j^{2h} + \frac{\Delta_t^2}{48}(\kappa_j^{2h})^2\geq 0,\ \ 
		\frac{1}{4}\kappa^h_j- \frac{\Delta_t^2}{48}(\kappa_j^h)^2\geq 0,\ \ 
	\frac{1}{4}\kappa^{2h}_j - \frac{\Delta_t^2}{48}(\kappa^{2h}_j)^2 \geq 0\]
	for all $\kappa_j^h$ and all $\kappa_j^{2h}$ to guarantee $E^{n+1/2} \geq 0$. Following a similar calculation as in \cite{sjogreen2012fourth}, we obtain the time step restriction 
	\begin{equation}\label{dt_energy_positive}
	0 \leq \Delta_t \leq \min\left\{\frac{2\sqrt{3}}{\max_j\sqrt{\kappa_j^h}},
	\frac{2\sqrt{3}}{\max_j\sqrt{\kappa_j^{2h}}}\right\}.
	\end{equation}

\section{Terms in semi-discretization}\label{definitions}
For the first term in (\ref{elastic_semi_c}), we have 
\begin{equation*}
G_1^{2h}(N_{11}^{2h}){\bf c} := \begin{pmatrix}
\left(G_1^{2h}(N_{11}^{2h}){\bf c}\right)_1\\
\left(G_1^{2h}(N_{11}^{2h}){\bf c}\right)_2
\end{pmatrix}, \ \ \left(G_1^{2h}(N_{11}^{2h}){\bf c}\right)_p = \sum_{q =1}^2 G_1^{2h}(N_{11}^{2h}(p,q))c^{(q)},\ p = 1,2,
\end{equation*}
where $G_1^{2h}(N_{11}^{2h}(p,q))c^{(q)}$ is the second derivative SBP operator defined in (\ref{sbp_dxx}) for direction $r$. For the second term in (\ref{elastic_semi_c}), we have
\begin{equation*}
\wt{G}_2^{2h}(N_{22}^{2h}){\bf c} := \begin{pmatrix}
\left(\wt{G}_2^{2h}(N_{22}^{2h}){\bf c}\right)_1\\
\left(\wt{G}_1^{2h}(N_{22}^{2h}){\bf c}\right)_2
\end{pmatrix}, \ \ \left(\wt{G}_1^{2h}(N_{22}^{2h}){\bf c}\right)_p = \sum_{q =1}^2 \wt{G}_2^{2h}(N_{22}^{2h}(p,q))c^{(q)},\ p = 1,2,
\end{equation*}
where $\wt{G}_2^{2h}(N_{22}^{2h}(p,q))c^{(q)}$ is the second derivative SBP operator defined in (\ref{sbp_dxx_gp}) for direction $s$. For the last two terms in (\ref{elastic_semi_c}), we have 
\begin{equation*}
D_l^{2h}(N_{lm}^{2h}D_m^{2h}{\bf c}) := \begin{pmatrix}
\left(D_l^{2h}(N_{lm}^{2h}D_m^{2h}{\bf c})\right)_1\\
\left(D_l^{2h}(N_{lm}^{2h}D_m^{2h}{\bf c})\right)_2
\end{pmatrix}, \ \ \left(D_l^{2h}(N_{lm}^{2h}D_m^{2h}{\bf c})\right)_p = \sum_{q =1}^2 D_l^{2h}(N_{lm}^{2h}(p,q)D_m^{2h}c^{(q)}),\ p = 1,2,
\end{equation*}
where $D_1^{2h}c^{(q)}$ (resp. $D_2^{2h}c^{(q)}$)  is a finite difference operator in direction $r$ (resp. $s$) for the spatial first derivative defined in (\ref{sbp_dx}). For the second term in (\ref{elastic_semi_f}), we have
\begin{equation*}
{G}_2^{h}(N_{22}^{h}){\bf f} := \begin{pmatrix}
\left({G}_2^{h}(N_{22}^{h}){\bf f}\right)_1\\
\left({G}_2^{h}(N_{22}^{h}){\bf f}\right)_2
\end{pmatrix}, \ \ \left({G}_2^{h}(N_{22}^{h}){\bf f}\right)_p = \sum_{q =1}^2 {G}_2^{h}(N_{22}^{h}(p,q))f^{(q)},\ p = 1,2,
\end{equation*}
where $G_2^{h}(N_{22}^{2h}(p,q))f^{(q)}$ is the second derivative SBP operator defined in (\ref{sbp_dxx}) for direction $s$.

For the continuity of traction (\ref{continuous_traction}), we have
\[\wt{\mathcal{A}}_2^{2h}{\bf c} = N_{21}^{2h} D_1^{2h}{\bf c} + N_{22}^{2h}\wt{\mathcal{D}}_{2}^{2h}{\bf c},\]
where
\begin{equation*}
N_{21}^{2h}D_1^{2h}{\bf c} := \begin{pmatrix}
(N_{21}^{2h}D_1^{2h}{\bf c})_1\\
(N_{21}^{2h}D_1^{2h}{\bf c})_2
\end{pmatrix},\ \ (N_{21}^{2h}D_1^{2h}{\bf c})_p = \sum_{q = 1}^2 N_{21}^{2h}(p,q)D_1^{2h}c^{(q)},\ p = 1,2. 
\end{equation*}
And
\begin{equation*}
N_{22}^{2h}\wt{\mathcal{D}}_2^{2h}{\bf c} := \begin{pmatrix}
(N_{22}^{2h}\wt{\mathcal{D}}_2^{2h}{\bf c})_1\\
(N_{22}^{2h}\wt{\mathcal{D}}_2^{2h}{\bf c})_2
\end{pmatrix},\ \ (N_{22}^{2h}\wt{\mathcal{D}}_2^{2h}{\bf c})_p = \sum_{q = 1}^2 N_{22}^{2h}(p,q)\wt{\mathcal{D}}_2^{2h}c^{(q)},\ p = 1,2,
\end{equation*}
where $\wt{\mathcal{D}}_2^{2h}$ is a finite difference operator for the first spatial derivative in direction $s$ defined in (\ref{sbp_dxx_gp}); and 
\[{\mathcal{A}}_2^{h}{\bf f} = N_{21}^{h} D_1^{h}{\bf f} + N_{22}^{h}{\mathcal{D}}_{2}^{h}{\bf f},\]
where $N_{21}^hD_1^h{\bf f}$ is defined similar as $N_{21}^{2h}D_1^{2h}{\bf c}$ and $\mathcal{D}_2^h$ is a finite difference operator for the first spatial derivative in direction $s$ defined in (\ref{sbp_dxx}).

The symmetric positive semi-definite bilinear forms correspond to the fourth order scheme in the stability analysis are given as follows, 
\begin{multline*}
\mathcal{S}_{2h} ({\bf u}, {\bf v}) = \left(D_1^{2h}{\bf u}, N_{11}^{2h}D_1^{2h}{\bf v}\right)_{2hw} + \left(D_1^{2h}{\bf u}, N_{12}^{2h}D_2^{2h}{\bf v}\right)_{2hw} \\+ \left(D_2^{2h}{\bf u}, N_{21}^{2h}D_1^{2h}{\bf v}\right)_{2hw} + \left(D_2^{2h}{\bf u}, N_{22}^{2h}D_2^{2h}{\bf v}\right)_{2hw} 
+ \left({\bf u}, P^{2h}_1(N_{11}^{2h}){\bf v}\right)_{2h} + \left({\bf u}, P^{2h}_2(N_{22}^{2h}){\bf v}\right)_{2h}, 
\end{multline*}
where $P_1^{2h}(N_{11}^{2h})$ (resp. $P_2^{2h}(N_{22}^{2h})$) is a positive semi-definite operator for direction $r$ (resp. $s$) and is small for smooth grid functions but non-zero for odd-even modes, see \cite{sjogreen2012fourth} for details.
	\begin{multline*}
\mathcal{S}_{h} ({\bf u}, {\bf v}) = \left(D_1^{h}{\bf u}, N_{11}^{h}D_1^{h}{\bf v}\right)_{hw} + \left(D_1^{h}{\bf u}, N_{12}^{h}D_2^{h}{\bf v}\right)_{hw} \\+ \left(D_2^{h}{\bf u}, N_{21}^{h}D_1^{h}{\bf v}\right)_{hw} + \left(D_2^{h}{\bf u}, N_{22}^{2h}D_2^{h}{\bf v}\right)_{hw} 
+ \left({\bf u}, P^{h}_1(N_{11}^{h}{\bf v})\right)_{h} + \left({\bf u}, P^{h}_2(N_{22}^{h}{\bf v})\right)_{h}.
\end{multline*}
Here, $P_i^{h}(N_{ii}^{h})$ is defined similar as $P_i^{2h}(N_{ii}^{2h})$ in $\mathcal{S}_{2h}({\bf u}, {\bf v})$.

The symmetric positive semi-definite bilinear forms correspond to the sixth order scheme in the stability analysis are given as follows, 
\[\mathcal{S}_{2h} ({\bf u}, {\bf v}) = \left(D_1^{2h}{\bf u}, N_{11}^{2h}D_1^{2h}{\bf v}\right)_{2hw} \!\!+ \left(D_1^{2h}{\bf u}, N_{12}^{2h}D_2^{2h}{\bf v}\right)_{2hw} \!\!+ \left(D_2^{2h}{\bf u}, N_{21}^{2h}D_1^{2h}{\bf v}\right)_{2hw} \!\!+ \left(D_2^{2h}{\bf u}, N_{22}^{2h}D_2^{2h}{\bf v}\right)_{2hw},\]
and
\[\mathcal{S}_{h} ({\bf u}, {\bf v}) = \left(D_1^{h}{\bf u}, N_{11}^{h}D_1^{h}{\bf v}\right)_{hw} + \left(D_1^{h}{\bf u}, N_{12}^{h}D_2^{h}{\bf v}\right)_{hw} + \left(D_2^{h}{\bf u}, N_{21}^{h}D_1^{h}{\bf v}\right)_{hw} + \left(D_2^{h}{\bf u}, N_{22}^{2h}D_2^{h}{\bf v}\right)_{hw}.\]
We note that the different definitions of $\mathcal{S}_{2h}({\bf u}, {\bf v})$, $\mathcal{S}_h({\bf u}, {\bf v})$ in the fourth order and the sixth order method is because we construct fourth order SBP operators by the way proposed by Sj\"{o}green and Petersson \cite{sjogreen2012fourth} and sixth order SBP operators by the way proposed by Mattsson \cite{Mattsson2012}. But in both cases, $\mathcal{S}_{2h}({\bf u}, {\bf v})$, $\mathcal{S}_h({\bf u}, {\bf v})$ are positive semi-definite.

%
%
%
%
%
%
%
%
%
%
%
%
%
%
%
%

\bibliography{lu}

\begin{thebibliography}{10}

\bibitem{Ahlberg1963}
J.~H. Ahlberg and E.~N. Nilson.
\newblock Convergence properties of the spline fit.
\newblock {\em J. Soc. Indust. Appl. Math.}, 11:95--104, 1963.

\bibitem{Almquist2021}
M.~Almquist and E.~M. Dunham.
\newblock Elastic wave propagation in anisotropic solids using energy-stable
  finite differences with weakly enforced boundary and interface conditions.
\newblock {\em J. Comput. Phys.}, page 109842, 2021.

\bibitem{Almquist2019}
M.~Almquist, S.~Wang, and J.~Werpers.
\newblock Order-preserving interpolation for summation-by-parts operators at
  nonconforming grid interfaces.
\newblock {\em SIAM J. Sci. Comput.}, 41(2):A1201--A1227, 2019.

\bibitem{Carpenter1994}
M.~H. Carpenter, D.~Gottlieb, and S.~Abarbanel.
\newblock Time--stable boundary conditions for finite--difference schemes
  solving hyperbolic systems: methodology and application to high--order
  compact schemes.
\newblock {\em J. Comput. Phys.}, 111:220--236, 1994.

\bibitem{duru2014accurate}
K~Duru, G~Kreiss, and K~Mattsson.
\newblock Stable and high-order accurate boundary treatments for the elastic
  wave equation on second-order form.
\newblock {\em SIAM J. Sci. Comput}, 36(6):A2787--A2818, 2014.

\bibitem{duru2014stable}
Kenneth Duru and Kristoffer Virta.
\newblock Stable and high order accurate difference methods for the elastic
  wave equation in discontinuous media.
\newblock {\em J. Comput. Phys.}, 279:37--62, 2014.

\bibitem{Gassner2013}
G.~J. Gassner.
\newblock A skew--symmetric discontinuous {G}alerkin spectral element
  discretization and its relation to {SBP}--{SAT} finite difference methods.
\newblock {\em SIAM J. Sci. Comput.}, 35:1233--1253, 2013.

\bibitem{Gustafsson1975}
Bertil Gustafsson.
\newblock The convergence rate for difference approximations to mixed initial
  boundary value problems.
\newblock {\em Math. Comp.}, 29:396--406, 1975.

\bibitem{Hagstrom2012}
T.~Hagstrom and G.~Hagstrom.
\newblock Grid stabilization of high--order one--sided differencing {II}:
  second--order wave equations.
\newblock {\em J. Comput. Phys.}, 231:7907--7931, 2012.

\bibitem{Kozdon2016}
J.~E. Kozdon and L.~C. Wilcox.
\newblock Stable coupling of nonconforming, high--order finite difference
  methods.
\newblock {\em SIAM J. Sci. Comput.}, 38:A923--A952, 2016.

\bibitem{Kreiss1972}
H.~O. Kreiss and J.~Oliger.
\newblock {C}omparison of accurate methods for the integration of hyperbolic
  equations.
\newblock {\em Tellus}, 24:199--215, 1972.

\bibitem{Kreiss1974}
H.~O. Kreiss and G.~Scherer.
\newblock Finite element and finite difference methods for hyperbolic partial
  differential equations.
\newblock {\em Mathematical Aspects of Finite Elements in Partial Differential
  Equations, Symposium Proceedings}, pages 195--212, 1974.

\bibitem{kreiss2012boundary}
Heinz~Otto Kreiss and N~Anders Petersson.
\newblock Boundary estimates for the elastic wave equation in almost
  incompressible materials.
\newblock {\em SIAM Journal on Numerical Analysis}, 50(3):1556--1580, 2012.

\bibitem{Lundquist2018}
T.~Lundquist, A.~Malan, and J.~Nordstr\"om.
\newblock A hybrid framework for coupling arbitrary summation-by-parts schemes
  on general meshes.
\newblock {\em J. Comput. Phys.}, 362:49--68, 2018.

\bibitem{Mattsson2012}
K.~Mattsson.
\newblock Summation by parts operators for finite difference approximations of
  second--derivatives with variable coefficient.
\newblock {\em J. Sci. Comput.}, 51:650--682, 2012.

\bibitem{Mattsson2010}
K.~Mattsson and M.~H. Carpenter.
\newblock Stable and accurate interpolation operators for high--order
  multiblock finite difference methods.
\newblock {\em SIAM J. Sci. Comput.}, 32:2298--2320, 2010.

\bibitem{petersson2015wave}
N~Anders Petersson and Bj{\"o}rn Sj{\"o}green.
\newblock Wave propagation in anisotropic elastic materials and curvilinear
  coordinates using a summation-by-parts finite difference method.
\newblock {\em J. Comput. Phys.}, 299:820--841, 2015.

\bibitem{sjogreen2012fourth}
B.~Sj{\"o}green and N.~A. Petersson.
\newblock A fourth order accurate finite difference scheme for the elastic wave
  equation in second order formulation.
\newblock {\em J. Sci. Comput.}, 52(1):17--48, 2012.

\bibitem{Sjogreen2014}
B.~Sj\"{o}green and N.~A. Petersson.
\newblock Source estimation by full wave form inversion.
\newblock {\em J. Sci. Comput.}, 59:247--276, 2014.

\bibitem{Svard2019}
Magnus Sv\"{a}rd and Jan Nordstr\"{o}m.
\newblock On the convergence rates of energy-stable finite-difference schemes.
\newblock {\em J. Comp. Phys.}, 397:108819, 2019.

\bibitem{Wang2017}
S.~Wang and G.~Kreiss.
\newblock Convergence of summation--by--parts finite difference methods for the
  wave equation.
\newblock {\em J. Sci. Comput.}, 71:219--245, 2017.

\bibitem{Wang2018b}
S.~Wang, A.~Nissen, and G.~Kreiss.
\newblock Convergence of finite difference methods for the wave equation in two
  space dimensions.
\newblock {\em Math. Comp.}, 87:2737--2763, 2018.

\bibitem{Wang2016}
S.~Wang, K.~Virta, and G.~Kreiss.
\newblock {H}igh order finite difference methods for the wave equation with
  non--conforming grid interfaces.
\newblock {\em J. Sci. Comput.}, 68:1002--1028, 2016.

\bibitem{wang2018fourth}
Siyang Wang and N~Anders Petersson.
\newblock Fourth order finite difference methods for the wave equation with
  mesh refinement interfaces.
\newblock {\em SIAM J. Sci. Comput.}, 41:A3246--A3275, 2019.

\bibitem{Zhang2021}
L.~Zhang, S.~Wang, and N.~A. Petersson.
\newblock Elastic wave propagation in curvilinear coordinates with mesh
  refinement interfaces by a fourth order finite difference method.
\newblock {\em SIAM J. Sci. Comput.}, 43:A1472--A1496, 2021.

\end{thebibliography}
\bibliographystyle{plain}

\end{document}